\newif\ifpoly
\numberwithin{equation}{section}
\theoremstyle{plain}
\newtheorem{theorem}[equation]{Theorem}
\newtheorem{lemma}[equation]{Lemma}
\newtheorem{corollary}[equation]{Corollary}
\newtheorem{proposition}[equation]{Proposition}
\newtheorem{addendum}[equation]{Addendum}
\theoremstyle{definition}
\newtheorem{definition}[equation]{Definition}
\newtheorem{convention}[equation]{Convention}
\newtheorem{nexample}[equation]{Example}
\newtheorem*{example}{Example}
\newcommand{\heading}[1]{\setcounter{subsection}{\value{equation}}\stepcounter{equation}\subsection{#1}}
\theoremstyle{remark}
\newtheorem*{remark}{Remark}
\newcommand{\comma}{{},\,}
\newcommand{\col}{\colon}
\newcommand{\bbZ}{\mathbb Z}
\newcommand\Z{\bbZ}
\newcommand{\bbR}{\mathbb R}
\newcommand{\mcA}{\mathcal A}
\newcommand{\mcB}{\mathcal B}
\newcommand{\mcC}{\mathcal C}
\newcommand{\mcD}{\mathcal D}
\newcommand{\mcJ}{\mathcal J}
\newcommand{\mcS}{\mathcal S}
\newcommand{\sfP}{\mathsfsl{P}}
\newcommand{\sfQ}{\mathsfsl{Q}}
\DeclareMathAlphabet{\mathsfsl}{OT1}{cmss}{m}{sl}
\newcommand{\Map}{\mathsfsl{Map}}
\newcommand{\Pair}{{\mathsfsl{Pair}}}
\newcommand{\SSet}{\mathsfsl{SSet}}
\newcommand{\Alg}{\mathsfsl{Alg}}
\newcommand{\MPS}[1]{\mathsfsl{MPS}_{#1}}
\newcommand{\EMPS}[1]{\mathsfsl{EMPS}_{#1}}
\newcommand{\PMPS}[2][]{\ifthenelse{\equal{#1}{}}{\mathsfsl{PMPS}_{#2}}{\mathsfsl{PMPS}_{#2,#1}}}
\newcommand{\HMPS}[2][]{\ifthenelse{\equal{#1}{}}{\mathsfsl{HMPS}_{#2}}{\mathsfsl{HMPS}_{#2,#1}}}
\newcommand{\xra}[1]{\xrightarrow{#1}}
\newcommand{\ra}{\rightarrow}
\newcommand{\la}{\leftarrow}
\newcommand{\xlra}[1]{\xrightarrow{\ #1\ }}
\newcommand{\xlla}[1]{\xleftarrow{\ #1\ }}
\newcommand{\lra}{\longrightarrow}
\newcommand{\Ra}{\xRightarrow{\ \ }}
\newcommand{\La}{\xLeftarrow{\ \ }}
\newcommand{\LRa}{\xLeftrightarrow{\ \ \ }}
\newcommand{\pbsize}{15pt}
\newcommand{\pboffset}{.5}
\newcommand{\xycorner}[3]{\save #2="a";#1;"a"**{}?(\pboffset);"a"**\dir{-};#3;"a"**{}?(\pboffset);"a"**\dir{-}\restore}
\newcommand{\pb}{\xycorner{[]+<\pbsize,0pt>}{[]+<\pbsize,-\pbsize>}{[]+<0pt,-\pbsize>}}
\newcommand{\xymatrixc}[1]{\xy *!C\xybox{\xymatrix{#1}}\endxy}
\newcommand{\cof}[1][]{\mathbin{\:\!\!\xymatrix@1@C=15pt{{}\ar@{ >->}[r]^{#1} & {}}}}
\newcommand{\fib}[1][]{\mathbin{\:\!\!\xymatrix@1@C=15pt{{}\ar@{->>}[r]^{#1} & {}}}}
\newcommand{\family}[1][]{\mathbin{\:\!\!\xymatrix@1@C=15pt{{}\ar@{~>}[r]^{#1} & {}}}}
\newcommand{\pto}[1][]{\mathbin{\:\!\!\xymatrix@1@C=20pt{{}\ar@{-->}[r]^{#1} & {}}}} 
\newcommand{\coker}{\mathop\mathrm{coker}\nolimits}
\newcommand{\cone}{\mathop\mathrm{cone}\nolimits}
\newcommand{\cyl}{\mathop\mathrm{cyl}\nolimits}
\newcommand{\defeq}{\stackrel{\mathrm{def}}{=}}
\newcommand{\ef}{\mathrm{ef}}
\newcommand{\ev}{\mathop\mathrm{ev}\nolimits}
\newcommand{\id}{\mathop\mathrm{id}\nolimits}
\newcommand{\im}{\mathop\mathrm{im}\nolimits}
\newcommand{\map}{\mathop\mathrm{map}\nolimits}
\newcommand{\op}{\mathrm{op}}
\newcommand{\pr}{\mathop\mathrm{pr}\nolimits}
\newcommand{\size}{\mathop\mathrm{size}\nolimits}
\newcommand{\sk}{\mathop\mathrm{sk}\nolimits}
\newlength{\hlp}
\newcommand{\leftbox}[2]{\settowidth{\hlp}{$#1$}\makebox[\hlp][l]{${#1}{#2}$}}
\newcommand{\rightbox}[2]{\settowidth{\hlp}{$#2$}\makebox[\hlp][r]{${#1}{#2}$}}
\newcommand{\Hopf}{H-}
\newcommand{\bdry}{d}
\newcommand{\ZG}{{\bbZ G}}
\newcommand{\hnabla}{\mathop{\widehat\nabla}}
\newcommand{\hDelta}{\mathop{\widehat\Delta}}
\newcommand{\htimes}{\mathbin{\widehat\times}}
\newcommand{\hvee}{\mathbin{\widehat\vee}}
\newcommand{\add}{\mathop\mathrm{add}\nolimits}
\newcommand{\inv}{\mathop\mathrm{inv}\nolimits}
\newcommand{\hadd}{\mathop\mathrm{add'}\nolimits}
\newcommand{\hplus}{\mathbin{+'}}
\newcommand{\theconn}{{d}}
\newcommand{\thedim}{{n}}
\newcommand{\theDim}{{n_0}}
\newcommand{\theotherdim}{{m}}
\newcommand{\thedimm}{{i}}
\newcommand{\thedimmm}{{j}}
\newcommand{\stdsimp}[1]{\Delta^{#1}}
\newcommand{\horn}[2]{%
\mbox{$\xy
<0pt,-\the\fontdimen22\textfont2>;p+<.1em,0em>:
{\ar@{-}(0,0.1);(3,7)},
{\ar@{-}(3,7);(6,0.1)},
{\ar@{-}(3.2,7);(6.2,0.1)},
{\ar@{-}(3.4,7);(6.4,0.1)}
\endxy\;\!{}^{#1}_{#2}$}}
\newcommand{\vertex}[1]{#1}
\newcommand{\Pnew}{{P_\thedim}}
\newcommand{\Pold}{{P_{\thedim-1}}}
\newcommand{\Polder}{{P_{\thedimm}}}
\newcommand{\Pm}{{P_\theotherdim}}
\newcommand{\tPnew}{{\widetilde P_\thedim}}
\newcommand{\tPold}{{\widetilde P_{\thedim-1}}}
\newcommand{\tB}{{\widetilde B}}
\newcommand{\pin}{{\pi_\thedim}}
\newcommand{\onew}{{o_\thedim}}
\newcommand{\oold}{{o_{\thedim-1}}}
\newcommand{\ommo}{{o_{\theotherdim-1}}}
\newcommand{\om}{{o_\theotherdim}}
\newcommand{\Kn}{{K_{\thedim+1}}}
\newcommand{\En}{{E_\thedim}}
\newcommand{\Ln}{{L_\thedim}}
\newcommand{\Li}{{L_\thedimm}}
\newcommand{\kn}{{k_\thedim}}
\newcommand{\knp}{{k_\thedim'}}
\newcommand{\kip}{{k_\thedimm'}}
\newcommand{\knst}{{k_{\thedim*}}}
\newcommand{\knstinv}{{k_{\thedim*}^{-1}}}
\newcommand{\kmst}{{k_{\theotherdim*}}}
\newcommand{\konepef}{{\kappa_1^\ef}}
\newcommand{\knpef}{{\kappa_\thedim^\ef}}
\newcommand{\kipef}{{\kappa_\thedimm^\ef}}
\newcommand{\pn}{{p_\thedim}}
\newcommand{\pnst}{{p_{\thedim*}}}
\newcommand{\qnp}{{q_\thedim'}}
\newcommand{\qn}{{q_\thedim}}
\newcommand{\jn}{{j_\thedim}}
\newcommand{\jnst}{{j_{\thedim*}}}
\newcommand{\kno}{{\kn\oold}}
\newcommand{\qno}{{\qn\onew}}
\newcommand{\fn}{{f_\thedim}}
\newcommand{\varphin}{{\varphi_\thedim}}
\newcommand{\varphinp}{{\varphi_\thedim'}}
\newcommand{\varphinst}{{\varphi_{\thedim*}}}
\newcommand{\varphinpst}{{\varphi_{\thedim*}'}}
\newcommand{\tfn}{{\widetilde f_\thedim}}
\newcommand{\psin}{{\psi_\thedim}}
\newcommand{\psinst}{{\psi_{\thedim*}}}
\newcommand{\psiold}{{\psi_{\thedim-1}}}
\newcommand{\tpsin}{{\widetilde\psi_\thedim}}
\newcommand{\tonew}{{\widetilde o_\thedim}}
\newcommand{\connn}{{\partial_\thedim}}
\newcommand{\conni}{{\partial_\thedimm}}
\titleformat{\section}[block]
{\normalfont\Large\filcenter\bfseries}{\thesection.}{.33em}{}
\titleformat{\subsection}[runin]
{\normalfont\normalsize\bfseries}{\thesubsection.}{.33em}{}[.]
\def\immediateFigure#1{%
\smallskip\begin{center}#1\end{center}\smallskip }
\newcommand{\immfig}[1]  
{\immediateFigure{\mbox{\includegraphics{#1}}}}
\title
{Algorithmic solvability of the lifting-extension problem\thanks{%
The research of M.\ \v{C}.\ was supported by the project CZ.1.07/2.3.00/20.0003 of the Operational Programme Education for Competitiveness of the Ministry of Education, Youth and Sports of the Czech Republic. The research by M.\ K.\  was supported by the Center of Excellence -- Inst.\ for Theor.\ Comput.\ Sci., Prague (project P202/12/G061 of GA~\v{C}R) and by the Project LL1201 ERCCZ CORES. The research of L.\ V.\ was supported by the Center of Excellence -- Eduard \v{C}ech Institute (project P201/12/G028 of GA~\v{C}R).
\vskip .5ex \noindent
\emph{2010 Mathematics Subject Classification}. Primary 55Q05; Secondary 55S91.
\vskip .5ex \noindent
\emph{Key words and phrases}. Homotopy classes, equivariant, fibrewise, lifting-extension problem, algorithmic computation, embeddability, Moore--Postnikov tower.
}}
\author
{Martin \v{C}adek \and Marek Kr\v{c}\'al \and Luk\'a\v{s} Vok\v{r}\'{\i}nek}
\begin{document}

\maketitle

\begin{abstract}
Let $X$ and $Y$ be finite simplicial sets (e.g.\ finite simplicial complexes), both equipped with a free simplicial action of a finite group $G$. Assuming that $Y$ is $d$-connected and $\dim X\le 2d$, for some $d\ge 1$, we provide an algorithm that computes the set of all equivariant homotopy classes of equivariant continuous maps $|X|\to|Y|$; the existence of such a map can be decided even for $\dim X\leq 2d+1$.
\ifpoly
For fixed $G$ and $\theconn$, the algorithm runs in polynomial time.
\fi
This yields the first algorithm for deciding topological embeddability of a $k$-dimensional finite simplicial complex into $\bbR^n$ under the condition $k\leq\frac 23 n-1$.

More generally, we present an algorithm that, given a lifting-extension problem satisfying an appropriate stability assumption, computes the set of all homotopy classes of solutions. This result is new even in the non-equivariant situation.
\end{abstract}

\section{Introduction}

Our original goal for this paper was to design an algorithm that decides existence of an equivariant map between given spaces under a certain ``stability'' assumption. To explain our solution however, it is more natural to deal with a more general lifting-extension problem. At the same time, lifting-extension problems play a fundamental role in algebraic topology since many problems can be expressed as their instances. We start by explaining our original problem and its concrete applications and then proceed to the main object of our study in this paper -- the lifting-extension problem.

\subsection*{Equivariant maps}
Consider the following
algorithmic problem: given a finite group $G$ and 
two free $G$-spaces $X$ and $Y$, decide the existence of 
an equivariant map $f\col X\to Y$.

In the particular case $G=\Z/2$ and $Y=S^{n-1}$ equipped with the antipodal $\Z/2$-action, this problem has various applications in geometry and combinatorics.

Concretely, it is well-known that if a simplicial complex $K$ embeds into $\bbR^{\thedim}$ then there exists a $\Z/2$-equivariant map $(K \times K) \smallsetminus \Delta_K \to S^{n-1}$; the converse holds in the so-called \emph{metastable range} $\dim K \leq \tfrac{2}{3} \thedim-1$ by \cite{Weber:PlongementsPolyedresDomaineMetastable-1967}. Algorithmic aspects of the problem of embeddability of $K$ into $\bbR^{\thedim}$ were studied in \cite{MatousekTancerWagner:HardnessEmbeddings-2011} and, with the exception of low dimensions, the meta-stable range was the only remaining case left open.
\ifpoly
Theorem~\ref{t:emb-metast} below shows that, for fixed $n$, it is solvable in polynomial time.
\else
Theorem~\ref{t:emb-metast} below shows that it is solvable.
\fi

Equivariant maps also provide interesting applications of topology to combinatorics. For example, the celebrated result of Lov\'asz on Kneser's conjecture states that for a graph $G$, the absence of a $\Z/2$-equivariant map $B(G) \to S^{n-1}$ imposes a lower bound $\chi(G) \geq n+2$ on the chromatic number of $G$, where $B(G)$ is a certain simplicial complex constructed from $G$, see \cite{Mat-top}.

Building on the work of Brown \cite{Brown}, which is not applicable for $Y=S^{\thedim-1}$, we investigated in papers \cite{CKMSVW11,polypost} the
simpler, non-equivariant situation, where $X$ and $Y$ were topological
spaces and we were interested in $[X,Y]$, the set of all homotopy
classes of continuous maps $X\to Y$. Employing methods of \emph{effective homology}
developed by Sergeraert et al.\ (see e.g.\ \cite{SergerGenova}),
we showed that for any fixed $d\ge 1$, $[X,Y]$ is polynomial-time computable
if $Y$ is $d$-connected and $\dim X\le 2d$.\footnote{%
	An extension of \cite{CKMSVW11} to the case of a simply connected $Y$ whose non-stable homotopy groups, i.e.\ the groups $\pin(Y)$ for $\thedim > 2 \theconn$, are finite (e.g.\ an odd-dimnsional sphere) that works for $X$ of arbitrary dimension can be found in~\cite{odd-spheres}.
} In contrast, \cite{ext-hard} shows that the problem of computing $[X,Y]$ is \#P-hard when the dimension restriction on $X$ is dropped. More strikingly, a related problem of the existence of a continuous extension of a given map $A \to Y$, defined on a subspace $A$ of $X$, is \emph{undecidable} as soon as $\dim X \geq 2d+2$.

Here we obtain an extension of the above computability result for free $G$-spaces and equivariant maps. The input $G$-spaces $X$ and $Y$ can be given as finite \emph{simplicial sets} (generalizations of finite simplicial complexes, see \cite{Friedm08}), and the free action of $G$ is assumed simplicial. The simplicial sets and the $G$-actions on them are described by a finite table.

\begin{theorem}\label{theorem:equivariant}
Let $G$ be a finite group. There is an algorithm that, given finite simplicial sets $X$ and $Y$ with free simplicial actions of $G$, such that $Y$ is $d$-connected, $d\geq 1$, and $\dim X\leq 2d+1$, decides the existence of a continuous equivariant map $X\to Y$.

If such a map  exists and $\dim X\leq 2d$, then the set $[X,Y]$ of all equivariant homotopy classes of equivariant continuous maps can be equipped with the structure of a finitely generated abelian group, and the algorithm outputs the isomorphism type of this group.
\ifpoly

For fixed $G$ and $\theconn$, this algorithm runs in polynomial time.
\fi
\end{theorem}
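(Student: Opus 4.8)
The plan is to reduce the equivariant problem to a lifting-extension problem in the stable range and then invoke the general algorithm of this paper. Since $G$ acts freely on $X$, the diagonal $G$-action on $X\times Y$ is free, so the quotient $X\times_G Y=(X\times Y)/G$ is a finite simplicial set, effectively computable from the input tables, and the projection $q\col X\times_G Y\to X/G$ is a fibration with fibre $Y$ (a twisted Cartesian product over the covering $X\to X/G$, which we may take to be a Kan fibration). The classical dictionary identifying sections of an associated bundle with equivariant maps out of the total space of the corresponding principal bundle gives a bijection $\Gamma(q)\cong\Map^G(X,Y)$, and running the same argument over $X\times I$ identifies vertical homotopy classes of sections of $q$ with equivariant homotopy classes of equivariant maps $X\to Y$. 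Thus $[X,Y]^G$ is precisely the set of homotopy classes of solutions of the lifting-extension problem with data $\emptyset\hookrightarrow X/G\xra{\id}X/G$ and $q\col X\times_G Y\to X/G$. (Equivalently one may reduce to lifting the classifying map $X/G\to BG$ of the principal bundle $X\to X/G$ along the Borel fibration $Y\times_G EG\to BG$, with $EG$ replaced by a finite, sufficiently highly connected free $G$-simplicial set; this version makes the complexity bound below especially transparent.)

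This instance satisfies the stability hypothesis of our general theorem: the base $X/G$ has dimension $\dim X\le 2d$ (respectively $\le 2d+1$), while the fibre $Y$ of $q$ is $d$-connected. Applying the general lifting-extension algorithm therefore decides, for $\dim X\le 2d+1$, whether a section of $q$ — equivalently, an equivariant map $X\to Y$ — exists; and when a section exists and $\dim X\le 2d$ it outputs the set of vertical homotopy classes of sections together with a natural finitely generated abelian group structure. Transporting this along the dictionary of the previous paragraph yields both assertions of the theorem. The one point requiring care is that $X/G$ is in general not simply connected, so $\pi_1(X/G)$ acts nontrivially on the homotopy groups of the fibre $Y$ through the epimorphism $\pi_1(X/G)\twoheadrightarrow G$ (the relevant local system being pulled back from $BG$, since $G$ is finite and $Y$ is simply connected); the general theorem is designed to handle such twisting via Moore--Postnikov towers with local coefficients, so nothing extra need be verified here.

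For the abelian group structure, in the range $\dim X\le 2d$ this is the fibrewise counterpart of the classical fact that $[X,Y]$ is an abelian group whenever $Y$ is $d$-connected and $\dim X\le 2d$ (by Freudenthal, via double suspension), and it is produced and computed by the general algorithm. For the polynomial-time claim with $G$ and $d$ fixed: then $\dim X\le 2d+1$ is bounded, the simplicial sets $X/G$ and $X\times_G Y$ have sizes $O(|X|)$ and $O(|X|\cdot|Y|)$, and the general lifting-extension algorithm runs in time polynomial in its input size for a fixed value of the stability parameter; chaining these bounds gives a polynomial-time algorithm. The substantial work is thus entirely in the general lifting-extension theorem, not in this reduction: the main obstacle will be to make the Moore--Postnikov tower of $q$ and its twisted-coefficient obstruction theory effective, and to exhibit and compute the abelian group structure on the set of homotopy classes of solutions throughout the stable range $\dim\le 2d$.
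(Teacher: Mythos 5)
Your reduction --- equivariant maps $X\to Y$ correspond to sections of the associated bundle $X\times_G Y\to X/G$, and equivariant homotopy classes to vertical homotopy classes of sections --- is classically correct, and it is precisely the quotient of the reduction the paper actually uses. But the step ``apply the general lifting-extension algorithm'' does not go through as you have set it up, and the sentence ``the general theorem is designed to handle such twisting via Moore--Postnikov towers with local coefficients, so nothing extra need be verified here'' is exactly where the argument breaks. Theorem~\ref{thm:main_theorem} requires all four spaces to carry \emph{free} $G$-actions and, crucially, requires $B$ and $Y$ to be $1$-connected; the Moore--Postnikov tower, its effective homology, and the obstruction groups are all built under that hypothesis. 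Your instance has base $X/G$ (or $BG$), which is not simply connected, and carries no group action at all. The paper never develops an obstruction theory with local coefficients over a non-simply-connected base; instead it encodes the $\pi_1$-twisting by staying equivariant \emph{upstairs}: it takes $B=EG$ (a contractible, hence $1$-connected, free $G$-space, infinite but with trivial effective homology), sets $\varphi=c_Y\colon Y\to EG$ and $g=c_X\colon X\to EG$, and proves $[X,Y]\cong[X,Y']^{\emptyset}_{EG}$. The equivariant cohomology $H^*_G(X,A;\pi)$ appearing in the algorithm is of course isomorphic to the local-coefficient cohomology of $X/G$ you have in mind, but the algorithm is not formulated on the quotient, so your instance simply does not satisfy the hypotheses of the theorem you want to invoke. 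To repair the argument you must present the section problem equivariantly over a $1$-connected free $G$-base --- which is the paper's proof.

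Two smaller points. First, $X\times_G Y\to X/G$ is a twisted Cartesian product but not a Kan fibration unless $Y$ is Kan, which a finite $Y$ essentially never is; so the dictionary between sections and equivariant maps must be run through the fibrant replacement $Y'$, and one then still has to check that $[X,Y']^{\emptyset}_{EG}\to[X,Y']\cong[X,Y]$ is a bijection. The paper does this explicitly: surjectivity because any map $X\to Y'$ makes the lower triangle commute up to homotopy (all equivariant maps to $EG$ are equivariantly homotopic) and $Y'\fib EG$ has the homotopy lifting property; injectivity because homotopies into $EG$ are essentially unique, so every homotopy of diagonals is automatically vertical. Second, the paper's statement also promises an algorithm identifying a given equivariant simplicial map $\ell\colon X\to Y$ as an element of the computed group (by lifting a homotopy $\varphi\ell\sim g$ into the Moore--Postnikov stage); your proposal does not address this, though it is a minor addition once the main reduction is in place.
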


The isomorphism type is output as an abstract abelian group given by a (finite) number of generators and relations. Furthermore, there is an algorithm that, given an equivariant simplicial map $\ell\col X\to Y$, computes the element of this group that $\ell$ represents. In the opposite direction, although every homotopy class can be represented by a simplicial map $X' \to Y$ for some \emph{subdivision} $X'$ of $X$, we do not know of effective means of producing such representatives.\footnote{%
	It is possible, for a given homotopy class $z \in [X, Y]$, to go through all subdivisions $X'$ and all possible simplicial maps $X' \to Y$ and test if they represent $z$. However, such a procedure does not seem to be very effective.
}

As a consequence, we also have an algorithm that, given two equivariant simplicial maps $X\to Y$,
tests whether they are equivariantly homotopic under the above dimension restrictions on $X$.
Building on the methods of the present paper, \cite{Filakovsky:suspension} removes the
dimension restriction for the latter question: it provides a homotopy-testing algorithm
assuming only that $Y$ is simply connected.

A work in progress has a goal to extend the results of the present paper to non-free $G$-actions; for this extension, it seems necessary to work with diagrams of fixed points of various subgroups $H \leq G$ and maps between them, while free actions allow to work with a single space (namely, the fixed points of the trivial subgroup).

\subsection*{Lifting-extension problem}
We obtain Theorem~\ref{theorem:equivariant} by an inductive approach that works more generally and more naturally in the setting of the \emph{(equivariant) lifting-extension problem}, summarized in the following diagram:
\begin{equation}\label{eq:basic_square}
\xy *!C\xybox{\xymatrix@C=40pt{
A \ar[r]^-{f} \ar@{ >->}[d]_-\iota & Y \ar@{->>}[d]^-\psi \\
X  \ar[r]_-{g} \ar@{-->}[ru]^-{\ell
} & B
}}\endxy
\end{equation}
The input objects for this problem are the solid part of the diagram and we require that:
\begin{enumerate}[labelindent=.5em,leftmargin=*,label=$\bullet$,itemsep=0pt,parsep=0pt,topsep=5pt]
\item $A$, $X$, $Y$, $B$ are free $G$-spaces;
\item $f\col A\to Y$ and $g\col X\to B$ are equivariant maps;
\item $\iota\col A\cof X$ is an equivariant \emph{cofibration} (simplicially: an inclusion);
\item $\psi\col  Y\fib B$ is an equivariant \emph{fibration} (simplicially: a Kan fibration, see \cite{May:SimplicialObjects-1992}); and
\item the square commutes (i.e.\ $g\iota=\psi f$).
\end{enumerate}

The lifting-extension problem asks whether there exists a \emph{diagonal} in the square, i.e.\ an equivariant map
$\ell\col X\to Y$, marked by the dashed arrow, that makes both
triangles commute. We call such an $\ell$ a
\emph{solution} of the lifting-extension
problem~\eqref{eq:basic_square}.

Moreover, if such an $\ell$ exists, we would like to compute the set $[X,Y]^A_B$ of all solutions up to equivariant \emph{fibrewise} homotopy \emph{relative to $A$}.\footnote{%
	A homotopy $h\col [0,1]\times X\to Y$ is \emph{fibrewise} if $\psi(h(t,x))=g(x)$ for all $t\in[0,1]$ and $x\in X$. It is \emph{relative to $A$} if, for $a\in A$, $h(t,a)$ is independent of $t$, i.e.\ $h(t,a)=f(a)$ for all $t\in [0,1]$ and $a\in A$.
}
More concretely, in the cases covered by our algorithmic results, we will be able to equip $[X,Y]^A_B$ with a structure of an abelian group, and the algorithm computes the isomorphism type of this group. To be more precise, this structure is only canonical up to a choice of zero, with various choices differing by translations, so that $[X,Y]_B^A$ really has an ``affine'' nature (in very much the same way as an affine space is naturally a vector space up to a choice of its origin). For an abstract point of view, see~\cite{heaps1}.

\subsection*{Generalized lifting-extension problem}
Spaces appearing in a fibration $\psi \col Y\fib B$ must typically be represented
by infinite simplicial sets%
\footnote{%
	If $\psi$ is a Kan fibration between finite simply connected simplicial sets then its fibre is a finite Kan complex and it is easy to see that it then must be discrete. Consequently, $\psi$ is a covering map between simply connected spaces and thus an isomorphism.}%
, and their representation
as inputs to an algorithm can be problematic. For this reason,
we will consider a \emph{generalized lifting-extension problem},
where, compared to the above, $\psi\col Y\to B$ can be an arbitrary equivariant map,
\emph{not} necessarily a fibration.

In this case, it makes no sense from the homotopy point of view to define a solution as a map $X\to Y$ making both triangles commutative. A homotopically correct definition of a solution is as a pair $(\ell,h)$, where $\ell\col  X\to Y$ is a map for which the upper triangle commutes strictly and the lower one commutes up to the specified homotopy $h\col [0,1]\times X\to B$ relative to $A$. We will not pursue this approach any further (in particular, we will not define the right notion of homotopy of such pairs) and choose an equivalent, technically less demanding alternative, which consists in replacing the map $\psi$ by a homotopy equivalent fibration.

To this end, we factor $\psi\col  Y\to B$ as
a weak homotopy equivalence $j\col  Y\xra\sim Y'$ followed by
a fibration $\psi'\col Y'\fib B$ (in the simplicial setup, see Lemma~\ref{l:fibrant_replacement}).
We define a \emph{solution} of the
considered generalized lifting-extension problem to be a solution
$\ell'\col X\to Y'$ of the lifting-extension problem
\[\xymatrixc{
A \ar[r]^-{f} \ar@{ >->}[d]_-\iota & Y \ar[r]^-j & Y' \ar@{->>}[d]^-{\psi'} \\
X  \ar[rr]_-{g} \ar@{-->}[rru]^-{\ell'} & & B
}\]
If $\psi$ was a fibration to begin with, we naturally take $Y=Y'$ and
$j=\id$, and then the two notions of a solution coincide.
With some abuse of notation, we write $[X,Y]_B^A$ for the set
$[X,Y']_B^A$ of all homotopy classes of solutions of the above lifting-extension problem.
Clearly, for every diagonal $\ell \colon X \to Y$ (i.e.\ a map $\ell$ satisfying $f = \ell \iota$ and $g = \psi \ell$), the composition $\ell' = j \ell$ is a solution and, in this way, $\ell$ represents a homotopy class in $[X, Y]^A_B$. On the other hand, not every homotopy class is represented by a diagonal $\ell \colon X \to Y$.

We remark that $Y'$ is used merely as a theoretical tool -- for actual computations, we use a different approximation of $Y$,
namely a suitable finite stage of a Moore--Postnikov tower
for $\psi\col Y\to B$; see Section~\ref{s:main_proofs}. Moreover, $Y'$
is not determined uniquely, and thus neither are the solutions of the generalized lifting-extension problem.
However, rather standard considerations show that the \emph{existence} of a solution and the \emph{isomorphism type}
of $[X,Y']_B^A$ as an abelian group are independent of the choice of $Y'$.

\subsection*{Examples of lifting-extension problems}
In order to understand
the meaning of the (generalized) lifting-extension problem,
it is instructive to consider some special
cases.

\begin{enumerate}[labelindent=.5em,leftmargin=*,label=(\roman*)]
\item (Classification of extensions.)
First, consider $G=\{e\}$ trivial (thus, the equivariance conditions
are vacuous) and $B$ a point (which makes the lower triangle in the
lifting-extension problem superfluous).
Then we have an \emph{extension problem},
asking for the existence of a map $\ell\col X\to Y$ extending a given $f\col A\to Y$. We recall that this problem is undecidable when $\dim X$ is not bounded, according to~\cite{ext-hard}. Moreover, $[X,Y]^A$ is the set of appropriate homotopy classes of such extensions.%
\footnote{
	The problem of computing homotopy classes of solutions (under our usual condition on the dimension of $X$) was considered in \cite{polypost}, but with a different equivalence relation on the set of all extensions: \cite{polypost} dealt with the (slightly unnatural) \emph{coarse classification}, where two extensions $\ell_0$ and $\ell_1$ are considered equivalent if they are homotopic as maps $X\to Y$, whereas here we deal with the \emph{fine classification}, where the equivalence of $\ell_0$ and $\ell_1$ means that they are homotopic relative to~$A$.
}

\item (Equivariant maps.) Consider $G$ finite, $A=\emptyset$, and $B=EG$,
a contractible free $G$-space (it is unique up to equivariant homotopy equivalence).
For every free $G$-space $Z$, there is an equivariant map $c_Z\col Z\to EG$,
unique up to equivariant homotopy.
If we set $g=c_X$ and $\psi=c_Y$
in the generalized lifting-extension problem, it can be proved
that $[X,Y]_{EG}^\emptyset$ is in a bijective correspondence with
equivariant maps $X\to Y$ up to equivariant homotopy.
This is how we obtain
Theorem~\ref{theorem:equivariant}.\footnote{Note that we cannot simply
take $B$ to be a point in the lifting-extension problem with a nontrivial $G$,
since there is no free action of $G$ on a point. Actually, $EG$
serves as an equivariant analogue of a point among free $G$-spaces.}

\item (Extending sections in a vector bundle.)
Let $G=\{e\}$, and let $\psi\col Y\to B$ be the inclusion
${BSO}(\thedim-k)\ra{BSO}(\thedim)$,
where $BSO(\thedim)$ is the classifying space of the special orthogonal group $SO(\thedim)$.
Then the commutative square in the
generalized lifting-extension
problem is essentially an oriented vector bundle of dimension $\thedim$ over $X$
together with $k$ linearly independent vector fields over $A$.
The existence of a solution is then equivalent to the existence of
linearly independent continuations of these vector fields to the whole of~$X$.
We remark that, in order to apply our theorem to this situation,
a finite simplicial model of the classifying space $BSO(\thedim)$
would have to be constructed. 
As far as we know, this has not been carried out yet.

We briefly remark that for non-oriented bundles, it is possible to pass to certain two-fold ``orientation'' coverings and reduce the problem to one for oriented bundles but with a further $\bbZ/2$-equivariance constraint.
\end{enumerate}

\subsection*{Main theorem} Now we are ready to state the main
result of this paper.

\begin{theorem} \label{thm:main_theorem}
Let $G$ be a finite group and let an instance of the generalized lifting-extension problem be input as follows: $A$, $X$, $Y$, $B$ are finite simplicial sets with free simplicial actions of $G$, $A$ is an equivariant simplicial subset of $X$, and $f$, $g$, $\psi$ are equivariant simplicial maps. Furthermore, both $B$ and $Y$ are assumed to be simply connected,
and the \emph{homotopy fibre}\footnote{The homotopy fibre of
$\psi$ is the fibre of $\psi'$, where $\psi$
is factored through $Y'$ as above. It is unique up to homotopy
equivalence, and so the connectivity is well defined.}
of $\psi\col Y\to B$ is assumed to be $d$-connected for some $d\geq 1$.

There is an algorithm that, for $\dim X\leq 2d+1$, 
 decides the existence of a solution.
Moreover, if $\dim X\leq 2d$ and a solution exists, then the set $[X,Y]_B^A$
can be equipped with the structure of an abelian group,
and the algorithm computes its isomorphism type.
\ifpoly
The running time of this algorithm is polynomial when $G$ and $\theconn$ are fixed.
\fi
\end{theorem}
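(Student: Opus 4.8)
The plan is to reduce the generalized equivariant lifting-extension problem, through an effective equivariant Moore--Postnikov tower, to a bounded sequence of equivariant-cohomology computations. First I would replace $\varphi$ by a fibration: by Lemma~\ref{l:fibrant_replacement} factor $\varphi\col Y\to B$ equivariantly as $Y\xra{\sim}Y'\fib B$, so that it suffices to treat the lifting-extension problem for the fibration $\varphi'\col Y'\fib B$, whose homotopy fibre $F$ is $d$-connected. Then I would build the equivariant Moore--Postnikov tower
\[
Y'\lra\cdots\lra P_n\lra P_{n-1}\lra\cdots\lra P_0=B,
\]
in which $P_n\to P_{n-1}$ is a twisted principal fibration with fibre the equivariant Eilenberg--MacLane space $K(\pi_n,n)$, where $\pi_n=\pi_n(F)$ is a finitely generated $\ZG$-module (since $B$ is $1$-connected and $G$ acts freely, the only coefficient twisting comes from $G$), and $P_n=B$ for $n\le d$. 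Because the fibre of $Y'\to P_m$ is $m$-connected, for $m=\dim X$ the obstructions and the indeterminacy involved in further lifting a solution into $P_m$ along $Y'\to P_m$ all lie in cohomology of $(X,A)$ in degrees exceeding $m$ and hence vanish; so the solutions of the original problem correspond bijectively to homotopy classes of solutions of the lifting-extension problem into $P_m$, and only the stages $d<n\le m$ matter --- a bounded number of them, since $d$ is fixed.

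The computational content is twofold. On the structural side, building on \cite{CKMSVW11,polypost} and the effective-homology machinery of Sergeraert et al., one constructs each $P_n$ as a simplicial set with effective homology carrying the free $G$-action, together with an algorithm evaluating the $k$-invariant $k_{n+1}\col P_n\to K(\pi_{n+1},n+2)$; the groups $\pi_n(F)$ with their $\ZG$-module structure are obtained from the effective homology of the simply connected homotopy fibre $F$ as in \cite{CKMSVW11}. Equivariance is handled throughout by descending to quotients equipped with the local system coming from the free action (equivalently, by carrying finitely generated $\ZG$-modules along), so that $H^{*}_G(X,A;\pi_n)$ becomes ordinary local-coefficient cohomology of $(X/G,A/G)$, which is algorithmically computable, in polynomial time for fixed $G$. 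On the obstruction-theoretic side, I would compute the sets $[X,P_n]_B^A$ of homotopy classes of solutions into $P_n$ by induction on $n$, starting from $[X,P_d]_B^A=\{g\}$: given a solution $\ell_{n-1}$ into $P_{n-1}$, the obstruction to lifting it to $P_n$ is $\ell_{n-1}^{*}k_{n+1}\in H^{n+1}_G(X,A;\pi_n)$, and when it vanishes the lifts (up to the appropriate homotopy) form a torsor over $H^{n}_G(X,A;\pi_n)$. Running this for $d<n\le m$ decides whether $[X,P_m]_B^A=[X,Y]_B^A$ is nonempty, which settles the existence question for $\dim X\le 2d+1$; the same bookkeeping, applied to the composite of a given simplicial solution with $Y\to Y'\to P_n$, identifies the class it represents.

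The delicate point --- which I expect to be the main obstacle --- is to equip $[X,Y]_B^A$ with an abelian group structure when $\dim X\le 2d$ and to show it is finitely generated with computable isomorphism type. The enabling observation is a stability phenomenon: only fibre homotopy groups in degrees $\ge d+1$ occur, so a cup product of two positive-degree fibre classes has degree $\ge 2d+2$, which exceeds $\dim(X\times[0,1])=\dim X+1$ precisely when $\dim X\le 2d$; hence, on the skeleta relevant for maps out of $X$ and for homotopies $X\times[0,1]$, all $k$-invariants evaluate linearly and each $P_n\to P_{n-1}$ behaves as a principal fibration of twisted simplicial abelian groups. This turns the induction into a tower of exact sequences of finitely generated abelian groups
\[
H^{n}_G(X,A;\pi_n)\;\lra\;[X,P_n]_B^A\;\lra\;[X,P_{n-1}]_B^A\;\xrightarrow{\ k_{n+1}\ }\;H^{n+1}_G(X,A;\pi_n),
\]
exact once a fixed solution is declared the zero element, so that $[X,Y]_B^A=[X,P_m]_B^A$ is a finitely generated abelian group assembled in finitely many steps from computable cohomology groups; standard arguments (already alluded to in the excerpt) show its isomorphism type does not depend on the choices of $Y'$ or of the base solution. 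The one extra dimension that must be sacrificed (so $\dim X\le 2d$, not $2d+1$) is exactly what keeps these $(\dim X+1)$-dimensional homotopies inside the stable range; for $\dim X=2d+1$ they leave it and only the existence statement survives. Finally, as in \cite{polypost}, fixing $G$ and $d$ bounds the tower length and the complexity of all the $\ZG$-modules and local systems, so that the effective-homology reductions and cohomology computations run in polynomial time. The details are carried out in Section~\ref{s:main_proofs}.
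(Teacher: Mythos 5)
Your overall architecture matches the paper's: factor $\varphi$ through a fibration, build an effective equivariant Moore--Postnikov tower, truncate at $\thedim=\dim X$, and compute $[X,\Pnew]^A_B$ stage by stage, with stability supplying a group structure. But there are two genuine gaps.

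First, your inductive step rests on the claim that the lifts of a fixed $\ell_{\thedim-1}$ form a torsor over $H^{\thedim}_G(X,A;\pin)$, and correspondingly your exact sequence has only four terms. This conflates homotopy over $\Pold$ with homotopy over $B$: two lifts differing by a nonzero class in $H^{\thedim}_G(X,A;\pin)$ can still be homotopic over $B$, via a homotopy whose projection to $\Pold$ is a nontrivial self-homotopy of $\ell_{\thedim-1}$. The fibres of $[X,\Pnew]^A_B\to[X,\Pold]^A_B$ are torsors over the cokernel of a connecting map
\[
\partial\col[\stdsimp1\times X,\Pold]^{(\partial\stdsimp1\times X)\cup(\stdsimp1\times A)}_B\lra H^{\thedim}_G(X,A;\pin),
\]
so the sequence needs a fifth term whose domain must itself be computed by a parallel induction on the pairs $(\stdsimp1\times X,(\partial\stdsimp1\times X)\cup(\stdsimp1\times A))$. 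Without it your four-term sequence does not determine $[X,\Pnew]^A_B$ even as an abstract group; and even with all five terms one must still solve the resulting extension problem algorithmically, which requires computable set-theoretic sections of $\pnst$ and a computable retraction onto $\im\jnst$ (Lemma~\ref{l:ses}) --- the latter amounts to computing explicit nullhomotopies down the whole tower, a nontrivial algorithm in its own right (Section~\ref{sec:description}).

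Second, ``all $k$-invariants evaluate linearly because cup products land above $\dim X+1$'' is an existence heuristic, not a construction; the paper itself proves the corresponding existence statement (Theorem~\ref{t:existence_of_H_space_structures}) and then points out that it is useless for the algorithm. What is needed is an explicit, computable fibrewise addition on each pointed stable stage, and the natural way to produce it --- lifting the additivity defect $m(x,y)=\kn(x+y)-(\kn x+\kn y)$ through the Eilenberg--MacLane fibration relative to $\Pold\vee_B\Pold$ --- founders on the fact that the pair $(\Pold\times_B\Pold,\Pold\vee_B\Pold)$ is not known to admit effective homology. The paper must replace it by the homotopy version $(\Pold\htimes_B\Pold,\Pold\hvee_B\Pold)$, which weakens the H-space structure (the section is a unit only up to specified coherent homotopies) and forces a strictification step before $[X,\Pnew]^A_B$ acquires a genuine addition. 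On top of this, the zero section is non-canonical and may fail to lift from $\Pold$ to $\Pnew$, which is why the paper pulls the tower back along $g$ to a tower over $X$ and runs the ``find one solution'' and ``classify all solutions'' phases separately. None of this appears in your sketch, and it is where most of the actual work lies.
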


As in Theorem~\ref{theorem:equivariant}, the isomorphism type means an abstract abelian group (given by generators and relations) isomorphic to $[X,Y]^A_B$.
Given an arbitrary diagonal $\ell\col X\to Y$ in the considered square, one can compute the element of this group that $\ell$ represents.

Constructing the abelian group structure on $[X,Y]_B^A$
will be one of our main objectives.
In the case of all \emph{continuous} maps $X\to Y$ up to
homotopy, with no equivariance condition imposed, as in \cite{CKMSVW11},
the abelian group structure on $[X,Y]$ is canonical.
In contrast, in the setting of the
lifting-extension problem, the structure is canonical only up to
a choice of a zero element.

This non-canonicality of zero is one of the phenomena making
the equivariant problem (and the lifting-extension problem)
substantially different from the non-equivariant case treated
in \cite{CKMSVW11}. We will have to deal with the choice of zero,
and working with ``zero sections'' in the considered fibrations.

\subsection*{Embeddability and equivariant maps}
Theorem~\ref{theorem:equivariant}
 has the following consequence
for embeddability of simplicial complexes:

\begin{theorem}\label{t:emb-metast}
\ifpoly
Let $n$ be a fixed integer.
\else
Let $n$ be an integer.
\fi
There is an algorithm that, given a finite simplicial complex $K$ of dimension $k \leq \frac23 n-1$, decides the existence of an embedding of $K$ into $\bbR^n$%
\ifpoly
{} in polynomial time%
\fi%
.
\end{theorem}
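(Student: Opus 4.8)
The plan is to reduce embeddability to an instance of the equivariant-map problem solved by Theorem~\ref{theorem:equivariant}, taken with $G=\bbZ_2$. Recall (see the discussion in the introduction) that by \cite{Weber:PlongementsPolyedresDomaineMetastable-1967}, in the metastable range $k=\dim K\le\tfrac23 n-1$ the complex $K$ embeds into $\bbR^n$ if and only if there exists a $\bbZ_2$-equivariant continuous map from the deleted product $(|K|\times|K|)\smallsetminus\Delta_{|K|}$ to $S^{n-1}$, where $\bbZ_2$ acts by swapping the two coordinates in the source and antipodally on the target; both actions are free, since the fixed set of the swap, the diagonal, has been removed. So it suffices to decide the existence of such an equivariant map.

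The open set $(|K|\times|K|)\smallsetminus\Delta_{|K|}$ is not a finite complex, but it is $\bbZ_2$-homotopy equivalent to the \emph{combinatorial deleted product} $\widetilde K$: after fixing an order on the vertices of $K$ and viewing $K$ as a finite simplicial set, $\widetilde K$ is the simplicial subset of $K\times K$ consisting of those simplices whose two projections to $K$ have disjoint vertex sets, again with the coordinate-swap action. This action is simplicial and free (a $p$-simplex of $K$ has at least one vertex, so no diagonal pair is vertex-disjoint), one has $\dim\widetilde K\le 2k$, and $\widetilde K$ is computed from $K$ in time polynomial in the number of simplices of $K$; the $\bbZ_2$-homotopy equivalence with the topological deleted product is classical (see e.g.\ \cite{Mat-top}). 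For the target we fix, once and for all for the given $n$, a finite simplicial model of $S^{n-1}$ with a free simplicial antipodal $\bbZ_2$-action — for instance the boundary of the $n$-dimensional cross-polytope with vertices $\pm e_1,\dots,\pm e_n$: it triangulates $S^{n-1}$, hence is $(n-2)$-connected, its antipodal action is free and simplicial, and its size depends only on $n$.

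Now apply Theorem~\ref{theorem:equivariant} with $X=\widetilde K$, $Y=S^{n-1}$, $G=\bbZ_2$ and $d=n-2$. Assume $n\ge 3$, so that $d\ge 1$; the metastable hypothesis gives
\[
\dim X\le 2k\le\tfrac43 n-2\le 2n-4=2d,
\]
the last inequality being equivalent to $n\ge 3$. In particular $\dim X\le 2d+1$, so the algorithm of Theorem~\ref{theorem:equivariant} decides whether a continuous equivariant map $X\to Y$ exists, which by the first paragraph is equivalent to the existence of an embedding of $K$ into $\bbR^n$. Since $n$, hence also $G$ and $d$, are fixed, that algorithm runs in time polynomial in the size of its input, which in turn is polynomial in the size of $K$; together with the polynomial-time construction of $\widetilde K$ this gives the claimed running time. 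The remaining cases $n\le 2$ are trivial: then $k\le 0$, so $K$ is a finite discrete set, which embeds into $\bbR^n$ for every $n\ge 1$, while for $n\le 0$ only $K=\emptyset$ lies in the asserted range.

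The only ingredients from outside the present paper are Weber's deleted-product criterion and the $\bbZ_2$-homotopy equivalence between the topological and combinatorial deleted products, both classical. The one point that genuinely requires care is to ensure that $\widetilde K$ and the chosen model of $S^{n-1}$ are honest \emph{finite simplicial sets} equipped with \emph{free simplicial} $\bbZ_2$-actions, so that the hypotheses of Theorem~\ref{theorem:equivariant} are met verbatim; everything else is bookkeeping of dimensions.
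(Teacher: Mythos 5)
Your proposal is correct and follows essentially the same route as the paper: Weber's deleted-product criterion in the metastable range, replacement of the topological deleted product by the (finite, free, simplicial) combinatorial deleted product, a fixed free simplicial model of $S^{n-1}$, and an application of Theorem~\ref{theorem:equivariant} with $G=\bbZ_2$ and $d=n-2$ (the paper leaves the dimension bookkeeping and the small-$n$ cases to the reader, which you spell out correctly).
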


The algorithmic problem of testing embeddability of a given $k$-dimensional
simplicial complex into $\bbR^n$, which is a natural generalization
of graph planarity,
was studied in \cite{MatousekTancerWagner:HardnessEmbeddings-2011}.
Theorem~\ref{t:emb-metast} clarifies the decidability of this problem
for $k\le \frac23 n-1$; this is
the so-called \emph{metastable range} of dimensions,
which was left open in \cite{MatousekTancerWagner:HardnessEmbeddings-2011}.
Briefly, in the metastable range, the classical theorem of Weber (see \cite{Weber:PlongementsPolyedresDomaineMetastable-1967})
asserts that embeddability is equivalent to the existence of a $\Z/2$-equivariant map $(K\times K)\smallsetminus \Delta_K \to S^{\thedim-1}$ whose domain is equivariantly homotopy equivalent to a finite simplicial complex%
\footnote{%
	The complex is (the canonical triangulation of) the union of all products $\sigma \times \tau$ of disjoint simplices $\sigma,\,\tau \in K$, $\sigma \cap \tau = \emptyset$.}
with a free simplicial action of $\Z/2$. Thus, Theorem~\ref{t:emb-metast}
follows immediately from Theorem~\ref{theorem:equivariant};
we refer to \cite{MatousekTancerWagner:HardnessEmbeddings-2011} for
details.

We also remark that the algorithm of Theorem~\ref{t:emb-metast} does not produce an actual map $(K \times K) \smallsetminus \Delta_K \to S^{\thedim-1}$ and, thus, we do not know of an effective way of producing an actual embedding (in addition, we have not analyzed Weber's proof sufficiently well to be able to tell whether it produces an embedding from an equivariant map).

\ifpoly\else
\subsection*{Polynomial running times}
We remark that, for fixed $G$ and $\theconn$, the algorithms of Theorems~\ref{theorem:equivariant} and~\ref{thm:main_theorem} run in polynomial time. The algorithm of Theorem~\ref{t:emb-metast} also runs in polynomial time when the dimension $n$ is fixed. These claims are proved in an extended version \cite{aslep-long} of the present paper.
\fi

\subsection*{Outline of the proof}

In the rest of this section, we sketch the main ideas and tools needed for the algorithm of Theorem~\ref{thm:main_theorem}. Even though the computation is very similar in its nature to that of \cite{CKMSVW11}, there are several new ingredients which we had to develop in order to make the computation possible. We describe these briefly after the outline of the proof.

Our first tool is a Moore--Postnikov tower $\Pnew$ for $\psi \col Y\ra B$ within the framework of (equivariant) effective algebraic topology (essentially, this means that all objects are representable in a computer); it is enough to construct the number of stages equal to the dimension of $X$. It can be shown that $[X,Y]^A_B\cong[X,\Pnew]^A_B$ for $\thedim \geq \dim X$ and so it suffices to compute inductively $[X,\Pnew]^A_B$ from $[X,\Pold]^A_B$ for $\thedim \leq \dim X$. This is the kind of problems considered in obstruction theory. Namely, there is a natural map $[X,\Pnew]^A_B\to[X,\Pold]^A_B$ and it is possible to describe all preimages of any given homotopy class $[\ell] \in [X,\Pold]^A_B$ using, in addition, an inductive computation of $[\stdsimp{1}\times X,\Pold]^{(\partial\stdsimp{1}\times X)\cup(\stdsimp{1}\times A)}_B$. In general however, $[X,\Pold]^A_B$ is infinite and it is thus impossible to compute $[X,\Pnew]^A_B$ as a union of preimages of all possible homotopy classes $[\ell]$ (on the other hand, if these sets are finite, the above description does provide an algorithm, probably not very efficient, see \cite{Brown, odd-spheres}).

For this reason, we use in the paper to a great advantage our second tool, an abelian group structure on the set $[X,\Pnew]^A_B$ of homotopy classes of diagonals, which only exists on a stable part $\thedim\leq 2\theconn$ and, of course, only if this set is non-empty. The group structure comes from an ``up to homotopy'' abelian group structure on $\Pnew$ (or, in fact, a certain pullback of $\Pnew$) which we construct algorithmically -- this is the heart of the present paper. We remark that the abelian group structure on $[X,\Pnew]^A_B$ was already observed in \cite{McClendon}; however, this paper did not deal with algorithmic aspects.

In the stable part of the Moore--Postnikov tower, the natural map $[X,\Pnew]^A_B\to[X,\Pold]^A_B$ is a group homomorphism and the above mentioned computation of preimages of a given homotopy class $[\ell]$ may be reduced to a \emph{finite} set of generators of the image; the computation is conveniently summarized in a long exact sequence \eqref{eq:les}. This finishes the rough description of our inductive computation.

\subsection*{New tools}

In the process of building the Moore--Postnikov tower, and also later, it is important to work with infinite simplicial sets, such as the Moore--Postnikov stages $P_n$, in an algorithmic way. This is handled by the so-called \emph{equivariant} effective algebraic topology and effective homological algebra. The relevant non-equivariant results are described in \cite{SergerGenova,polypost}. In many cases, only minor and/or straightforward modifications are needed. One exception is the equivariant effective homology of Moore--Postnikov stages, for which we rely on a separate paper \cite{Vokrinek}.

Compared to our previous work \cite{CKMSVW11}, the main new ingredient is the weakening of the \Hopf space structure that exists on Moore--Postnikov stages. This is needed in order to carry out the whole computation algorithmically. Accordingly, the construction of this structure is much more abstract. In \cite{CKMSVW11}, we had $B=*$ and Postnikov stages carried a unique basepoint. In the case of nontrivial $B$, the basepoints are replaced by sections and Moore--Postnikov stages may not admit a section at all -- this is related to the possibility of $[X,Y]^A_B$ being empty. It might also happen that we choose a section of $\Pold$ which does not lift to $\Pnew$. In that case, we need to change the section of $\Pold$ and compute $[X, \Pold]^A_B$ again from scratch.

\subsection*{Plan of the paper}

In the second section, we give an overview of equivariant effective homological algebra that we use in the rest of the paper.
The third section is devoted to the algorithmic construction of an equivariant Moore--Postnikov tower.
The proofs of Theorems~\ref{theorem:equivariant} and~\ref{thm:main_theorem}%
\ifpoly
, without their polynomial time claims,
\else
{}
\fi
are given in the following section, although proofs of its two important ingredients are postponed to Sections~$5$ and $6$.
In the fifth section, we construct a certain weakening of an (equivariant and fibrewise) \Hopf space structure on pointed stable stages of Moore--Postnikov towers.
In the sixth section, we show how this structure enables one to endow the sets of homotopy classes with addition in an algorithmic way. Finally, we derive an exact sequence relating $[X,\Pnew]^A_B$ to $[X,\Pold]^A_B$ and $[\stdsimp{1}\times X,\Pold]^{(\partial\stdsimp{1}\times X)\cup(\stdsimp{1}\times A)}_B$ and thus enabling an inductive computation.
In the seventh section, we provide proofs that we feel would not fit in the previous sections.
\ifpoly
In the last section, we prove polynomial bounds for the running time of our algorithms.
\fi

\section{Equivariant effective homological algebra} \label{sec:equi_eff_hlgy_alg}

\heading{Basic setup}
For a simplicial set, the face operators are denoted by $d_\thedimm$, and the degeneracy operators by $s_\thedimm$. The standard $\theotherdim$-simplex $\stdsimp\theotherdim$ is a simplicial set with a unique non-generate $\theotherdim$-simplex and no relations among its faces. The simplicial subset generated by the $\thedimm$-th face of $\stdsimp\theotherdim$ will be denoted by $\bdry_\thedimm\stdsimp\theotherdim$. The boundary $\partial\stdsimp\theotherdim$ is the union of all these faces and the $\thedimm$-th horn $\horn\theotherdim\thedimm$ is generated by all faces $\bdry_\thedimmm\stdsimp\theotherdim$, $\thedimmm\neq\thedimm$. Finally, we denote the vertices of $\stdsimp\theotherdim$ by $0,\ldots,\theotherdim$.

Sergeraert et al.\ (see \cite{SergerGenova})
have developed an ``effective version'' of homological algebra, in which
a central notion is an object (simplicial set or chain complex) with
effective homology. Here we will discuss analogous notions in the equivariant
setting, as well as some other extensions. For a key result,
we rely on a separate paper \cite{Vokrinek} which shows, roughly speaking,
that if the considered action is free, equivariant effective homology can be obtained from non-equivariant
one.

We begin with a description of the basic computational objects,
sometimes called \emph{locally effective} objects.
The underlying idea is that in every definition one replaces sets
by computable sets and mappings by computable mappings.
For us, a \emph{computable set} will be a set whose elements have a finite encoding
by bit strings, so that they can be represented in a computer.
On the other hand, it may happen that
no ``global'' information about the set is available; e.g.\ 
it is  algorithmically undecidable in general whether a given
computable set is nonempty.
A \emph{computable subset} of a computable set $T$ is a subset $S \subseteq T$ equipped with an algorithm that decides, for a given element of $T$, whether it belongs to $S$.
A \emph{mapping} between computable sets is \emph{computable} if there is
an algorithm computing its values.

We will need two particular cases of this principle -- simplicial sets and chain complexes.

\heading{Simplicial sets}
A \emph{locally effective simplicial set}
is a simplicial set $X$ whose simplices have a specified finite encoding and whose face and degeneracy operators are specified by algorithms. Our simplicial sets will be equipped with a simplicial \emph{action} of a finite group $G$ that is also computed by an algorithm (whose input is an element of $G$ and a simplex of $X$). We will assume that this action is \emph{free} and that a distinguished set of representatives of orbits is specified -- such $X$ will be called \emph{$G$-cellular}. In the locally effective context, we require that there is an algorithm that expresses each simplex $x\in X$ (necessarily in a unique way) as $x=ay$ where $a\in G$ and $y\in X$ is a distinguished simplex.

\begin{remark}
We will not put any further restrictions on the representation of simplicial sets in a computer -- the above algorithms will be sufficient. On the other hand, it is important that such representations exist. We will describe one possibility for finite simplicial sets and complexes.

Let $X$ be a finite simplicial set with a free action of $G$. Let us choose arbitrarily one simplex from each orbit of the non-degenerate simplices; these simplices together with all of their degeneracies are the distinguished ones. Then every simplex $x\in X$ can be represented uniquely as $x=as_Iy$, where $a\in G$, $s_I$ is an iterated degeneracy operator (i.e.\ a composition $s_{\thedimm_\theotherdim}\cdots s_{\thedimm_1}$ with $\thedimm_1<\cdots<\thedimm_\theotherdim$), and $y$ is a non-degenerate distinguished simplex. With this representation, it is possible to compute the action of $G$ and the degeneracy operators easily, while face operators are computed using the relations among the face and degeneracy operators and a table of faces of non-degenerate distinguished simplices. This table is finite and it can be provided on the input.

A special case is that of a finite simplicial complex. Here, one can prescribe a simplex (degenerate or not) uniquely by a finite sequence of its vertices.
\end{remark}

\heading{Chain complexes}
For our computations, we will work with nonnegatively graded chain complexes $C_\ast$ of abelian groups on which $G$ acts by chain maps; denoting by $\ZG$ the integral group ring of $G$, one might equivalently say that $C_*$ is a chain complex of $\ZG$-modules. We will adopt this terminology from now on. We will also assume that these chain complexes are \emph{$\ZG$-cellular}, i.e.\ equipped with a distinguished $\ZG$-basis; this means that for each $\thedim\geq 0$ there is a collection of distinguished elements of $C_\thedim$ such that the elements of the form $ay$, with $a\in G$ and $y$ distinguished, are all distinct and form a $\Z$-basis of $C_\thedim$.

In the locally effective version, we assume that the elements of the chain complex have a finite encoding, and there is an algorithm expressing arbitrary elements as (unique) $\ZG$-linear combinations of the elements of the distinguished bases.
We require that the operations of zero, addition, inverse, multiplication by elements of $\ZG$, and differentials are computable.\footnote{These requirements (with the exception of the differentials) are automatically satisfied when the elements of the chain complex are represented directly as $\ZG$-linear combinations of the distinguished bases.}

A basic example, on which these assumptions are modelled, is that of the normalized chain complex $C_*X$ of a simplicial set $X$ (the quotient of the usual chain complex by the subcomplex spanned by degenerate simplices): for each $\thedim\geq 0$, a $\Z$-basis of $C_\thedim X$ is given by the set of nondegenerate $\thedim$-dimensional simplices of $X$. If $X$ is equipped with a free simplicial action of $G$, then this induces an action of $G$ on $C_*X$ by chain maps, and a $\ZG$-basis for each $C_\thedim X$ is given by a collection of nondegenerate distinguished $\thedim$-dimensional simplices of $X$, one from each $G$-orbit.

If $X$ is locally effective as defined above, then so is $C_*X$ (for evaluating the differential, we observe that a simplex $x$
is degenerate if and only if $x=s_\thedimm d_\thedimm x$ for some $\thedimm$, and this can be checked algorithmically).

\begin{convention}\label{con:G_action_loc_eff}
We fix a finite group $G$. All simplicial sets are
locally effective, equipped with a free action of $G$ and $G$-cellular in the locally effective sense.
All chain complexes are non-negatively graded locally effective chain complexes
of free $\ZG$-modules that are moreover $\ZG$-cellular in the locally effective sense.

All simplicial maps, chain maps, chain homotopies,
etc.\ are equivariant and computable.
\end{convention}

Later, Convention~\ref{con:fibrewise} will introduce additional standing assumptions.

\begin{definition}
An \emph{effective} chain complex is a (locally effective) chain complex
equipped with an
algorithm that generates a list of elements
of the distinguished basis in any given dimension
(in particular, the distinguished bases are finite in each dimension).
\end{definition}

For example, if a simplicial set $X$ admits an algorithm generating a (finite) list of its non-degenerate
distinguished simplices in any given dimension%
\ifpoly
{} (we call it \emph{effective} in Section~\ref{sec:polynomiality})%
\fi%
, then its normalized chain complex $C_*X$ is effective.

\heading{Reductions, strong equivalences}\label{s:effReduction}

We recall that a \emph{reduction} (also called \emph{contraction} or \emph{strong deformation retraction}) $C_*\Ra C'_*$ between two chain complexes
is a triple $(\alpha,\beta,\eta)$ such that $\alpha\col C_*\ra C'_*$
and $\beta\col C'_*\ra C_*$ are equivariant chain maps such that $\alpha\beta=\id$ (i.e.\ $\beta$ is an inclusion with retraction $\alpha$) and $\eta$ is an equivariant chain homotopy on $C_*$ with $\partial\eta+\eta\partial=\id-\beta\alpha$ (i.e.\ $\eta$ is a deformation of $C_*$ onto $C'_*$); moreover, we require that $\eta\beta=0$, $\alpha\eta=0$ and $\eta\eta=0$. The following diagram illustrates this definition:

\[(\alpha,\beta,\eta)\col C_*\Ra C'_*\quad\equiv\quad\xymatrix@C=30pt{
C_* \ar@(ul,dl)[]_{\eta} \ar@/^/[r]^\alpha & C'_* \ar@/^/[l]^{\beta}
}\]

Reductions are used to solve homological problems in $C_*$ by translating them to $C_*'$ and vice versa, see \cite{SergerGenova}; a particular example is seen at the end of the proof of Lemma~\ref{l:lift_ext_one_stage}. While, for this principle to work, chain homotopy equivalences would be enough, they are not sufficient for the so-called perturbation lemmas (we will introduce them later), where the real strength of reductions lies.

For the following definition, we consider pairs $(C_*,D_*)$, where
$C_*$ is a chain complex and $D_*$ is a subcomplex of $C_*$.
Such pairs are always understood in the \emph{$\ZG$-cellular} sense; i.e.\ 
the distinguished basis of each $D_\thedim$ is a subset
of the distinguished basis of $C_\thedim$.

\begin{definition}
A \emph{reduction} $(C_*,D_*)\Ra(C'_*,D'_*)$ \emph{of ($\ZG$-cellular) pairs} is a reduction $C_*\Ra C'_*$ that restricts to a reduction $D_*\Ra D'_*$, i.e.\ such that $\alpha(D_*)\subseteq D'_*$, $\beta(D'_*)\subseteq D_*$, and $\eta(D_*)\subseteq D_*$.
\end{definition}

From this reduction, we get an induced reduction $C_*/D_*\Ra C'_*/D'_*$ of the quotients.

We will need to work with a notion more
general than reductions, namely strong equivalences.
A \emph{strong equivalence} $C_*\LRa C'_*$ is a pair of reductions
$C_*\La\widehat C_*\Ra C'_*$, where $\widehat C_*$ is some chain complex.
Similarly, a strong equivalence $(C_*,D_*)\LRa(C'_*,D'_*)$
is a pair of reductions $(C_*,D_*)\La(\widehat C_*,\widehat D_*)\Ra(C'_*,D'_*)$.
Strong equivalences can be (algorithmically) composed:
if $C_*\LRa C'_*$ and $C'_*\LRa C''_*$, then one obtains $C_*\LRa C''_*$
(see e.g.\ \cite[Lemma~2.7]{polypost}).

\begin{definition}
Let $C_*$ be a chain complex. We say that $C_*$ is equipped with \emph{effective homology} if there is specified a strong equivalence $C_*\LRa C_*^\ef$ of $C_*$ with some effective chain complex $C_*^\ef$. Effective homology for pairs $(C_*,D_*)$ of chain complexes is introduced similarly using strong equivalences of pairs. A simplicial set $X$ is equipped with \emph{effective homology} if $C_*X$ is. Finally, a pair $(X,A)$ of simplicial sets is equipped with \emph{effective homology} if $(C_*X,C_*A)$ is.
\end{definition}

\begin{remark}
In what follows, we will only assume $(X,A)$, $Y$, $B$ to be equipped with effective homology. Consequently, it can be seen that Theorems~\ref{theorem:equivariant} and~\ref{thm:main_theorem} also hold under these weaker assumptions. The dimension restriction on $X$ can be weakened to: the equivariant cohomology groups of $(X,A)$, defined in Section~\ref{sec:equiv_cohomology}, vanish above dimension $2\theconn$.

By passing to the mapping cylinder $X'=(\stdsimp1\times A)\cup X$, we may even relax the condition on the pair $(X,A)$ to each of $A$, $X$ being equipped with effective homology separately since then the pair $(X',A)$ has effective homology (this is very similar to but easier than Proposition~\ref{p:effective_homotopy_colimits}) and the resulting generalized lifting-extension problem is equivalent to the original one.
\end{remark}

The following theorem shows that, in order to equip a chain complex
with effective homology, it suffices to have it equipped
with effective homology in the non-equivariant sense.

\begin{theorem}[\cite{Vokrinek}]\label{t:vokrinek}
Let $C_*$ be a chain complex (of free $\ZG$-modules). Suppose that, as a chain complex of abelian groups, $C_*$ can be equipped with effective homology (i.e.\ in the non-equivariant sense). Then it is possible to equip $C_*$ with effective homology in the equivariant sense. This procedure is algorithmic.
\end{theorem}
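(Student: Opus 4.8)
We are given a strong equivalence $C_*\LRa E_*$ of chain complexes of abelian groups with $E_*$ effective, together with the $\ZG$-module structure on $C_*$; composing reductions, we may assume this strong equivalence and all the homotopies occurring in it are available algorithmically. The aim is to produce an effective complex of free $\ZG$-modules and an equivariant strong equivalence connecting it to $C_*$. The key device is the bar resolution $W_*$ of $\bbZ$ by free $\ZG$-modules: each $W_n$ is a finitely generated free $\ZG$-module, so $W_*$ is effective (over $\ZG$, hence over $\bbZ$), and the augmentation $\mathrm{aug}\col W_*\to\bbZ$ is an equivariant quasi-isomorphism carrying an explicit but \emph{non}-equivariant contraction $\eta_W$ (the extra degeneracy). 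Set $\widehat C_*\defeq W_*\otimes_\bbZ C_*$ with the diagonal $G$-action and the tensor differential. Since $C_*$ is $\ZG$-free and each $W_n$ is $\bbZ$-free, the untwisting isomorphism turns each diagonal module $W_n\otimes_\bbZ C_m$ into a free $\ZG$-module; as only finitely many of these finite-rank pieces contribute in any total degree, $\widehat C_*$ is a bounded-below, cellular, locally effective complex of free $\ZG$-modules.

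First I would exhibit $C_*$ as an honest retract of $\widehat C_*$ in $\ZG$-chain complexes, with computable structure maps. The map $\alpha_0\defeq\mathrm{aug}\otimes\id\col\widehat C_*\to C_*$ is an equivariant chain map, and $(\alpha_0,\,\beta_0,\,\eta_0)\defeq(\mathrm{aug}\otimes\id,\,\iota_W\otimes\id,\,\eta_W\otimes\id)$ is a non-equivariant reduction $\widehat C_*\Ra C_*$. An equivariant section $\sigma\col C_*\to\widehat C_*$ is then built by induction on degree, putting $\sigma(x)\defeq\eta_0\sigma(\partial x)+\beta_0(x)$ on each distinguished $\ZG$-basis element $x$ and extending $\ZG$-linearly; the reduction identities $\alpha_0\beta_0=\id$, $\alpha_0\eta_0=0$ and $\partial\eta_0+\eta_0\partial=\id-\beta_0\alpha_0$ show at once that $\sigma$ is a chain map with $\alpha_0\sigma=\id_{C_*}$, and it is computable because $\eta_W$, $\iota_W$ and the expansion of elements of $C_*$ in the distinguished basis are. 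Hence $\widehat C_*\cong\sigma(C_*)\oplus\ker\alpha_0$ as $\ZG$-complexes, with $\sigma(C_*)\cong C_*$ and $\ker\alpha_0$ acyclic.

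It remains to equip $\widehat C_*$ with equivariant effective homology, which is the heart of the argument. One cannot merely tensor $C_*\LRa E_*$ with $W_*$, since its structure maps are not $\ZG$-linear. Instead I would transport the $G$-action through $C_*\LRa E_*$ to a \emph{homotopy-coherent} $G$-action on $E_*$, the coherence homotopies being assembled algorithmically from the contractions inside the strong equivalence, and then apply the basic perturbation lemma to the first-quadrant bicomplex $W_*\otimes_\bbZ E_*$: the twisted total differential $\partial^{\mathrm{tw}}$ encoding this action perturbs the tensor differential, local nilpotency is automatic in the first quadrant, and the lemma yields an equivariant strong equivalence $\widehat C_*\LRa(W_*\otimes_\bbZ E_*,\partial^{\mathrm{tw}})$; the target is an effective complex of free $\ZG$-modules, each $W_n\otimes_\bbZ E_m$ being finitely generated free over $\ZG$ with listable basis. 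Combining this with the retract of the second step gives equivariant effective homology for $C_*$: either by splitting the honest, computable, equivariant idempotent $\sigma\alpha_0$ of $\widehat C_*$ across the strong equivalence just obtained, or, equivalently, by the standard propagation of effective homology along the mapping cone, the split exact sequence $0\to\ker\alpha_0\to\widehat C_*\to C_*\to0$ identifying $C_*$ with $\cone(\ker\alpha_0\hookrightarrow\widehat C_*)$ and $\ker\alpha_0$ receiving effective homology from the same perturbation argument. All steps are algorithmic, and the identical construction carried out functorially covers pairs of chain complexes, and hence simplicial sets and simplicial pairs, as well.

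The step I expect to be the main obstacle is producing the equivariant effective homology of $\widehat C_*$: the perturbation-lemma bookkeeping for the homotopy-coherent $G$-action, equivalently the algorithmic rectification of homotopy-coherent $G$-diagrams of chain complexes and the verification that rectification preserves strong equivalences. The difficulty is structural rather than incidental: every contraction naturally at hand --- the one on $W_*$, and those inside the given non-equivariant strong equivalence --- fails to be $\ZG$-linear, so the passage to an equivariant effective model must be routed through the free resolution $W_*$ and the perturbation lemma instead of being performed directly.
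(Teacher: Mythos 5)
The paper itself does not prove this statement: it is imported from the companion paper \cite{Vokrinek}, and the only hint given here is that the non-equivariant strong equivalence $C_*\LRa C_*^\ef$ ``gets replaced by an equivariant one $C_*\LRa BC_*^\ef$, where $BC_*^\ef$ is a bar construction of some sort''. Your skeleton --- the bar resolution $W_*$, the intermediate complex $\widehat C_*=W_*\otimes_\bbZ C_*$ with the diagonal action, freeness via untwisting, and a perturbation argument landing on a twisted $W_*\otimes_\bbZ C_*^\ef$ --- is consistent with that hint, and your inductive construction of the equivariant section $\sigma$ is correct. However, the step you yourself flag as the expected obstacle is the entire content of the theorem, and you do not carry it out: ``transport the $G$-action to a homotopy-coherent $G$-action on $E_*$ and encode it in a twisted differential'' names the goal rather than proving it. It can be done, and more directly than by rectifying homotopy-coherent diagrams: untwist the diagonal action on $W_*\otimes_\bbZ C_*$ (using that $W_*$ is $\ZG$-free) to the action on the bar factor alone, at the price of a perturbation $\delta$ of the tensor differential; this $\delta$ is equivariant for that action and strictly lowers the bar degree. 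Then $\id_{W}\otimes(-)$ applied to the given zigzag $C_*\La\cdot\Ra C_*^\ef$ is a strong equivalence equivariant for the $W$-only actions on all three complexes; Proposition~\ref{p:epl} carries $\delta$ up the left reduction and Proposition~\ref{p:bpl} carries it down the right one, its nilpotency hypothesis holding precisely because $\delta$ lowers the bar degree while the homotopy operators preserve it. Without an argument at this level of concreteness the proposal does not get past the non-equivariant hypothesis.

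There is a second, smaller gap in the assembly. Effective homology in this paper means a pair of reductions $C_*\La\widehat C_*\Ra C_*^{\ef}$, so you need a genuine equivariant reduction $\widehat C_*\Ra C_*$, i.e.\ a full triple $(\alpha_0,\sigma,\eta)$ with $\eta$ equivariant, computable, and satisfying $\partial\eta+\eta\partial=\id-\sigma\alpha_0$ together with the side conditions $\eta\sigma=0$, $\alpha_0\eta=0$, $\eta\eta=0$. Your second step produces only the section $\sigma$, and neither of your closing alternatives supplies the missing homotopy: ``splitting the idempotent $\sigma\alpha_0$ across the strong equivalence'' is not an operation available in this framework, and the cone of $\ker\alpha_0\hookrightarrow\widehat C_*$ is only chain homotopy equivalent to $C_*$, which is not the datum required (note also that the obvious candidate $(\id-\sigma\alpha_0)\,\eta_0\,(\id-\sigma\alpha_0)$ fails to be equivariant, since $\eta_0$ is not). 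The repair is the same inductive scheme you used for $\sigma$: exhibit a $\ZG$-basis of $\ker\alpha_0$ (again by untwisting $I_G\otimes_\bbZ C_*$ and the higher bar pieces) and define an equivariant contraction on it degree by degree, using the non-equivariant contraction $\eta_W\otimes\id$ to produce preimages of cycles, exactly as in the proof of Proposition~\ref{prop:projectivity}. Both repairs are routine, but neither is optional, and together they constitute the actual proof.
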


The original strong equivalence $C_*\LRa C_*^\ef$ gets replaced by an equivariant one $C_*\LRa BC_*^\ef$, where $BC_*^\ef$ is a bar construction of some sort; see \cite{Vokrinek} for details.

Thus, although non-equivariant effective homology is not the same as equivariant effective homology, it is possible to construct one from the other. In this paper, effective homology will be understood in the equivariant sense, unless stated otherwise.

We recall that the \emph{Eilenberg--Zilber reduction} is a particular reduction $C_*(X\times Y)\Ra C_*X\otimes C_*Y$; see e.g.\ \cite{EML54,polypost,SergerGenova}.
It is known to be functorial (see e.g.\ \cite[Theorem~2.1a]{EML54}), and hence it is equivariant. We extend it to pairs.

\begin{proposition}[Product of pairs]\label{prop:relative_product}
If pairs $(X,A)$ and $(Y,B)$ of simplicial sets
are equipped with effective homology, then it is also possible to equip the pair
\[(X,A)\times(Y,B)\defeq\big(X\times Y,(A\times Y)\cup(X\times B)\big)\]
with effective homology.
\end{proposition}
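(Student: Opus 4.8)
The plan is to upgrade the Eilenberg--Zilber reduction to a reduction of \emph{pairs} and then feed it into the tensor-product operation on strong equivalences. I would start from two purely formal observations. First, as a subcomplex of $C_*(X\times Y)$ one has
\[
C_*\big((A\times Y)\cup(X\times B)\big)=C_*(A\times Y)+C_*(X\times B),
\]
since the normalized chains of a union of simplicial subsets are the sum of the normalized chains of the pieces. Second, $C_*A\otimes C_*Y+C_*X\otimes C_*B$ is a \emph{cellular} subcomplex of $C_*X\otimes C_*Y$: taking the distinguished $\ZG$-basis of the tensor product to consist of the elements $e\otimes gf$, with $e$ and $f$ running through the distinguished bases of $C_*X$ and $C_*Y$ and $g$ through $G$, both summands are spanned by subsets of this basis (under the diagonal action $\ZG e\otimes\ZG f$ is $\ZG$-free on $\{e\otimes gf\}_{g\in G}$). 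Now the Alexander--Whitney map $\alpha$, the shuffle map $\beta$ and the contracting homotopy $\eta$ of the Eilenberg--Zilber reduction $(\alpha,\beta,\eta)\col C_*(X\times Y)\Ra C_*X\otimes C_*Y$ are natural; applying naturality to the inclusions $A\hookrightarrow X$ and $B\hookrightarrow Y$ gives $\alpha(C_*(A\times Y))\subseteq C_*A\otimes C_*Y$, $\beta(C_*A\otimes C_*Y)\subseteq C_*(A\times Y)$, $\eta(C_*(A\times Y))\subseteq C_*(A\times Y)$, and symmetrically with $X\times B$. By linearity these add up to the statement that the Eilenberg--Zilber reduction restricts to a reduction of cellular pairs from $\big(C_*(X\times Y),\,C_*((A\times Y)\cup(X\times B))\big)$ to $\big(C_*X\otimes C_*Y,\,C_*A\otimes C_*Y+C_*X\otimes C_*B\big)$.

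Next I would tensor. By hypothesis we are given strong equivalences of pairs $(C_*X,C_*A)\LRa(C_*^\ef X,C_*^\ef A)$ and $(C_*Y,C_*B)\LRa(C_*^\ef Y,C_*^\ef B)$ with pairs of effective chain complexes on the right. The standard tensor-product construction for reductions -- whose contracting homotopy is $\eta\otimes\id+\beta\alpha\otimes\eta'$ -- visibly preserves the subcomplexes $C'_*\otimes D_*$ and $C_*\otimes D'_*$ whenever the two input reductions preserve $C'_*\subseteq C_*$ and $D'_*\subseteq D_*$, so it carries strong equivalences of pairs to strong equivalences of pairs. This yields
\[
\big(C_*X\otimes C_*Y,\,C_*A\otimes C_*Y+C_*X\otimes C_*B\big)\LRa\big(C_*^\ef X\otimes C_*^\ef Y,\,C_*^\ef A\otimes C_*^\ef Y+C_*^\ef X\otimes C_*^\ef B\big),
\]
and the right-hand pair is effective: $C_*^\ef X\otimes C_*^\ef Y$ is effective by the usual argument (its degree-$n$ distinguished basis is the finite set of $e\otimes gf$ with $\deg e+\deg f=n$, and its differential is computed by the Leibniz rule), while $C_*^\ef A\otimes C_*^\ef Y+C_*^\ef X\otimes C_*^\ef B$ is, by the observation above, a cellular subcomplex whose distinguished basis in each degree is again a computable finite set. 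Composing this strong equivalence of pairs with the relative Eilenberg--Zilber reduction (composition of strong equivalences of pairs is algorithmic, exactly as in the non-relative case, cf.\ \cite{polypost}) exhibits $\big(C_*(X\times Y),C_*((A\times Y)\cup(X\times B))\big)$ as strongly equivalent to an effective pair, i.e.\ equips $(X,A)\times(Y,B)$ with effective homology.

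The only points needing genuine care are the two ``restricts to pairs'' verifications -- for the Eilenberg--Zilber reduction and for the tensor-product of (strong equivalences of) reductions -- and I expect the former to be the main obstacle, as it rests on the naturality of the explicit Alexander--Whitney, shuffle and homotopy operators together with the identity $C_*((A\times Y)\cup(X\times B))=C_*(A\times Y)+C_*(X\times B)$ and its cellular analogue for the tensor product; the latter is then immediate from the explicit formulas. Everything is automatically equivariant: the operators $\alpha,\beta,\eta$ and the input strong equivalences are $\ZG$-linear, and $G$ acts diagonally on all tensor products, so no appeal to Theorem~\ref{t:vokrinek} is needed here.
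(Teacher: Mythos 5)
Your proof is correct and follows essentially the same route as the paper: both use functoriality/naturality of the Eilenberg--Zilber reduction to restrict it to the pair, then tensor the two given strong equivalences of pairs (the paper cites Proposition~61 of \cite{SergerGenova} for this step) and observe that the relevant subcomplexes are preserved, and finally compose. Your write-up merely supplies more of the explicit detail (the identity $C_*((A\times Y)\cup(X\times B))=C_*(A\times Y)+C_*(X\times B)$, the cellularity of the tensor subcomplex, and the formula for the tensor-product homotopy) that the paper leaves implicit.
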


\begin{proof}
The Eilenberg--Zilber reduction $C_*(X\times Y)\Ra C_*X\otimes C_*Y$ is functorial, which implies that it restricts to a reduction
\[C_*\big((A\times Y)\cup(X\times B)\big)\Ra(C_*A\otimes C_*Y)+(C_*X\otimes C_*B)\defeq D_*.\]
The strong equivalences $C_*X\LRa C_*^\ef X$ and $C_*Y\LRa C_*^\ef Y$ induce a strong equivalence (by \cite[Proposition~61]{SergerGenova}, whose construction is functorial, and hence applicable to the equivariant setting)
\[C_*X\otimes C_*Y\LRa C_*^\ef X\otimes C_*^\ef Y\]
that, again, restricts to a strong equivalence of the subcomplex $D_*$ above with its obvious effective version $D_*^\ef$. The composition of these two strong equivalences finally yields a strong equivalence $C_*((X,A)\times(Y,B))\LRa(C_*^\ef X\otimes C_*^\ef Y,D_*^\ef)$.
\end{proof}

Important tools, allowing us to work efficiently with reductions, are
two \emph{perturbation lemmas}.
Given a reduction $C_*\Ra C'_*$, they provide a way of obtaining
a new reduction, in which the differentials of the complexes
$C_*$, $C'_*$ are ``perturbed''.
Again, we will need versions for pairs.

\begin{definition} Let $C_*$ be a chain complex with a differential $\partial$. A collection of morphisms $\delta\col C_\thedim\to C_{\thedim-1}$ is called a \emph{perturbation} of the differential $\partial$ if the sum $\partial+\delta$ is also a differential.
\end{definition}

Since there will be many differentials around, we will emphasize them in the notation.

\begin{proposition}[Easy perturbation lemma]\label{p:epl}
Let $(\alpha,\beta,\eta)\col(C_*,D_*,\partial)\Ra(C'_*,D'_*,\partial')$ be a reduction
and let $\delta'$ be a perturbation of the differential $\partial'$ on $C'_*$ satisfying $\delta'(D_*')\subseteq D_*'$. Then $(\alpha,\beta,\eta)$ also constitutes a reduction $(C_*,D_*, \partial+\beta\delta'\alpha)\Ra(C'_*,D'_*, \partial'+\delta')$.
\end{proposition}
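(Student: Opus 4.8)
The plan is to verify directly that the triple $(\alpha,\beta,\eta)$ satisfies all the defining identities of a reduction $(C_*,\partial+\beta\delta'\alpha)\Ra(C'_*,\partial'+\delta')$, using only the original reduction identities together with the chain-map relations $\alpha\partial=\partial'\alpha$, $\partial\beta=\beta\partial'$ and the relation $\partial'\delta'+\delta'\partial'+\delta'^2=0$ that expresses that $\partial'+\delta'$ squares to zero. The only genuinely new point is to check that $\partial+\beta\delta'\alpha$ is itself a differential; after that everything is a short computation in which the auxiliary conditions $\alpha\beta=\id$, $\alpha\eta=0$, $\eta\beta=0$ make all the unwanted cross-terms collapse.

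First I would expand $(\partial+\beta\delta'\alpha)^2$. Using $\partial^2=0$, pushing $\partial$ through $\beta$ and $\alpha$ via the chain-map relations, and collapsing the middle occurrence of $\alpha\beta$ to the identity, this reduces to $\beta(\partial'\delta'+\delta'\partial'+\delta'^2)\alpha$, which vanishes because $\partial'+\delta'$ is a differential. Hence $\partial+\beta\delta'\alpha$ is a differential and $(C_*,\partial+\beta\delta'\alpha)$ is a chain complex. Next I would check that $\alpha$ and $\beta$ are chain maps for the new differentials: $\alpha(\partial+\beta\delta'\alpha)=\partial'\alpha+\delta'\alpha=(\partial'+\delta')\alpha$ using $\alpha\partial=\partial'\alpha$ and $\alpha\beta=\id$, and symmetrically $\beta(\partial'+\delta')=(\partial+\beta\delta'\alpha)\beta$ using $\partial\beta=\beta\partial'$ and $\alpha\beta=\id$.

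Then I would check the homotopy identity: $(\partial+\beta\delta'\alpha)\eta+\eta(\partial+\beta\delta'\alpha)$ equals $(\partial\eta+\eta\partial)+\beta\delta'(\alpha\eta)+(\eta\beta)\delta'\alpha=(\id-\beta\alpha)+0+0$, so $\eta$ is still a chain homotopy $\id\simeq\beta\alpha$ for the new differential. The side conditions $\alpha\beta=\id$, $\eta\beta=0$, $\alpha\eta=0$, $\eta\eta=0$ involve no differential and hold unchanged, and equivariance and computability are automatic since the three maps are untouched. This completes the absolute case.

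Finally, for the pair statement I would observe that $\alpha(D_*)\subseteq D'_*$, $\beta(D'_*)\subseteq D_*$ and $\eta(D_*)\subseteq D_*$ are exactly as before; that $\partial'+\delta'$ restricts to $D'_*$ by the hypothesis $\delta'(D'_*)\subseteq D'_*$; and that $\partial+\beta\delta'\alpha$ restricts to $D_*$ because $\beta\delta'\alpha(D_*)\subseteq\beta\delta'(D'_*)\subseteq\beta(D'_*)\subseteq D_*$. Hence all the identities checked above restrict to a reduction $(D_*,\partial+\beta\delta'\alpha)\Ra(D'_*,\partial'+\delta')$, which is precisely what a reduction of cellular pairs requires. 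There is no serious obstacle here: the lemma is ``easy'' precisely because no smallness or local-nilpotency hypothesis on $\delta'$ is needed, so the geometric series appearing in the basic perturbation lemma does not occur and the proof is pure bookkeeping; the only place one must be attentive is the verification that the new differential on $C_*$ squares to zero and restricts to $D_*$.
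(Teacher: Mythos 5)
Your proof is correct and is essentially the same (direct-verification) argument the paper relies on: the paper simply cites Shih/Sergeraert for the absolute case and notes that subcomplexes and equivariance are preserved because all the maps involved preserve them, which is exactly the bookkeeping you carry out explicitly. All your identities check out, including the key computation that $(\partial+\beta\delta'\alpha)^2=\beta(\partial'\delta'+\delta'\partial'+\delta'^2)\alpha=0$ and the use of $\alpha\eta=0$, $\eta\beta=0$ to kill the cross-terms in the homotopy identity.
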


\begin{proposition}[Basic perturbation lemma]\label{p:bpl}
Let $(\alpha,\beta,\eta)\col(C_*,D_*,\partial)\Ra(C'_*,D'_*,\partial')$ be a reduction
and let $\delta$ be a perturbation of the differential $\partial$ on $C_*$ satisfying $\delta(D_*)\subseteq D_*$. Assume that for every $c\in C_*$ there is a $\nu\in\mathbb N$ such that $(\eta\delta)^{\nu}(c)=0$. Then it is possible to compute a perturbation $\delta'$ of the differential $\partial'$ on $C'_*$ and a reduction $(\alpha',\beta',\eta')\col(C_*,D_*,\partial+\delta)\Ra(C'_*,D'_*, \partial'+\delta')$.
\end{proposition}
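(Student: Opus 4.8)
The plan is to obtain this from the classical (absolute) basic perturbation lemma, together with a bookkeeping argument for the subcomplexes and a termination argument for computability.

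\emph{Step 1 (the absolute statement).} For the underlying reduction $(\alpha,\beta,\eta)\col(C_*,\partial)\Ra(C'_*,\partial')$ this is the classical basic perturbation lemma; see \cite{SergerGenova} (or \cite{polypost} for the form we use). One sets
\[\Sigma\defeq\textstyle\sum_{i\geq 0}(-1)^i(\eta\delta)^i\col C_*\longrightarrow C_*,\]
a degree-$0$ operator which is well defined precisely because the hypothesis $(\eta\delta)^{\nu}(c)=0$ makes the sum pointwise finite; it satisfies $\Sigma(\id+\eta\delta)=(\id+\eta\delta)\Sigma=\id$. The perturbed data is then given by explicit formulas built out of $\alpha,\beta,\eta,\delta$ and $\Sigma$ (the perturbation of $\partial'$ being, in a suitable normalization, $\delta'=\alpha\,\delta\,\Sigma\,\beta$), and a self-contained proof consists in checking, by elementary manipulation using $(\partial+\delta)^2=0$ and the displayed identity for $\Sigma$, that $(\partial'+\delta')^2=0$ and that the new triple satisfies all the reduction identities ($\alpha',\beta'$ are chain maps, $\alpha'\beta'=\id$, $(\partial+\delta)\eta'+\eta'(\partial+\delta)=\id-\beta'\alpha'$, and $\eta'\beta'=\alpha'\eta'=\eta'\eta'=0$). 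Equivalently and perhaps more transparently, a reduction is the same datum as a direct-sum decomposition $C_*\cong C'_*\oplus A_*$ with $A_*$ acyclic and explicitly contractible; under this identification $\delta$ becomes a block matrix, $\partial'+\delta'$ is what one obtains by Gaussian elimination of the acyclic block, and $\Sigma$ is exactly the inverse that this elimination requires to exist. Since this is classical, in the paper I would cite it.

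\emph{Step 2 (the relative refinement).} All the operators produced in Step~1 are assembled from $\alpha,\beta,\eta,\delta$ by composition and by the locally finite sum defining $\Sigma$. The hypothesis $\delta(D_*)\subseteq D_*$, together with the fact that the reduction restricts to $D_*\Ra D'_*$ (i.e.\ $\alpha(D_*)\subseteq D'_*$, $\beta(D'_*)\subseteq D_*$, $\eta(D_*)\subseteq D_*$), implies that $\eta\delta$, and hence $\Sigma$, preserves $D_*$; therefore $\delta'(D'_*)\subseteq D'_*$ and $\alpha',\beta',\eta'$ respect the subcomplexes as well. Moreover the restricted triple $(\alpha|_{D_*},\beta|_{D'_*},\eta|_{D_*})$ is itself a reduction $D_*\Ra D'_*$, $\delta|_{D_*}$ is a perturbation of $\partial|_{D_*}$ satisfying the same nilpotency (it is a special case), and applying the absolute lemma of Step~1 to this restricted data returns, by the very same formulas, the restrictions of $\delta'$ and of $(\alpha',\beta',\eta')$. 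Hence the new data forms a reduction of pairs $(C_*,D_*,\partial+\delta)\Ra(C'_*,D'_*,\partial'+\delta')$, as required (this parallels the relative version of Proposition~\ref{p:epl}).

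\emph{Step 3 (effectiveness) and the main obstacle.} By Convention~\ref{con:G_action_loc_eff} the maps $\alpha,\beta,\eta,\delta$ and all differentials are computable, so it only remains to observe that $\Sigma$ is computable: to evaluate $\Sigma(c)$ one forms the partial sums $\sum_{i=0}^{N}(-1)^i(\eta\delta)^i(c)$, increasing $N$ until $(\eta\delta)^{N+1}(c)=0$, which happens after finitely many steps by hypothesis; since then $(\eta\delta)^{m}(c)=0$ for all $m>N$, this partial sum already equals $\Sigma(c)$. Consequently $\delta'$ and $(\alpha',\beta',\eta')$ are computable and the whole construction is algorithmic. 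The only genuinely nontrivial ingredient is the absolute lemma of Step~1 --- both the choice of the perturbed data and the elementary but lengthy verification of the reduction identities; everything specific to the present statement, namely the passage to pairs and the algorithmic character, is routine once that is in hand.
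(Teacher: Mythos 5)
Your proposal is correct and follows essentially the same route as the paper, which likewise reduces to the classical absolute basic perturbation lemma of Shih/Sergeraert and observes that, because the explicit formulas for $\delta'$, $\alpha'$, $\beta'$, $\eta'$ are built only from $\alpha$, $\beta$, $\eta$, $\delta$ (all of which preserve the subcomplexes and are equivariant and computable), the resulting reduction preserves $D_*$, $D_*'$, is equivariant, and is algorithmic. The only thing worth adding explicitly is the equivariance of the new maps, which follows by exactly the same ``all involved maps are equivariant'' observation you already use for the subcomplexes and for computability.
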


The absolute versions (i.e.\ versions where all considered subcomplexes are zero) of the perturbation lemmas
are due to \cite{Shih}. There are explicit formulas provided there for $\delta'$ etc.\ (see also \cite{SergerGenova}), which show that the resulting reductions are equivariant (since all the involved maps are equivariant). Similarly, these formulas show that in the presence of subcomplexes $D_*$ and $D_*'$, these are preserved by all the maps in the new reductions (since all the involved maps preserve them).

The following proposition is used for the construction of the Moore--Postnikov tower in Section~\ref{sec:Moore_Postnikov}. Here $Z_{\thedim+1}(C_*)$ denotes
the group of all cycles in $C_{\thedim+1}$.

\begin{proposition} \label{prop:projectivity}
Let $C_*$ be an effective chain complex such that $H_\thedimm(C_*)=0$ for $\thedimm\le\thedim$. Then there is a (computable)
retraction $C_{\thedim+1}\ra Z_{\thedim+1}(C_*)$, i.e.\ 
a homomorphism that restricts to the identity on $Z_{\thedim+1}(C_*)$.
\end{proposition}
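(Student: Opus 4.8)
The plan is to exploit the vanishing of homology in low degrees to produce a short exact sequence of free abelian groups that splits, together with the effectiveness hypothesis to make the splitting computable. Since $H_\thedimm(C_*)=0$ for $\thedimm\le\thedim$, the sequence
\[
C_{\thedim+1}\xra{\partial} Z_\thedim(C_*)\ra 0
\]
is exact, i.e.\ $\partial$ restricts to a surjection $C_{\thedim+1}\to Z_\thedim(C_*)=B_\thedim(C_*)$ with kernel exactly $Z_{\thedim+1}(C_*)$. Thus we have a short exact sequence
\[
0\ra Z_{\thedim+1}(C_*)\ra C_{\thedim+1}\xra{\partial} B_\thedim(C_*)\ra 0.
\]
Now $B_\thedim(C_*)$ is a subgroup of the free abelian group $C_\thedim$, hence itself free abelian; therefore this sequence splits, and a splitting $B_\thedim(C_*)\to C_{\thedim+1}$ gives, by subtracting its composite with $\partial$ from the identity, the desired retraction $C_{\thedim+1}\to Z_{\thedim+1}(C_*)$. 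So the only real content is to make a splitting algorithmic.

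For the algorithmic part I would argue as follows. Because $C_*$ is effective, I can list a finite $\ZG$-basis, hence a finite $\Z$-basis $b_1,\dots,b_r$ of $C_{\thedim+1}$ and likewise of $C_\thedim$; computing the matrix of $\partial\col C_{\thedim+1}\to C_\thedim$ in these bases, I can find (e.g.\ by Smith normal form) a $\Z$-basis change exhibiting $\im\partial=B_\thedim(C_*)$ as a free direct summand-complemented subgroup and simultaneously exhibiting a basis $c_1,\dots,c_s$ of $C_{\thedim+1}$ such that $\partial c_1,\dots,\partial c_t$ freely generate $B_\thedim(C_*)$ and $\partial c_{t+1}=\cdots=\partial c_s=0$; the latter elements then form a basis of $Z_{\thedim+1}(C_*)$. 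Sending each $\partial c_i$ ($i\le t$) to $c_i$ and extending by zero defines a computable splitting $\sigma\col B_\thedim(C_*)\to C_{\thedim+1}$, and $\rho\defeq\id - \sigma\partial$ is the required retraction: it is a homomorphism, it kills $\im\sigma$, and on $Z_{\thedim+1}(C_*)=\ker\partial$ it is the identity.

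The main obstacle — and it is a mild one — is purely bookkeeping: one must be careful that ``$H_\thedimm=0$ for $\thedimm\le\thedim$'' is used in the right place, namely to identify $Z_\thedim=B_\thedim$ so that $\partial$ on $C_{\thedim+1}$ lands onto all cycles in degree $\thedim$ (this is what makes $\ker\partial$ equal to the full group of $(\thedim{+}1)$-cycles and makes the target of $\partial$ a subgroup of a free group), and to make sure the algorithm only needs the finitely many basis elements in degrees $\thedim$ and $\thedim+1$, which effectiveness supplies. No perturbation-lemma machinery is needed here; the statement is essentially ``freeness of subgroups of free abelian groups, done effectively via linear algebra over $\Z$.''
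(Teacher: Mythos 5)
There is a genuine gap: equivariance. Under Convention~\ref{con:G_action_loc_eff} the ``homomorphism'' in the statement must be a $\ZG$-module map, and this is exactly how the proposition is used later (in Section~\ref{sec:leftover_proofs} the retraction is composed with the projection to $H_{\thedim+1}$ to produce the \emph{equivariant} cocycle $\knpef$ defining the Postnikov invariant). Your splitting comes from a Smith normal form computation of $\partial\col C_{\thedim+1}\to C_\thedim$ over $\Z$, so it is only $\Z$-linear, and there is no way to repair this by averaging, since $|G|$ is not invertible in $\Z$. In fact your argument, as you observe, uses the acyclicity hypothesis only to identify $Z_\thedim=B_\thedim$ -- and for the purely $\Z$-linear statement even that is unnecessary, because $B_\thedim\subseteq C_\thedim$ is free abelian whatever the homology is, so $Z_{\thedim+1}$ is always a $\Z$-direct summand; the fact that the proposition carries the hypothesis ``$H_\thedimm=0$ for all $\thedimm\le\thedim$'' at all is a signal that the equivariant splitting is the point. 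And equivariantly your weakened hypothesis is not enough: for $G=\Z/2$, $\ZG=\Z[t]/(t^2-1)$, the complex $\ZG\xra{\,\cdot(1-t)\,}\ZG\xra{\,\cdot(1+t)\,}\ZG$ in degrees $2,1,0$ has $H_1=0$, yet $Z_2=\Z\cdot(1+t)$ admits no $\ZG$-linear retraction from $C_2=\ZG$: any equivariant $r$ satisfies $r(1+t)=(1+t)\,r(1)\in 2\Z\cdot(1+t)$, so it cannot restrict to the identity. (Here $H_0\neq 0$, so this does not contradict the proposition; it shows the full hypothesis is doing real work.)

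The paper uses the hypothesis precisely to secure equivariance: it constructs, by induction on dimensions $\le\thedim$, an equivariant contraction $\sigma$ with $\partial\sigma+\sigma\partial=\id$, defining $\sigma$ only on the distinguished $\ZG$-basis elements -- for each such $x$ the element $x-\sigma(\partial x)$ is a cycle, hence a boundary by the vanishing of $H_\thedimm$, and a preimage under $\partial$ is computed by Smith normal form over $\Z$ -- and then extending $\ZG$-linearly; the retraction is $\id-\sigma\partial$ on $C_{\thedim+1}$, which is automatically equivariant and lands in $Z_{\thedim+1}$ because $\partial\sigma\partial=\partial$ in degree $\thedim$. So your linear-algebra step survives as the subroutine computing individual preimages, but the splitting must be organized equivariantly in this inductive way; as a side benefit, the low-dimensional contraction $\sigma$ produced by this proof is reused later (proof of Proposition~\ref{prop:weak_H_space_connect}).
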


\begin{proof}
We construct a contraction%
\footnote{We recall that a contraction is a map $\sigma$ of degree $1$ satisfying $\partial\sigma+\sigma\partial=\id$.}
$\sigma$ of $C_*$ by induction on the dimension,
and use it for splitting $Z_{\thedim+1}(C_*)$ off
$C_{\thedim+1}$. It suffices to define $\sigma$ on
the distinguished bases.
Since every basis element $x\in C_0$ is a cycle,
it must be a boundary. We compute some $y\in C_1$ for which $x=\partial y$,
and we set $\sigma(x)=y$; since $G$ is finite, we may treat $\partial\col C_1\ra C_0$ as a $\bbZ$-linear map between finitely generated free $\bbZ$-modules and solve for $y$ using Smith normal form.

Now assume that $\sigma$ has been constructed up to dimension $\thedimm-1$
in such a way that $\partial\sigma+\sigma\partial=\id$, and we want to
define $\sigma(x)$ for a basis element $x\in C_\thedimm$.
Since $x-\sigma(\partial x)$ is a cycle, we can compute
some $y$ with $x-\sigma(\partial x)=\partial y$, and set
$\sigma(x)=y$.

This finishes the inductive construction of~$\sigma$.
The desired retraction $C_{\thedim+1}\ra Z_{\thedim+1}(C_*)$ is
given by $\id-\sigma\partial$.
\end{proof}

\heading{Eilenberg--MacLane spaces and fibrations}\label{sec:equiv_cohomology}

For an abelian group $\pi$, there is a  simplicial abelian group $K(\pi,\thedim+1)$, whose $\theotherdim$-simplices are the normalized ($\thedim+1$)-cocycles on $\stdsimp \theotherdim$, i.e.\ $K(\pi,\thedim+1)_\theotherdim=Z^{\thedim+1}(\stdsimp{\theotherdim},\pi)$. It is a standard model for the Eilenberg--MacLane space. We will also need a standard model for its path space, which is the simplicial abelian group $E(\pi,\thedim)_\theotherdim=C^{\thedim}(\stdsimp{\theotherdim},\pi)$ of normalized cochains. The coboundary operator $\delta\col E(\pi,\thedim)\to K(\pi,\thedim+1)$ is a fibration with fibre $K(\pi,\thedim)$.

The Eilenberg--MacLane spaces are useful for their relation to cohomology. Here we only summarize the relevant results, details may be found in \cite[Section~24]{May:SimplicialObjects-1992} or \cite[Section~3.7]{polypost} (both in the non-equivariant setup though).

When $\pi$ is a $\ZG$-module, there is an induced action of $G$ on both $K(\pi,\thedim)$ and $E(\pi,\thedim)$. We note that, in contrast to our general assumption, this action is \emph{not free} and consequently, these spaces may not possess effective homology. This will not matter since they will not enter our constructions on their own but as certain principal twisted cartesian products, see \cite{May:SimplicialObjects-1992} for the definition. Firstly, $K(\pi,\thedim)$ possesses non-equivariant effective homology by \cite[Theorem~3.16]{polypost}. The principal twisted cartesian product $P = Q \times_\tau K(\pi,\thedim)$ has a free $G$-action whenever $Q$ does and \cite[Corollary~12]{Filakovsky} constructs the non-equivariant effective homology of $P$ from that of $Q$ and $K(\pi,\thedim)$. Theorem~\ref{t:vokrinek} then provides (equivariant) effective homology for $P$.

It is easy to see that the addition in the simplicial abelian groups $K(\pi,\thedim)$, $E(\pi,\thedim)$ and the homomorphism $\delta$ between them are equivariant. Moreover, for every simplicial set $X$, there is a natural isomorphism
\[\map(X,E(\pi,\thedim))\cong C^\thedim(X;\pi)^G\]
between equivariant simplicial maps and equivariant cochains, that sends $f\col X\ra E(\pi,\thedim)$ to $f^*(\ev)$, where $\ev \in C^\thedim(E(\pi,\thedim);\pi)^G$ is the canonical cochain that assigns to each $\thedim$-simplex of $E(\pi,\thedim)_\thedim$, i.e.\ an $\thedim$-cochain on $\stdsimp\thedim$, its value on the unique non-degenerate $\thedim$-simplex of $\stdsimp{\thedim}$.

The set $\map(X,E(\pi,\thedim))$ is naturally an abelian group, with addition inhereted from that on $E(\pi,\thedim)$, and the above isomorphism is and isomorphism of groups.

When $X$ is finite, this isomorphism is computable (objects on both sides are given by a finite amount of data). When $X$ is merely locally effective, then
an algorithm that computes a simplicial map
$X\ra E(\pi,\thedim)$ can be converted into an algorithm
that evaluates the corresponding cochain in $C^\thedim(X;\pi)^G$,
and vice versa.

The above isomorphism restricts to an isomorphism
\[\map(X,K(\pi,\thedim))\cong Z^\thedim(X;\pi)^G.\]
We will denote the cohomology groups of $C^*(X;\pi)^G$ by $H^*_G(X;\pi)$.%
\footnote{Our groups $H_G^*(X;\pi)$ are the equivariant cohomology groups of $X$ with coefficients in a certain system associated with $\pi$ (see the remark in \cite[Section~I.9]{Bredon:EquivariantCohomology-1967}) or, alternatively, they are the cohomology groups of $X/G$ with local coefficients specified by $\pi$.}
We have an induced isomorphism
\[[X,K(\pi,\thedim)]\cong H^n_G(X;\pi)\]
between homotopy classes of equivariant maps and these cohomology groups. By the naturality of these isomorphisms, the maps which are zero on $A$ correspond precisely to relative cocycles and consequently
\[[(X,A),(K(\pi,\thedim),0)]\cong H^n_G(X,A;\pi).\]

\heading{Constructing diagonals for Eilenberg--MacLane fibrations}
When solving the generalized lifting-extension problem,
we will replace $\psi\col Y\to B$ by a fibration built inductively from
Eilenberg--MacLane fibrations $\delta\col E(\pi,\thedim)\to K(\pi,\thedim+1)$.
The following lemma will
serve as an inductive step in the computation of $[X,Y]^A_B$.
It also demonstrates how effective homology of pairs enters the game.

\begin{lemma} \label{l:lift_ext_one_stage}
There is an algorithm that, given a commutative square
\[\xymatrix@C=30pt{
A \ar[r]^-c \ar@{ >->}[d] & E(\pi,\thedim) \ar@{->>}[d]^\delta \\
X  \ar[r]_-z \ar@{-->}[ru] & K(\pi,\thedim+1)
}\]
where the pair $(X,A)$ is equipped with effective homology,
decides whether a diagonal exists. If it does, it computes one.

If $H^{\thedim+1}_G(X,A;\pi)=0$, then a diagonal exists for
every $c$ and $z$.
\end{lemma}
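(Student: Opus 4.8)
The plan is to translate the square into the language of equivariant relative cochains, using the natural isomorphisms collected just above. Under $\map(X,E(\pi,\thedim))\cong C^\thedim(X;\pi)^G$ and $\map(X,K(\pi,\thedim+1))\cong Z^{\thedim+1}(X;\pi)^G$ (together with the analogous isomorphisms over $A$), the fibration $\delta$ becomes the coboundary $\delta\col C^\thedim(-;\pi)^G\to Z^{\thedim+1}(-;\pi)^G$; the map $c$ corresponds to a cochain $\bar c\in C^\thedim(A;\pi)^G$, the map $z$ to a cocycle $\bar z\in Z^{\thedim+1}(X;\pi)^G$, and the commutativity of the square to the identity $\delta\bar c=\bar z|_A$. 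A diagonal $\ell\col X\to E(\pi,\thedim)$ then corresponds precisely to a cochain $\bar\ell\in C^\thedim(X;\pi)^G$ satisfying $\bar\ell|_A=\bar c$ and $\delta\bar\ell=\bar z$.

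First I would pick an arbitrary equivariant extension $\bar\ell_0\in C^\thedim(X;\pi)^G$ of $\bar c$; this is computable because the cellular pair $(X,A)$ splits $C_\thedim X$ as the $\ZG$-module $C_\thedim A\oplus C_\thedim(X,A)$, so one just extends $\bar c$ by zero on the complementary distinguished basis. Set $w\defeq\bar z-\delta\bar\ell_0$. Since $(\delta\bar\ell_0)|_A=\delta\bar c=\bar z|_A$ and $\delta\bar z=0$, the cochain $w$ is a relative cocycle, $w\in Z^{\thedim+1}(X,A;\pi)^G$. Every candidate diagonal has the form $\bar\ell_0+\bar u$ with $\bar u\in C^\thedim(X,A;\pi)^G$ (this is exactly the constraint $\bar\ell|_A=\bar c$), and then $\delta\bar\ell=\bar z$ reads $\delta\bar u=w$. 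Hence a diagonal exists if and only if $[w]=0$ in $H^{\thedim+1}_G(X,A;\pi)$, and producing a diagonal amounts to producing a primitive $\bar u$ of $w$ in the complex $C^*(X,A;\pi)^G=\mathrm{Hom}_{\ZG}(C_*(X,A),\pi)$ of equivariant cochains vanishing on $A$. In particular, if $H^{\thedim+1}_G(X,A;\pi)=0$ then $w$ is automatically a coboundary, so a diagonal exists for every $c$ and $z$; this settles the last assertion.

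It remains to carry out the search for $\bar u$ algorithmically, and this is where the effective homology of $(X,A)$ enters. The defining strong equivalence of cellular pairs restricts on quotients to a strong equivalence $C_*(X,A)\LRa C_*^\ef$ with $C_*^\ef$ effective (hence free and finitely generated over $\ZG$ in each degree). Applying $\mathrm{Hom}_{\ZG}(-,\pi)$ dualizes each reduction map and homotopy and so turns this span of reductions into a span of reductions between cochain complexes, i.e.\ a strong equivalence $C^*(X,A;\pi)^G\LRa\mathrm{Hom}_{\ZG}(C_*^\ef,\pi)$ whose right-hand side is finitely generated in each degree. A reduction $(\alpha,\beta,\eta)$ transports the inhomogeneous equation $\delta\bar u=w$ faithfully and algorithmically: $w$ is a coboundary upstairs iff $\alpha(w)$ is a coboundary downstairs, and from a downstairs primitive $\bar u'$ one recovers the upstairs primitive $\beta\bar u'+\eta w$. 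Composing the transports along the two legs of the strong equivalence reduces everything to $\mathrm{Hom}_{\ZG}(C_*^\ef,\pi)$, where deciding solvability and producing $\bar u'$ is a finite linear-algebra computation over $\pi$ (Smith normal form when $\pi$ is free abelian, and the obvious analogue for a general finitely generated $\pi$, which is what occurs in all applications). Transporting $\bar u'$ back and setting $\bar\ell\defeq\bar\ell_0+\bar u$ yields the diagonal, and converting $\bar\ell$ through the isomorphism above produces the simplicial map $X\to E(\pi,\thedim)$.

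\textbf{Main obstacle.}
Nothing here is deep; the effort goes into bookkeeping of two kinds. One must verify that the natural isomorphisms genuinely identify the geometric square, \emph{including} the upper-triangle and ``vanishing on $A$'' conditions, with the cochain-level picture; and one must check that effective homology of the chain-level pair $(X,A)$ really yields, algorithmically, the ability to solve $\delta\bar u=w$ in the equivariant relative cochain complex, i.e.\ that the dualization of the strong equivalence and the composition of the two transports stay inside the effective framework and remain computable. The cochain reformulation is precisely what makes both of these routine rather than delicate.
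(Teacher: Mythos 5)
Your proposal is correct and follows essentially the same route as the paper: extend $c$ by zero on the complementary distinguished basis, reduce to solving the inhomogeneous equation $\delta\bar u = z-\delta\widetilde c$ in equivariant relative cochains, and transport that equation across the strong equivalence $C_*(X,A)\LRa C_*^\ef$ using exactly the reduction formulas (the paper's $c_0'=c_0\beta$ and $c_0=z_0\eta+c_0'\alpha$, which are your transports written with the dualized reduction's structure maps). The only cosmetic difference is that the paper phrases the first step as subtracting a known solution of a modified problem via additivity of the Eilenberg--MacLane fibration, which is the same computation as your ansatz $\bar\ell=\bar\ell_0+\bar u$.
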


Let us remark that although our main result, Theorem~\ref{thm:main_theorem},
assumes $X$ finite, we will need to use the lemma
for infinite simplicial sets~$X$, and then the effective homology
assumption for $(X,A)$ is important.

\begin{proof}
Thinking of $c$ as a cochain in $C^\thedim(A;\pi)^G$, we extend it to a cochain on $X$ by mapping all $\thedim$-simplices not in $A$ to zero. This prescribes a map $\widetilde c \colon X\ra E(\pi,\thedim)$ that is a solution of the lifting-extension problem from the statement for $z$ replaced by $\delta\widetilde c$. Since the lifting-extension problems and their solutions are additive, one may subtract this solution from the previous problem
and obtain an equivalent lifting-extension problem
\[\xymatrix@C=30pt{
A \ar[r]^-{0} \ar@{ >->}[d] & E(\pi,\thedim) \ar@{->>}[d]^\delta \\
X  \ar[r]_-{z-\delta\widetilde c} \ar@{-->}[ru]^-{c_0} & K(\pi,\thedim+1)
}\]
A solution of this problem is an (equivariant) relative cochain $c_0$ whose coboundary is $z_0=z-\delta\widetilde c$ (this $c_0$ yields a solution $\widetilde c+c_0$ of the original problem). If $C_*(X,A)$ is effective, then such a $c_0$ is computable whenever it exists (and it always exists in the case $H^{\thedim+1}_G(X,A;\pi)=0$).

However, $C_*(X,A)$ itself is not effective in general,
it is only strongly equivalent to an effective complex.
Thus, we need to check that the computability of a preimage under $\delta$
is preserved under reductions in both directions.
Let $(\alpha,\beta,\eta)\col C_*\Ra C'_*$ be a reduction. First,
let us suppose that $z_0'\col C'_*\ra\pi$ is a cocycle
with $z_0'\alpha=\delta c_0$. Then
\[z_0'=z_0'\alpha\beta=(\delta c_0)\beta=\delta(c_0\beta),\]
and we may set $c_0'=c_0\beta$. Next, suppose that $z_0\col C_*\ra\pi$ is
a cocycle with $z_0\beta=\delta c_0'$. Then
\[z_0=z_0(\partial\eta+\eta\partial+\beta\alpha)=z_0\eta\partial+\delta c_0'\alpha=\delta(z_0\eta+c_0'\alpha),\]
and we may set $c_0=z_0\eta+c_0'\alpha$.
\end{proof}

\section{Moore--Postnikov tower}\label{sec:Moore_Postnikov}

We recall that we defined $Y'$ by factoring $\psi$ as a composition $Y\cof[\sim] Y'\fib[\psi']B$ of a weak homotopy equivalence followed by a fibration; such a factorization exists by Lemma~\ref{l:fibrant_replacement}. Using this approximation, $[X,Y]^A_B$ was defined as the set of homotopy classes $[X,Y']^A_B$. In order to compute this set, we approximate $Y'$ by the Moore--Postnikov tower of $Y$ over $B$. Then the computation will proceed by induction over the stages of this tower, as will be explained in Section~\ref{s:main_proofs}. For now, we give a definition of an equivariant Moore--Postnikov tower of a simplicial map $\psi\col Y\to B$ and review some of the statements of the last section in the context of this tower. The actual construction of the tower, when both simplicial sets $Y$ and $B$ are equipped with effective homology, will be carried out later in Section~\ref{sec:MP_tower_proof}.

\begin{definition}
Let $\psi\col Y\to B$ be a map. A (simplicial)
\emph{extended Moore--Postnikov tower} for $\psi$ is a commutative diagram
\[\xymatrix{
& & {} \ar@{.}[d] \\
& & \Pnew \ar[d]^-{\pn} \ar@/^2.5pc/[ddd]^-{\psin} \\
& & \Pold \ar@{.}[d] \\
Y \ar[uurr]^-{\varphin} \ar[urr]_-{\varphi_{\thedim-1}} \ar[rr]_-{\varphi_1} \ar[drr]_-{\rightbox{\scriptstyle\psi={}}{\scriptstyle\varphi_0}}
& & P_{1} \ar[d]^-{p_1} \\
& & \leftbox{P_{0}}{{}=B}
}\]
satisfying the following conditions:
\begin{enumerate}[labelindent=.5em,leftmargin=*,label=\arabic*.]
\renewcommand{\theenumi}{\arabic{enumi}}
\item\label{MP1}
The induced map $\varphinst\col \pi_\thedimm(Y)\to \pi_{\thedimm}(\Pnew)$ is an isomorphism for $\thedimm\leq\thedim$ and an epimorphism for $\thedimm=\thedim+1$.

\item\label{MP2}
The induced map $\psinst \colon \pi_\thedimm(\Pnew)\to\pi_\thedimm(B)$ is an isomorphism for $\thedimm\ge\thedim+2$ and a monomorphism for $\thedimm=\thedim+1$.

\item\label{MP3}
The map $\pn\col\Pnew\to\Pold$ is a Kan fibration induced by a map
\[\knp\col\Pold\to K(\pin,\thedim+1)\]
for some $\ZG$-module $\pin$, i.e.\ there exists a pullback square
\[\xymatrix@C=30pt{
\Pnew \pb \ar[r]^-\qnp \ar[d]_-\pn & E(\pin,\thedim) \ar[d]^{\delta} \\
\Pold \ar[r]_-\knp & K(\pin,\thedim+1)
}\]
identifying $\Pnew$ with the pullback $\Pold\times_{K(\pin,\thedim+1)}E(\pin,\thedim)$. Alternatively, one may identify $\Pnew$ as the principal twisted cartesian product $\Pnew\times_\tau K(\pin,\thedim)$ -- this will be used to equip $\Pnew$ with effective homology.
\end{enumerate}

A \emph{Moore--Postnikov tower} for $\psi$ is then obtained from the extended Moore--Postnikov tower by removing the space $Y$ and the maps $\varphin$.

Both variants admit \emph{$\theDim$-truncated} versions comprised only of stages $\Pnew$ with $\thedim \leq \theDim$.
\end{definition}

We remark that the axioms imply $\pin\cong\pin F$, where $F$ is the homotopy fibre of $Y\ra B$, i.e.\ the fibre of $Y'\to B$.

\begin{definition}
We say that an extended Moore--Postnikov tower has \emph{effective homology} if $Y$ and all the stages $\Pnew$ have effective homology and all the maps $\varphin$, $\pn$, $\qnp$, $\knp$ are computable. There are similar notions for a Moore--Postnikov tower and for $\theDim$-truncated versions of both variants.
\end{definition}

We remark that it is also possible to compute the homotopy groups $\pin$ from the effective homology of a Moore--Postnikov tower as homology groups $H_{n+1}(\cone \pnst)$ of the mapping cone of $\pnst \colon C_*(\Pnew) \to C_*(\Pold)$, see the proof of Theorem~\ref{t:MP_tower}.

The reason to have various versions of Moore--Postnikov towers is to specify the objects that we construct, equip with effective homology etc. Concretely, a Moore--Postnikov tower for $\psi \colon Y \to B$ is also a Moore--Postnikov tower for the replacement $\psi' \colon Y' \to B$. They are different as extended Moore--Postnikov towers and, in fact, we will be able to equip the former with effective homology, while we do not know of a way of doing the same for the latter (because of the space $Y'$). Another example is Addendum~\ref{a:MP_tower_pullback}.

\newcommand{\MPt}
{There is an algorithm that, given a map $\psi\col Y\to B$ between simply connected simplicial sets with effective homology and an integer $\theDim$, constructs an $\theDim$-truncated extended Moore--Postnikov tower for $\psi$ and equips it with effective homology.}
\begin{theorem}\label{t:MP_tower}
\MPt
\end{theorem}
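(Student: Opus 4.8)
The plan is to construct the tower one stage at a time, realizing algorithmically the classical construction of a single Moore--Postnikov section --- killing $\pi_\thedim$ of the homotopy fibre by a principal Eilenberg--MacLane fibration --- within the framework of equivariant effective homology. This follows the non-equivariant construction of \cite{polypost} almost verbatim, with simplicial sets replaced by their equivariant effective-homology versions and ordinary cohomology replaced throughout by the equivariant cohomology $H^*_G$ of Section~\ref{sec:equiv_cohomology}; the only genuinely new ingredient is Theorem~\ref{t:vokrinek} (together with \cite[Corollary~12]{Filakovsky}), which furnishes the \emph{equivariant} effective homology of each stage. The base stage is $P_0=B$, $\varphi_0=\varphi$ with trivial $k$-invariant; since $Y$ and $B$ are simply connected and the homotopy fibre is $\theconn$-connected one may in fact take $P_\thedimm=B$, $\varphi_\thedimm=\varphi$, $\kip$ trivial for $\thedimm\le\theconn$, so assume $\thedim>\theconn$ and that the first $\thedim-1$ stages have been built, $\Pold$ being a simply connected simplicial set with a free $G$-action and equivariant effective homology and $\varphi_{\thedim-1},\kip,p_\thedimm$ computable.

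For the inductive step I would first replace $\varphi_{\thedim-1}\col Y\to\Pold$ by an equivariant Kan fibration $\widetilde\varphi\col\widetilde Y\fib\Pold$ (Lemma~\ref{l:fibrant_replacement}) and form its homotopy fibre $F$; the standard perturbation-lemma construction of a homotopy-fibre sequence then equips $\widetilde Y$ and $F$ with effective homology. By conditions~\ref{MP1}--\ref{MP2} for stage $\thedim-1$ and the long exact homotopy sequence of $\widetilde\varphi$, the fibre $F$ is $(\thedim-1)$-connected, so the Hurewicz theorem gives $\pi_\thedim(F)\cong H_\thedim(F)$; this is a finitely generated $\ZG$-module --- carrying the standard monodromy action, equivalently that of the homotopy fibre of the induced map $Y/G\to\Pold/G$, acted on by $\pi_1(\Pold/G)\cong G$ --- and it is computable from the effective homology of the fibration. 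Set $\pin\defeq H_\thedim(F)$, so that $\pin\cong\pin F$ as in the remark following the definition of the tower.

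The heart of the step, and the part I expect to be the main obstacle, is to produce not merely a cohomology class but an explicit computable cocycle $\knp\in Z^{\thedim+1}_G(\Pold;\pin)$ --- equivalently a computable map $\knp\col\Pold\to K(\pin,\thedim+1)$ --- representing the $k$-invariant. Since $F$ is $(\thedim-1)$-connected, the universal-coefficient isomorphism (using $\widetilde H_{<\thedim}(F)=0$) produces a fundamental cocycle on $F$ with coefficients in $\pin$, representing $\id_{\pin}$; it is computed from the effective homology of $F$ via a splitting as in Proposition~\ref{prop:projectivity}, and as a map $F\to K(\pin,\thedim)$ it induces an isomorphism on $\pi_\thedim$. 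The $k$-invariant is then the \emph{transgression} of this fundamental cocycle in the Serre spectral sequence of $\widetilde\varphi$ --- a well-defined class in $H^{\thedim+1}_G(\Pold;\pin)$, because in the range of total degree $\le\thedim+1$ that spectral sequence is so concentrated ($F$ being $(\thedim-1)$-connected and $\Pold$ simply connected) that the only differential touching the fibre edge $E^{0,\thedim}$ or the base edge $E^{\thedim+1,0}$ is the transgression $d_{\thedim+1}$. Given the effective homology of $\widetilde Y$ and $\Pold$, this transgression and a cocycle $\knp$ representing it are computed by finite linear algebra over $\bbZ$ (Smith normal form) on the relevant finitely generated cochain complexes $\operatorname{Hom}_{\ZG}(C_*^\ef(-),\pin)$; making this ``effective transgression'' work is the technical core of the construction, just as in the non-equivariant case.

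Finally, set $\Pnew\defeq\Pold\times_{K(\pin,\thedim+1)}E(\pin,\thedim)$ with $\pn$ the projection; this is the principal twisted cartesian product $\Pold\times_\tau K(\pin,\thedim)$ determined by $\knp$, hence carries a free $G$-action, and by \cite[Corollary~12]{Filakovsky} and Theorem~\ref{t:vokrinek} it inherits equivariant effective homology from that of $\Pold$ and $K(\pin,\thedim)$; it is simply connected since $K(\pin,\thedim+1)$ ($\thedim\ge2$) and $E(\pin,\thedim)$ are. By construction $\varphi_{\thedim-1}^*[\knp]=0$ in $H^{\thedim+1}_G(Y;\pin)$ (the transgression pulls back to zero on the total space), so $\knp\circ\varphi_{\thedim-1}=\delta c$ for a computable cochain $c\in C^\thedim_G(Y;\pin)$, and feeding $(\varphi_{\thedim-1},c)$ into the universal property of the pullback --- equivalently, applying Lemma~\ref{l:lift_ext_one_stage} over $Y$ --- yields a computable equivariant lift $\varphin\col Y\to\Pnew$ with $\pn\varphin=\varphi_{\thedim-1}$; one may arrange that it restricts to the fundamental cocycle on the fibre. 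Condition~\ref{MP3} holds by construction, and conditions~\ref{MP1}--\ref{MP2} for $\varphin$ follow from a five-lemma comparison of the long exact homotopy sequences of $F\to\widetilde Y\to\Pold$ and $K(\pin,\thedim)\to\Pnew\to\Pold$, using that the induced map of fibres is a $\pi_\thedim$-isomorphism --- a routine diagram chase carrying no algorithmic content. Iterating this step $\thedim$ times produces the first $\thedim$ stages, with all the asserted computability and effective homology.
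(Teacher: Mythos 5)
Your overall skeleton---induction over stages, $\Pnew$ defined as the pullback of $\delta\col E(\pin,\thedim)\to K(\pin,\thedim+1)$ along a computable $k$-invariant, effective homology of the new stage via the twisted cartesian product, \cite[Corollary~12]{Filakovsky} and Theorem~\ref{t:vokrinek}, and a five-lemma verification of conditions~\ref{MP1} and~\ref{MP2}---matches the paper. But the central algorithmic step is where you diverge, and there is a genuine gap there. You propose to (a) replace $\varphi_{\thedim-1}$ by a Kan fibration $\widetilde\varphi\col\widetilde Y\fib\Pold$ and equip $\widetilde Y$ and its fibre $F$ with effective homology, (b) set $\pin=H_\thedim(F)$, and (c) obtain $\knp$ as the transgression of the fundamental cocycle of $F$. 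None of (a)--(c) is justified as stated. The fibrant replacement of Lemma~\ref{l:fibrant_replacement} is produced by the small object argument and is only locally effective; nothing equips it, let alone its fibre, with effective homology, and passing from effective homology of base and total space to that of the fibre is a nontrivial inverse problem that the paper never solves or needs. Likewise the claim that ``the transgression is computed by finite linear algebra on the effective cochain complexes'' is exactly the step that would require an effective model of the Serre filtration; you flag it yourself as ``the technical core'' and then leave it unproved.

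The paper's proof avoids the fibre and the spectral sequence entirely. It forms the algebraic mapping cone $\cone\varphi_{(\thedim-1)*}$, which is a perturbation of $C_*\Pold\oplus C_*Y$ and therefore inherits effective homology from $\Pold$ and $Y$ by the perturbation lemmas (\cite[Proposition~3.8]{polypost}); it defines $\pin\defeq H_{\thedim+1}(C_*^\ef)$ of this cone (equal to $\pi_\thedim(F)$ by relative Hurewicz, but no model of $F$ is ever built), computes the composite $C_{\thedim+1}^\ef\to Z_{\thedim+1}(C_*^\ef)\to\pin$ using the retraction of Proposition~\ref{prop:projectivity}, and pulls it back to a single computable cocycle $\kappa+\lambda$ on $\bigl(\cone\varphi_{(\thedim-1)*}\bigr)_{\thedim+1}=C_{\thedim+1}\Pold\oplus C_\thedim Y$. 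Its two components \emph{are} the $k$-invariant $\knp$ and the map $l_\thedim\col Y\to E(\pin,\thedim)$, with the commutativity $\delta l_\thedim=\knp\varphi_{\thedim-1}$ and the correct behaviour of $\varphin=(\varphi_{\thedim-1},l_\thedim)$ on $\pi_\thedim$ built in. By contrast, in your version an arbitrary lift of $\varphi_{\thedim-1}$ produced by Lemma~\ref{l:lift_ext_one_stage} need not restrict to the fundamental class on the fibre, and you give no algorithm to ``arrange'' that it does. To repair your argument you would have to supply the effective homology of the fibre and an effective transgression; the mapping-cone device is precisely how the paper sidesteps both.
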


The proof of the theorem, as well as its addendum below, is postponed to Section~\ref{sec:MP_tower_proof}.

\begin{addendum}\label{a:MP_tower_pullback}
There is an algorithm that, given the data of the theorem and a computable map $\beta \colon \tB \to B$ whose domain $\tB$ has effective homology, constructs an $\theDim$-truncated Moore--Postnikov tower with stages $\tPnew = \tB \times_B \Pnew$ and equips it with effective homology.
\end{addendum}

We remark that the $\tPnew$ form a Moore--Postnikov tower for the natural map $\widetilde Y = \widetilde B \times_B Y' \to \widetilde B$ from the homotopy pullback $\widetilde Y$ of $Y$ along $\beta$, but we do not know of a way of dealing effectively with $\widetilde Y$. This is the reason why we are \emph{not} able to equip the \emph{extended} Moore--Postnikov tower for $\widetilde Y \to \tB$ with effective homology.

We obtain a new lifting-extension problem from the Moore--Postnikov tower for $\psi$
\[\xymatrix@C=40pt{
A \ar[r]^-{\fn} \ar@{ >->}[d] & \Pnew \ar@{->>}[d]^-{\psin} \\
X  \ar[r]_-g \ar@{-->}[ru] & B
}\]
where $\fn=\varphin f$. The following theorem explains the role of the Moore--Postnikov tower in our algorithm.

\newcommand{\nequiv}
{There exists a map $\varphinp\col Y'\to\Pnew$ inducing a bijection $\varphinpst\col[X,Y']^A_B\to [X,\Pnew]^A_B$ for every $\thedim$-dimensional simplicial set $X$ with a free action of $G$.}
\begin{theorem}\label{t:n_equivalence}
\nequiv
\end{theorem}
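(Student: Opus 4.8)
The plan is to show that the Moore--Postnikov tower gives a "good enough" approximation of $Y'$, so that mapping into $\Pnew$ sees the same homotopy information as mapping into $Y'$ — in the relevant range of dimensions. The map $\varphinp\col Y'\to\Pnew$ will be constructed by the defining property of the tower: the maps $\varphi_\thedimm$ are defined on $Y$, but since each $P_\thedimm$ is a Kan complex (being built from fibrations over the Kan complex $B=Y'$\,\dots\ wait, over $B$, which we may assume Kan) and $j\col Y\to Y'$ is an acyclic cofibration, the composite $Y\xra{\varphin}\Pnew$ extends along $j$ to a map $\varphinp\col Y'\to\Pnew$; equivalently, one builds the Moore--Postnikov tower of $\varphi'\col Y'\fib B$ instead of $\varphi$ and observes the two towers agree (they have the same homotopy fibre $F$, the same $\ZG$-modules $\pin$, and are related by a levelwise weak equivalence). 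In fact the cleanest route is to take the Moore--Postnikov tower of the \emph{fibration} $\varphi'$ from the start, so that $Y'\to\Pnew$ is literally part of the tower diagram and $\psin\varphinp=\varphi'$.

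Next I would verify that $\varphinp\col Y'\to\Pnew$ is an $(\thedim+1)$-equivalence over $B$, meaning: it induces an isomorphism on $\pi_\thedimm$ of all fibres for $\thedimm\le\thedim$ and an epimorphism for $\thedimm=\thedim+1$. This is exactly the content of axioms \ref{MP1} and \ref{MP2} combined: comparing the long exact sequences of the fibrations $F\to Y'\to B$ and $F_\thedim\to\Pnew\to B$ (where $F_\thedim$ is the fibre of $\psin$), one gets that the induced map $F\to F_\thedim$ on homotopy fibres is an $(\thedim+1)$-equivalence — $\pi_\thedimm F\to\pi_\thedimm F_\thedim$ is iso for $\thedimm\le\thedim$, onto for $\thedimm=\thedim+1$. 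Indeed $F_\thedim$ is just the $\thedim$-th Postnikov stage of $F$.

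The key step is then the obstruction-theoretic argument showing that an $(\thedim+1)$-equivalence of fibrations over $B$ induces a bijection on relative-fibrewise homotopy classes of maps out of an $\thedim$-dimensional complex. Concretely: given the lifting-extension data $(A\hookrightarrow X, f, g)$ with $\dim X\le\thedim$, I claim $\varphinpst\col[X,Y']^A_B\to[X,\Pnew]^A_B$ is onto and one-to-one. Surjectivity: given a solution $\ell'\col X\to\Pnew$, one lifts it through $\varphinp$ cell by cell over the skeleta of $X$ relative to $A$; the obstruction to extending a partial lift over a $k$-cell ($k\le\thedim$) lies in $\pi_{k-1}$ of the homotopy fibre of $F\to F_\thedim$, which vanishes for $k-1\le\thedim$, hence always. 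Injectivity is the same argument applied to the lifting-extension problem for $(\stdsimp1\times X, (\partial\stdsimp1\times X)\cup(\stdsimp1\times A))$, whose domain has dimension $\le\thedim+1$; here the obstructions to homotoping two lifts live in $\pi_k$ of that homotopy fibre for $k\le\thedim+1$, and since $F\to F_\thedim$ is an $(\thedim+1)$-equivalence its homotopy fibre is $\thedim$-connected, so those obstructions vanish too. Equivalently, this is the standard fact that if $p\col E\to E'$ is an $(\thedim+1)$-equivalence of fibrations over $B$ then $p_*\col[X,E]^A_B\to[X,E']^A_B$ is bijective for $\dim X\le\thedim$ — a relative, fibrewise version of the classical $\pi$-equivalence lemma.

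The main obstacle I expect is purely bookkeeping: setting up the fibrewise/relative obstruction theory cleanly (choosing a section or basepoint in each fibre is \emph{not} available here, which is precisely the complication flagged in the introduction), and making sure the "homotopy fibre of $F\to F_\thedim$" statements are phrased so they don't secretly need simple-connectivity or a chosen zero. One way to sidestep the missing basepoint is to argue simplicially: $\varphinp$ is a Kan fibration (replace it by one if necessary) whose fibre is $\thedim$-connected, and then the extension/lifting results follow from the simplicial approximation / relative Kan lifting property applied skeleton-by-skeleton to the pair $(X,A)$, using only that an $\thedim$-connected Kan complex has trivial relative homotopy in dimensions $\le\thedim$. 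This avoids homotopy groups with twisted coefficients entirely and keeps everything natural in the data, which is what we need downstream.
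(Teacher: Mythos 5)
Your overall skeleton matches the paper's proof: construct $\varphinp$ by lifting along the acyclic cofibration $j\col Y\cof[\sim]Y'$, observe it is an $(\thedim+1)$-equivalence, and reduce the theorem to the general statement that an $(\thedim+1)$-equivalence over $B$ induces a bijection on $[X,-]^A_B$ for $\dim X\le\thedim$, proved by skeletal induction over free $G$-cells (where freeness reduces each step to a non-equivariant filling problem against a Kan fibration). Two points diverge. First, a repairable slip in the construction of $\varphinp$: the stages $P_\thedimm$ need not be Kan complexes, since $B$ is a finite simplicial set and is not assumed fibrant (a finite $1$-connected Kan complex would be essentially trivial, as the paper notes); so one cannot extend $\varphin$ along $j$ by fibrancy of $\Pnew$. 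The paper instead lifts stage by stage, solving the square with $j$ on the left and the Kan fibration $\pn\col\Pnew\fib\Pold$ on the right, starting from $\varphi'\col Y'\to B=P_0$; your construction should be phrased this way (your alternative of rebuilding the tower from $\varphi'$ would also need an argument identifying it with the given tower, which again amounts to the same lifting). Second, your injectivity argument is genuinely different from the paper's: you run the classical obstruction argument on the cylinder pair $(\stdsimp1\times X,(\partial\stdsimp1\times X)\cup(\stdsimp1\times A))$, of dimension $\thedim+1$, using that the homotopy fibre of $\varphinp$ is $\thedim$-connected (here note an off-by-one in your prose: the obstruction over an $m$-cell lies in $\pi_{m-1}$ of that homotopy fibre, so the groups needed are $\pi_k$ for $k\le\thedim$, not $k\le\thedim+1$; with the correct indexing the vanishing you invoke does suffice). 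The paper avoids the cylinder altogether: the preimages of a class $[\ell]$ form $[X,Q]^A_P$, which after pulling back along $\ell$ becomes $[X,\ell^*Q]^A_X$; the pulled-back fibration has a section which is an $\thedim$-equivalence, and the already-proved surjectivity applied to that section gives $[X,\ell^*Q]^A_X\cong[X,X]^A_X=\ast$. The paper's route buys a uniform reuse of the surjectivity step and sidesteps both the extra obstruction bookkeeping and the need to replace $\varphinp$ by a fibration over $\Pnew$ (your route should say a word about why that fibrant replacement does not change $[X,Y']^A_B$); your route is the more standard textbook argument and works equally well here, since $\varphinp$ is an $(\thedim+1)$-equivalence and the vanishing of the relevant homotopy groups is all that is used, with no basepoint or local-coefficient coherence required.
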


The theorem should be known but we could not find an equivariant fibrewise version anywhere. For this reason, we include a proof in Section~\ref{sec:n_equivalence_proof}.

From the point of view of Theorem~\ref{thm:main_theorem}, we have reduced the computation of $[X,Y]^A_B=[X,Y']^A_B$ to that of $[X,\Pnew]^A_B$, where $\thedim=\dim X$. Before going into details of this computation, we present a couple of results that are directly related to the Moore--Postnikov tower. They will be essential tools in the proof of Theorem~\ref{thm:main_theorem}.

\heading{Inductive construction of diagonals}

We slightly reformulate Lemma~\ref{l:lift_ext_one_stage} in terms of the Moore--Postnikov tower in the following proposition, which works for stages of a Moore--Postnikov tower.

\begin{proposition} \label{prop:lift_ext_one_stage}
There is an algorithm that, given a diagram
\[\xymatrix@C=40pt{
A \ar[r]^-f \ar@{ >->}[d] & \Pnew \ar@{->>}[d]^-{\pn} \\
X  \ar[r]_-g \ar@{-->}[ru] & \Pold
}\]
where the pair $(X,A)$ is equipped with
effective homology, decides whether a diagonal exists.
If it does, it computes one.

When $H^{\thedim+1}_G(X,A;\pin)=0$, a diagonal exists for every
 $f$  and~$g$.
\end{proposition}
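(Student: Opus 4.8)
The plan is to reduce the statement directly to Lemma~\ref{l:lift_ext_one_stage} using the pullback description of $\pn$ provided by axiom~\ref{MP3} of the Moore--Postnikov tower. Write $\iota\col A\cof X$ for the inclusion; the given square commutes, i.e.\ $\pn f=g\iota$. By~\ref{MP3}, $\Pnew$ is identified with the pullback $\Pold\times_{K(\pin,\thedim+1)}E(\pin,\thedim)$, and in particular comes equipped with a computable projection $\pr\col\Pnew\to E(\pin,\thedim)$ satisfying $\delta\circ\pr=\knp\circ\pn$. I would set $c=\pr f\col A\to E(\pin,\thedim)$ and $z=\knp g\col X\to K(\pin,\thedim+1)$; these are computable since $f$, $g$ are computable by hypothesis and $\pr$, $\knp$ are computable parts of the tower (Theorem~\ref{t:MP_tower}).

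First I would check that diagonals in the given square correspond bijectively to diagonals in the Eilenberg--MacLane square
\[\xymatrix@C=30pt{
A \ar[r]^-c \ar@{ >->}[d]_-\iota & E(\pin,\thedim) \ar@{->>}[d]^\delta \\
X  \ar[r]_-z \ar@{-->}[ru]^-m & K(\pin,\thedim+1)
}\]
which itself commutes because $\delta c=\delta\pr f=\knp\pn f=\knp g\iota=z\iota$. Indeed, by the universal property of the pullback, a map $\ell\col X\to\Pnew$ with $\pn\ell=g$ is the same as a map $m=\pr\ell\col X\to E(\pin,\thedim)$ with $\delta m=\knp g=z$; and for such an $\ell$ the remaining condition $\ell\iota=f$ is equivalent to the pair of conditions $\pn\ell\,\iota=\pn f$ and $\pr\ell\,\iota=\pr f$, the first of which holds automatically (it reads $g\iota=\pn f$), the second being exactly $m\iota=c$. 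Conversely a diagonal $m$ of the Eilenberg--MacLane square pairs with $g$ to a diagonal $\ell=(g,m)\col X\to\Pnew$ of the original square.

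It then remains to apply Lemma~\ref{l:lift_ext_one_stage} to this Eilenberg--MacLane square: the pair $(X,A)$ is equipped with effective homology by assumption, so the lemma's algorithm decides whether $m$ exists and computes one if so, and combining with $g$ produces the desired diagonal $\ell$. Moreover the fibration $\delta\col E(\pin,\thedim)\to K(\pin,\thedim+1)$ has coefficient module exactly $\pin$, so Lemma~\ref{l:lift_ext_one_stage} yields that $m$, and hence $\ell$, exists for all $c$ and $z$, hence for all $f$ and $g$, whenever $H^{\thedim+1}_G(X,A;\pin)=0$.

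I do not expect a genuine obstacle here; the only step needing a little care is the verification of the bijection between diagonals of the two squares, specifically that the condition $\ell\iota=f$ is faithfully encoded by $m\iota=c$ — this relies on the hypothesis $\pn f=g\iota$ already being built into the input square. Everything else is a mechanical unwinding of~\ref{MP3} together with an appeal to Lemma~\ref{l:lift_ext_one_stage}.
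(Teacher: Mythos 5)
Your argument is exactly the paper's: it invokes axiom~\ref{MP3} to identify $\Pnew$ with the pullback $\Pold\times_{K(\pin,\thedim+1)}E(\pin,\thedim)$, observes that diagonals of the given square are precisely pairs $(g,m)$ with $m$ a diagonal of the composite Eilenberg--MacLane square, and then applies Lemma~\ref{l:lift_ext_one_stage}. Your write-up merely spells out the pullback bookkeeping in more detail; there is no substantive difference.
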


\begin{proof}
We will use property (3) of Moore--Postnikov towers, which expresses $\pn$ as a pullback:
\[\xymatrix@C=30pt{
A \ar[r]^-f \ar@{ >->}[d] & \Pnew \pb \ar[r] \ar[d]_-{\pn} & E(\pin,\thedim) \ar@{->>}[d]^\delta \\
X  \ar[r]_-g \ar@{-->}[ru]^{\ell} & \Pold \ar[r]_-{\knp} & K(\pin,\thedim+1)
}\]
Thus, diagonals $\ell$ are exactly of the form
$(g,c)\col X\ra \Pold\times_{K(\pin,\thedim+1)}E(\pin,\thedim)$,
where $c\col X\to E(\pin,\thedim)$
is an arbitrary diagonal in the composite square
 and thus computable by Lemma~\ref{l:lift_ext_one_stage}.
\end{proof}

We obtain two important consequences as special cases. The first one is an algorithmic version of lifting homotopies across $\Pnew\fib\Pm$.

\begin{proposition}[homotopy lifting/extension] \label{prop:homotopy_lifting}
Given a diagram
\[\xymatrix{
(\vertex\thedimm\times X)\cup(\stdsimp{1}\times A) \ar[r] \ar@{ >->}[d]_-\sim & \Pnew \ar@{->>}[d] \\
\stdsimp{1}\times X  \ar[r] \ar@{-->}[ru] & P_\theotherdim
}\]
where $\thedimm\in\{0,1\}$ and $(X,A)$ is equipped with
effective homology, it is possible to compute a diagonal.
In other words, one may lift and extend homotopies in Moore--Postnikov towers
algorithmically.
\end{proposition}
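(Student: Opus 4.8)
The plan is to deduce Proposition~\ref{prop:homotopy_lifting} directly from Proposition~\ref{prop:lift_ext_one_stage} by an inductive ``telescope'' argument over the stages of the Moore--Postnikov tower, together with a single application of the fact that the pair $\bigl(\stdsimp1\times X,(\vertex\thedimm\times X)\cup(\stdsimp1\times A)\bigr)$ is equipped with effective homology. First I would observe that the latter follows from Proposition~\ref{prop:relative_product}: writing $(\stdsimp1,\vertex\thedimm)$ for the obvious cofibrant pair of finite simplicial sets and using that $(X,A)$ has effective homology, the product of pairs $(\stdsimp1,\vertex\thedimm)\times(X,A)=\bigl(\stdsimp1\times X,(\vertex\thedimm\times X)\cup(\stdsimp1\times A)\bigr)$ has effective homology; in particular its relative chain complex is strongly equivalent to an effective one, which is exactly the hypothesis needed to invoke Proposition~\ref{prop:lift_ext_one_stage}. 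Set $X'=\stdsimp1\times X$ and $A'=(\vertex\thedimm\times X)\cup(\stdsimp1\times A)$ for brevity.

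The induction runs downward: we are given a map $A'\to\Pnew$ and a map $X'\to P_\theotherdim$ (with $\theotherdim<\thedim$) that agree after composing with the fibration $\Pnew\to P_\theotherdim$, and we want a diagonal $X'\to\Pnew$. Since $\Pnew\to P_\theotherdim$ factors as a composite of one-stage fibrations $P_\theotherdim\leftarrow P_{\theotherdim+1}\leftarrow\cdots\leftarrow\Pnew$, it suffices to lift one stage at a time: given a diagonal $X'\to P_\thedimm$ through the square with fibration $P_\thedimm\to P_{\thedimm-1}$ and compatible map $A'\to P_{\thedimm}$, Proposition~\ref{prop:lift_ext_one_stage} applied to the pair $(X',A')$ (which has effective homology, as just noted) produces a diagonal $X'\to P_{\thedimm}$, which is computable. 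Starting from the given $X'\to P_\theotherdim$ and $A'\to\Pnew$, we successively lift through $P_{\theotherdim+1}, P_{\theotherdim+2}, \dots, \Pnew$, at each step feeding the newly computed diagonal (and the restriction of $A'\to\Pnew$ to the appropriate stage) into the next application. After finitely many steps we arrive at the desired diagonal $X'\to\Pnew$.

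One subtlety to address is commutativity of the intermediate squares: at the step lifting from $P_{\thedimm-1}$ to $P_{\thedimm}$ we must know that the current diagonal $X'\to P_{\thedimm-1}$ and the map $A'\to P_{\thedimm}$ (obtained by composing $A'\to\Pnew$ with $\Pnew\to P_{\thedimm}$) agree over $P_{\thedimm-1}$ on $A'$. This is immediate from the compatibility maintained at the previous step, since the diagonal $X'\to P_{\thedimm-1}$ restricts on $A'$ to the composite $A'\to\Pnew\to P_{\thedimm-1}$ by construction. I would spell this bookkeeping out once and then say ``proceeding inductively'' for the rest. The main obstacle — really the only place requiring thought rather than routine chaining — is verifying that the pair $(\stdsimp1\times X,A')$ carries effective homology so that Proposition~\ref{prop:lift_ext_one_stage} genuinely applies; this is exactly what Proposition~\ref{prop:relative_product} buys us, so once that citation is in place the proof is a short induction. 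Everything else (the factorization of $\pn$ into one-stage fibrations, computability of each lift) is already packaged in Proposition~\ref{prop:lift_ext_one_stage} and property~\ref{MP3} of the tower.
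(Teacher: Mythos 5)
Your strategy is the same as the paper's: equip the pair $\bigl(\stdsimp1\times X,(\vertex\thedimm\times X)\cup(\stdsimp1\times A)\bigr)$ with effective homology via Proposition~\ref{prop:relative_product} and then climb the tower stage by stage with Proposition~\ref{prop:lift_ext_one_stage}. However, there is a genuine gap: Proposition~\ref{prop:lift_ext_one_stage} only \emph{decides} whether a diagonal exists and computes one \emph{if} it does; it guarantees existence only under the hypothesis $H^{\thedimm+1}_G(X',A';\pi_\thedimm)=0$. You apply it at each stage as if it always returned a diagonal, so as written your induction could halt at some stage with the answer ``no diagonal exists,'' and nothing in your argument rules this out. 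The statement of Proposition~\ref{prop:homotopy_lifting} asserts unconditionally that a diagonal can be computed, so you must supply an existence argument.

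The missing ingredient is the observation that the pair $\bigl(\stdsimp1\times X,(\vertex\thedimm\times X)\cup(\stdsimp1\times A)\bigr)$ has vanishing equivariant cohomology in all dimensions, because $\stdsimp1\times X$ admits a (continuous) equivariant deformation onto the subspace $(\vertex\thedimm\times X)\cup(\stdsimp1\times A)$; this is exactly the one sentence the paper's proof adds. With that, the second clause of Proposition~\ref{prop:lift_ext_one_stage} applies at every stage, each lift exists, and your telescope argument (whose compatibility bookkeeping is fine) goes through verbatim. Once you insert this vanishing-cohomology remark, your proof coincides with the paper's.
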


\begin{proof}
It is possible to equip $(\stdsimp{1}\times X,
(\vertex\thedimm\times X)\cup(\stdsimp{1}\times A))$
with effective homology by Proposition~\ref{prop:relative_product}.
Moreover, this pair has zero cohomology since there exists
a (continuous) equivariant deformation of $\stdsimp{1}\times X$
onto the considered subspace. Thus a diagonal can be
constructed by a successive use of Proposition~\ref{prop:lift_ext_one_stage}.
\end{proof}

The second result concerns algorithmic concatenation of homotopies.
Let $\horn21$ denote the first horn in the standard $2$-simplex $\stdsimp2$, i.e.\ the simplicial subset of the standard simplex $\stdsimp2$
spanned by the faces $\bdry_2\stdsimp2$ and $\bdry_0\stdsimp2$.
Given two homotopies $h_2,h_0\col\stdsimp1\times X\to Y$
that are compatible, in the sense that $h_2$ is a homotopy from $\ell_0$ to $\ell_1$ and $h_0$ is a homotopy from $\ell_1$ to $\ell_2$,
one may prescribe a map $\horn21\times X\to Y$ as $h_2$ on
$\bdry_2\stdsimp2\times X$ and as $h_0$ on $\bdry_0\stdsimp2\times X$.
This map has an extension
$H\col \stdsimp2\times X\to Y$ and the restriction of $H$ to
 $\bdry_1\stdsimp2\times X$ gives a homotopy from $\ell_0$ to $\ell_2$,
which can be thought of as a \emph{concatenation} of $h_2$ and $h_0$.
We will need the following effective,
relative and fibrewise version; the proof is entirely analogous
to that of the previous proposition and we omit it.

\begin{proposition}[homotopy concatenation] \label{prop:homotopy_concatenation}
Given a diagram
\[\xymatrix{
(\horn{2}{1}\times X)\cup(\stdsimp{2}\times A) \ar[r] \ar@{ >->}[d]_-\sim & \Pnew \ar@{->>}[d] \\
\stdsimp{2}\times X  \ar[r] \ar@{-->}[ru] & P_\theotherdim
}\]
where $(X,A)$ is equuipped with effective homology, it is possible to
compute a diagonal. In other words, one may concatenate homotopies
in Moore--Postnikov towers algorithmically.
\end{proposition}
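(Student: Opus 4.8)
The plan is to imitate the proof of Proposition~\ref{prop:homotopy_lifting} essentially verbatim, replacing the pair $(\stdsimp1\times X,(\vertex\thedimm\times X)\cup(\stdsimp1\times A))$ by the pair $(\stdsimp2\times X,(\horn21\times X)\cup(\stdsimp2\times A))$. First I would observe that this pair can be equipped with effective homology: it is exactly $(\stdsimp2,\horn21)\times(X,A)$ in the sense of Proposition~\ref{prop:relative_product}, since $(\horn21\times X)\cup(\stdsimp2\times A)$ is the subspace $(\horn21\times X)\cup(\stdsimp2\times A)$ of the product, and both $(\stdsimp2,\horn21)$ (a pair of finite simplicial sets) and $(X,A)$ are equipped with effective homology by hypothesis; hence Proposition~\ref{prop:relative_product} applies.

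Next I would check the cohomological vanishing that makes Proposition~\ref{prop:lift_ext_one_stage} applicable at every stage. The horn inclusion $\horn21\cof\stdsimp2$ is a deformation retract, so there is a (continuous, and indeed simplicial after one subdivision, but continuous is all we need) deformation of $\stdsimp2$ onto $\horn21$ relative to $\horn21$; taking the product with $X$ gives an equivariant deformation of $\stdsimp2\times X$ onto $(\horn21\times X)\cup(\stdsimp2\times A)$ relative to that subspace. Consequently the relative cohomology groups $H^*_G$ of the pair $(\stdsimp2\times X,(\horn21\times X)\cup(\stdsimp2\times A))$ vanish in all dimensions, in particular in dimension $\thedim+1$ with coefficients in $\pin$ for every stage $n$.

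Finally I would assemble the diagonal by induction over the Moore--Postnikov stages between $P_\theotherdim$ and $\Pnew$. Given the partial diagonal into $P_{\thedim-1}$ (lifting the map to $P_\theotherdim$ together with the prescribed values on the subspace), Proposition~\ref{prop:lift_ext_one_stage}, applied to the pair $(\stdsimp2\times X,(\horn21\times X)\cup(\stdsimp2\times A))$ which is equipped with effective homology and has $H^{\thedim+1}_G=0$, produces a lift to $\Pnew$; starting at stage $\theotherdim$ and iterating up to stage $\thedim$ yields the desired diagonal $\stdsimp2\times X\to\Pnew$. Since each step is algorithmic and there are finitely many stages, the whole construction is algorithmic. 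I do not expect any genuine obstacle here: the only points requiring care are the two bookkeeping facts just described (effective homology of the product pair, and the vanishing of its relative cohomology via the horn deformation retraction), and both are immediate; this is exactly why the paper says ``the proof is entirely analogous to that of the previous proposition and we omit it.''
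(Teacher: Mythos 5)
Your proposal is correct and is exactly the argument the paper intends: equip the pair $(\stdsimp2\times X,(\horn21\times X)\cup(\stdsimp2\times A))$ with effective homology via Proposition~\ref{prop:relative_product}, observe that its equivariant cohomology vanishes because the subspace is an equivariant deformation retract, and then apply Proposition~\ref{prop:lift_ext_one_stage} stage by stage from $P_\theotherdim$ up to $\Pnew$. The only cosmetic slip is that the product deformation $D_t\times\id_X$ is not relative to the subspace (points of $\stdsimp2\times A$ move), but it does preserve the subspace at every time and retracts onto $\horn21\times X$, which is all one needs to conclude the vanishing of $H^*_G$ of the pair.
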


\section{Computing homotopy classes of maps}\label{s:main_proofs}

In this section, we prove Theorems~\ref{theorem:equivariant} and~\ref{thm:main_theorem}. First, we explain our computational model for abelian groups, since these are one of our main computational objects and also form the output of our algorithms.

There are two levels of these computational models: 
\emph{semi-effective} and \emph{fully effective} abelian groups.
They are roughly analogous  to locally effective chain complexes and effective ones. There is, however, one significant difference: while an element
of a chain complex is assumed to have a unique computer representation,
a single element of a semi-effective abelian group may have many
different representatives. We can perform the group operations
in terms of the representatives but, in general, we cannot decide whether
two representatives represent the same group element.
This setting is natural
when working with elements of $[X,\Pnew]^A_B$, i.e.\ homotopy classes of diagonals. The representatives are simplicial maps
$X\to\Pnew$, and at first, we will not be able to decide whether
two given such maps are homotopic. 

Given a semi-effective abelian group, it is not possible to compute its isomorphism type (even when it is finitely generated); for this we need additional information, summarized in the notion of a fully effective abelian group. 
A semi-effective abelian group can be made fully effective provided
that it is a part of a suitable exact sequence, additionally provided
with set-theoretic sections; this is described in Lemma~\ref{l:ses}.

This suggests a computation of $[X,\Pnew]^A_B$ in two steps. First, in Theorem~\ref{t:semi_eff}, we endow it with a structure of a semi-effective abelian group (whose addition comes from the weak \Hopf space structure on $\Pnew$ constructed later in Section~\ref{sec:H_space_constr}). Next, we promote it to a fully effective abelian group by relating it to $[X,\Pold]^A_B$ and $[\stdsimp 1 \times X,\Pold]^{(\partial\stdsimp 1 \times X) \cup (\stdsimp 1 \times A)}_B$ through a long exact sequence of Theorem~\ref{thm:exact_sequence_long} and using induction.

We note that the long proofs of Theorems~\ref{t:semi_eff}~and~\ref{thm:exact_sequence_long} are postponed to later sections. This enables us to complete the proof of the main Theorem~\ref{thm:main_theorem} in the present section.

\heading{Operations with abelian groups}\label{s:abelops}

This subsection is a short summary of a detailed discussion found in \cite{CKMSVW11}; results not included there are proved.

In our setting, an abelian group $A$ is represented by a set $\mcA$, whose elements are called \emph{representatives}; we also assume that the representatives have a finite encoding by bit strings. For $\alpha\in\mcA$, let $[\alpha]$ denote the element of $A$ represented by $\alpha$. The representation is generally non-unique; we may have $[\alpha]=[\beta]$ for $\alpha\ne\beta$.

We call $A$ represented in this way \emph{semi-effective}, if algorithms for the following three tasks are available: provide an element $o\in\mcA$ with $[o]=0$ (the neutral element); given $\alpha\comma\beta\in\mcA$, compute $\gamma\in\mcA$ with $[\gamma]=[\alpha]+[\beta]$; given $\alpha\in\mcA$, compute $\beta \in\mcA$ with $[\beta]=-[\alpha]$.

For semi-effective abelian groups $A$, $B$, with sets $\mcA$, $\mcB$ of representatives, respectively, we call a mapping $f\col A\to B$ \emph{computable} if there is a computable mapping $\varphi\col\mcA\to\mcB$ such that $f([\alpha])=[\varphi(\alpha)]$ for all $\alpha\in \mcA$.

We call a semi-effective abelian group $A$ \emph{fully effective} if there is given an isomorphism $A\cong\bbZ/q_1\oplus\cdots\oplus\bbZ/q_r$, computable together with its inverse. In detail, this consists of \begin{itemize}
\item
a finite list of generators $a_1,\ldots,a_r$ of $A$ (given by representatives) and their orders $q_1,\ldots,q_r\in\{2,3,\ldots\}\cup\{0\}$ (where $q_\thedimm=0$ gives $\bbZ/q_\thedimm=\bbZ$),
\item
an algorithm that, given $\alpha\in\mcA$, computes integers $z_1,\ldots,z_r$ so that $[\alpha]=\sum_{i=1}^rz_i a_i$; each coefficient $z_i$ is unique within $\bbZ/q_i$.
\end{itemize}

The proofs of the following lemmas are not difficult. The first is \cite[Lemma~3.2 and~3.3]{CKMSVW11}.

\begin{lemma}[kernel and cokernel]\label{l:ker_coker}
Let $f\col A\to B$ be a computable homomorphism
of fully effective abelian groups.
Then both $\ker(f)$ and $\coker(f)$ can be represented as fully effective abelian groups.
\end{lemma}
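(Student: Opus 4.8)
\textbf{Proof plan for Lemma~\ref{l:ker_coker} (kernel and cokernel).}

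The plan is to work entirely with the explicit data that full effectivity provides. Write $A\cong\bigoplus_{i=1}^r\bbZ/q_i$ and $B\cong\bigoplus_{j=1}^s\bbZ/p_j$, both isomorphisms being computable in both directions, and let $a_1,\dots,a_r$ and $b_1,\dots,b_s$ be the given generators. Since $f$ is computable, we may evaluate $f$ on each generator $a_i$ and then read off, using the full effectivity of $B$, its coordinate vector in $\bigoplus_j\bbZ/p_j$. This produces an (integer) matrix $M$ representing $f$ together with the relation data encoded by the $q_i$ and $p_j$. Thus the whole problem is reduced to a purely algebraic computation with finitely presented abelian groups, for which Smith normal form (over $\bbZ$, treating each cyclic summand $\bbZ/q$ as $\bbZ$ modulo the relation $q$) is the standard tool; this is exactly the sort of linear algebra already invoked in the proof of Proposition~\ref{prop:projectivity}.

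Concretely, for the cokernel I would form the finite presentation of $B$ whose relations are the columns of $M$ together with the diagonal relations $p_j b_j=0$, assemble these into one integer matrix, and compute its Smith normal form; the diagonal entries (with the zero columns) give the invariant factors $q_1',\dots,q_t'$ of $\coker(f)$, and the unimodular transformations produced by the Smith normal form algorithm express the new generators in terms of the $b_j$ and, conversely, express an arbitrary representative of $\coker(f)$ (which is just a representative of $B$, since $\coker(f)$ is a quotient of $B$) in the new coordinates. This is precisely the data required by the definition of a fully effective abelian group. For the kernel, note first that $A$ being fully effective means in particular it is a computable set (every element has a canonical coordinate form), so $\ker(f)$ inherits a representation as a semi-effective abelian group with the operations restricted from $A$. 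To make it fully effective, I would compute an explicit finite set of generators of the integer solution lattice of the system ``$M x\equiv 0$ in $\bigoplus_j\bbZ/p_j$, subject to $x$ being taken modulo the $q_i$'' --- again a Smith-normal-form / Hermite-normal-form computation --- then present $\ker(f)$ by those generators and relations and put that presentation into Smith normal form to obtain its invariant factors, while recording the change-of-basis so that an arbitrary element of $\ker(f)$, handed to us as a representative in $\mcA$, can be decomposed in terms of the chosen generators.

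The main obstacle, such as it is, is bookkeeping rather than mathematics: one must carry the unimodular base changes produced by Smith normal form in \emph{both} directions --- from the old generators to the new ones (to exhibit the new generators as honest representatives) and from an arbitrary coordinate vector to the new coordinates (to supply the required decomposition algorithm) --- and one must correctly handle the finite (torsion) summands, i.e.\ remember to adjoin the relations $q_i a_i=0$ and $p_j b_j=0$ before diagonalizing, rather than working over $\bbZ$ naively. None of this is deep, and indeed the statement is flagged as ``not difficult''; I would simply remark that all steps are instances of Smith normal form over $\bbZ$ applied to the finite matrices extracted from the full-effectivity data, and refer to \cite{CKMSVW11} for the routine details.
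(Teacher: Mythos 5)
Your proposal is correct and is essentially the argument the paper has in mind: the paper itself defers these ``not difficult'' proofs to \cite{CKMSVW11}, where kernel and cokernel of a computable homomorphism between fully effective abelian groups are handled exactly by extracting the integer matrix of $f$ from the generators and coordinate algorithms, adjoining the torsion relations, and running Smith normal form while tracking the unimodular base changes in both directions. (Only a cosmetic caveat: full effectivity does not literally make the representative set computable --- the paper explicitly does not assume that --- but your construction never needs it, since kernel membership of a given representative is decided via the coordinate algorithm for $B$.)
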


This implies formally that the same holds for $\im(f)$, since it equals the kernel of the projection $B \to \coker(f)$.

\begin{nexample}\label{e:homology}
Clearly, every chain group $C_n$ in an effective chain complex $C_*$ is fully effective. Thus, so are the subgroups of cocyles $Z_n(C_*)$ and boundaries $B_n(C_*)$ and, consequently, also the homology groups $H_n(C_*) = Z_n(C_*) / B_n(C_*)$. The same applies to cohomology groups of effective cochain complexes.
\end{nexample}

\begin{definition}
A \emph{semi-effective exact sequence} (of abelian groups) is an exact sequence
\[\cdots \lra A_{n+1} \xlra{d_{n+1}} A_n \xlra{d_n} A_{n-1} \xlra{d_{n-1}} A_{n-2} \lra \cdots\]
of semi-effective abelian groups and computable homomorphisms such that the induced maps
\[d_n \colon \coker d_{n+1} \to \ker d_{n-1}\]
have computable inverses, called \emph{sections}. If the sequence is bounded from either side, we require sections only for inner differentials.

Since $A_n / \ker d_n$ is represented by $\mcA_n$ and $\im d_n$ by a subset of $\mcA_{n-1}$, this amounts to computable partial mappings $\rho_{n-1} \colon \mcA_{n-1} \pto \mcA_n$, defined on representatives of $\im d_n$, such that $d_n[\rho_{n-1}(\gamma)] = [\gamma]$. In general, it may happen that $[\gamma] = [\gamma']$, while $[\rho_{n-1}(\gamma)] \neq [\rho_{n-1}(\gamma')]$.
\end{definition}

\begin{lemma}[5-lemma]\label{l:ses}
There is an algorithm that, given a semi-effective exact sequence
\[A_{2} \xlra{d_2} A_{1} \xlra{d_1} A_0 \xlra{d_0} A_{-1} \xlra{d_{-1}} A_{-2},\]
with all $A_{-2}$, $A_{-1}$, $A_1$ and $A_2$ fully effective, makes also $A_0$ fully effective.
\end{lemma}

\begin{proof}
Consider the induced short exact sequence
\[0 \lra \coker d_2 \xlra{d_1} A_0 \xlra{d_0} \ker d_{-1} \lra 0.\]
Viewing sections as maps from the kernel to the cokernel, it is still a semi-effective exact sequence. Now apply \cite[Lemma~3.5]{CKMSVW11}.
\end{proof}

\begin{definition}
We say that a mapping $f \colon A \to B$ between groups is an \emph{affine homomorphism} if its translate $f^0 \colon A \to B$, given by $f^0(a) = f(a) - f(0)$, is a group homomorphism. This is equivalent to
\begin{equation}\label{eq:affine_homomorphism}
f(a + b) = f(a) + f(b) - f(0)
\end{equation}
\end{definition}

Clearly, for semi-effective $A$ and $B$, an affine homomorphism $f$ is computable iff $f^0$ and the constant $f(0)$ are computable. We will also need the following simple lemma.

\begin{lemma}[preimage]\label{l:preimage}
Let $f \colon A \to B$ be a computable affine homomorphism of fully effective abelian groups. Then there is an algorithm that, given $b \in B$, decides whether it lies in $\im f$. If it does, it computes a preimage $a \in f^{-1}(b)$.
\end{lemma}

\begin{proof}
Equivalently, we ask for $f^0(a) = b - f(0)$. Compute the images $f^0(a_1), \ldots, f^0(a_r)$ of the generators of $A$. Next, decide if the equation
\[x_1 f^0(a_1) + \cdots + x_r f^0(a_r) = b - f(0)\]
has a solution (this is done by translating to the direct sum of cyclic groups and solving there using standard methods). If a solution exists, output $a = x_1 a_1 + \cdots + x_r a_r$.
\end{proof}

\heading{Making Eilenberg--MacLane spaces fibrewise}
The description of $\Pnew$ in the definition of a Moore--Postnikov tower as a pullback is both classical and useful for the actual construction of the tower. For the upcoming computations, it has a major disadvantage though -- the spaces appearing in the pullback square are not spaces over $B$. This is easily corrected by replacing the Eilenberg--MacLane space by the product $\Kn=B\times K(\pin,\thedim+1)$ and the ``path space'' by $\En=B\times E(\pin,\thedim)$. Denoting by $\kn$ the fibrewise Postnikov invariant, i.e.\ the map whose first component is the projection $\psiold\col\Pold\to B$ and the second component is the original (non-fibrewise) Postnikov invariant $\knp$, we obtain another pullback square
\[\xymatrix@C=40pt{
\Pnew \pb \ar[r]^-{\qn} \ar[d]_-{\pn} & E_\thedim \ar[d]^-\delta \\
\Pold \ar[r]_-\kn & K_{\thedim+1}
}\]

We will need that $\Ln=B\times K(\pin,\thedim)$ is a fibrewise abelian group: for two elements $z=(b,z')$ and $w=(b,w')$ of $\Ln$ lying over the same $b\in B$, we define $z+w\defeq(b,z'+w')$. The same applies to $\En$ and $\Kn$.

Since we know that homotopy classes of maps into Eilenberg--MacLane spaces correspond to cohomology groups and these are easy to compute, the following result should not be surprising; in its statement, the fixed map $A \to \Ln$ is the only fibrewise map (over $B$) with values on the zero section, i.e.\ $(g \iota, 0) \colon A \to B \times K(\pin, \thedim)$; we call it the \emph{zero map} and denote it $0$.

\begin{lemma}\label{l:fully_eff_cohlgy}
Let $(X,A)$ be equipped with effective homology. Then it is possible to equip $[X,\Ln]^A_B$ with a structure of a fully effective abelian group; the elements are represented by algorithms that compute (equivariant) fibrewise simplicial maps $X\to\Ln$ that take $A$ to the zero section.
\end{lemma}

\begin{proof}
We start with isomorphisms
\[[X,\Ln]^A_B\cong[(X,A),(K(\pi,\thedim),0)]\cong H^\thedim_G(X,A;\pi)\cong H^\thedim_G(X,A;\pi)^\ef\]
where the group on the right is the cohomology group of the ``effective'' cochain complex $C^*_\ef(X,A;\pi)^G=\operatorname{Hom}_\ZG(C_*^\ef(X,A),\pi)$ of equivariant cochains on the effective chain complex of $(X,A)$; the last isomorphism comes from effective homology of $(X,A)$.

Elements of these groups are represented by algorithms that compute the respective (equivariant) simplicial maps or equivariant cocycles and it is possible to transform one such representing algorithm into another, so that the isomorphisms are computable in both directions. The last group is fully effective by Example~\ref{e:homology}.
\end{proof}

It will also be useful to generalize the above lemma to the case of maps whose restriction to $A$ is fixed to a non-zero map. For practical reasons, we will formulate this for $[X, \Kn]^A_B$ and will assume that the fixed restriction is of the form $\delta c$ for some fibrewise map $c \colon A \to \En$.

\begin{lemma}\label{l:fully_eff_cohlgy_snd}
Let $(X,A)$ be equipped with effective homology. Then it is possible to equip $[X,\Kn]^A_B$ with a structure of a fully effective abelian group; the elements are represented by algorithms that compute (equivariant) fibrewise simplicial maps $X\to\Kn$ whose restriction to $A$ equals $\delta c$.
\end{lemma}

\begin{proof}
We denote the group from the statement $[X, \Kn]^{A,c}_B$ and start with the computation of its zero. Namely, it is possible to compute an extension $\widetilde c \colon X \to \En$ as in the proof of Lemma~\ref{l:lift_ext_one_stage}. The zero is then represented by $\delta \widetilde c$. There is an isomorphism
\[[X, \Kn]^{A,0}_B \xlra\cong [X, \Kn]^{A,c}_B, \quad [\ell] \mapsto [\ell + \delta\widetilde c],\]
computable in both directions. The group on the left has been endowed with a fully effective abelian group structure in Lemma~\ref{l:fully_eff_cohlgy}.
\end{proof}

We remark that the homotopy class of the zero is independent of the choice of $\widetilde c$: it is the only homotopy class in the image of $\delta_* \colon [X, \En]^A_B \to [X, \Kn]^A_B$ -- the domain has a single element since $\En$ is (fibrewise) contractible. We denote this homotopy class $0 = [\delta \widetilde c]$.

\subsection*{Semi-effectiveness of $\boldsymbol{[X,\Pnew]^A_B}$ for stable stages $\boldsymbol{\Pnew}$} \label{s:semi_eff_str}

\begin{definition}
We call a Moore--Postnikov stage $\Pnew$ \emph{stable} if $\thedim\leq 2\theconn$, where $\theconn$ is the connectivity of the homotopy fibre of $\psi\col Y\to B$ (as in the introduction).
\end{definition}

We remark that $d$ is also the connectivity of the homotopy fibre of $\psin \colon \Pnew \to B$ and, thus, stability may be defined without any reference to $Y$.

The significance of the stability condition lies in the existence of an abelian group structure on $[X,\Pnew]^A_B$. The construction of this structure is (together with the construction of the Moore--Postnikov tower) technically the most demanding part of the paper and we postpone it to later sections. For its existence, we will have to assume that $[X,\Pnew]^A_B$ is non-empty; in fact, the structure depends on the choice of a zero of this group, i.e.\ an element $[\onew] \in [X,\Pnew]^A_B$.

\newcommand{\se}
{Suppose that $\Pnew$ is a stable stage of a Moore--Postnikov tower with effective homology and that $(X,A)$ is equipped with effective homology. Then, for any given solution $\onew \colon X \to \Pnew$, the set $[X,\Pnew]^A_B$ admits a structure of a semi-effective abelian group with zero $[\onew]$, whose elements are represented by algorithms that compute diagonals $X\to \Pnew$.}
\begin{theorem}\label{t:semi_eff}
\se
\end{theorem}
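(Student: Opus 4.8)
The plan is to construct the three pieces of data required by the definition of a semi-effective abelian group — a representative of the zero element, an addition, and a negation — directly on the set of simplicial diagonals $X\to\Pnew$, and to check that these operations descend to the homotopy-class level. For the representative of zero I would simply take the zero map $og\col X\to\Pnew$, which is a valid solution precisely because $\fn$ takes values on the zero section $\onew$. The substance of the theorem lies in the addition, and this is where I would invoke the weak (equivariant, fibrewise) $H$-space structure on the stable stage $\Pnew$ that is promised in Section~\ref{sec:H_space_constr}: that structure provides, at the level of $\Pnew$, a fibrewise ``addition'' $\hadd$ together with the coherence homotopies (associativity, unitality, etc.) needed to make $[-,\Pnew]_B$ into a functor to abelian groups on the stable range. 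Given two simplicial diagonals $\ell_0,\ell_1\col X\to\Pnew$, both restricting to $\onew g$ on $A$, one forms the fibrewise pair $(\ell_0,\ell_1)\col X\to\Pnew\htimes_B\Pnew$ and postcomposes with $\hadd$ to obtain $\ell_0\hplus\ell_1$; since $\hadd$ respects the zero section up to the specified homotopies and $\ell_0,\ell_1$ agree with $\onew g$ on $A$, this sum is again a diagonal sending $A$ to the zero section, possibly after a correction supplied by the unit homotopy. All of this is computable because $\Pnew$, the $H$-space maps, and the coherence homotopies are built as computable data in the effective-homology framework, and because Propositions~\ref{prop:homotopy_lifting} and~\ref{prop:homotopy_concatenation} let us lift and concatenate the needed homotopies algorithmically using the effective homology of $(X,A)$.

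Next I would verify that $\ell_0\hplus\ell_1$ depends, up to fibrewise homotopy relative to $A$, only on the classes $[\ell_0],[\ell_1]\in[X,\Pnew]^A_B$. This follows formally from the fact that $\hadd$ is a fibrewise map: a fibrewise homotopy rel $A$ from $\ell_0$ to $\ell_0'$ crossed with $\ell_1$ and pushed through $\hadd$ gives a fibrewise homotopy rel $A$ from $\ell_0\hplus\ell_1$ to $\ell_0'\hplus\ell_1$, and symmetrically in the second variable. Associativity and commutativity of the induced operation on $[X,\Pnew]^A_B$ come from the corresponding coherence homotopies for $\hadd$ on the stable stage; that $og$ is a two-sided unit comes from the unit homotopy; and the inverse is obtained from the fibrewise ``inversion'' map $\inv$ that accompanies the weak $H$-space structure, again postcomposed with $\ell$. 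The identities $[\ell\hplus\inv\ell]=[og]$ hold up to the specified homotopies. Each of these verifications is a diagram chase combined with one application of homotopy lifting/concatenation, and none requires new ideas beyond what the $H$-space structure provides.

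Finally, semi-effectivity in the precise sense of Section~\ref{s:abelops} requires that the zero, the addition, and the negation are given by algorithms operating on the chosen representatives, where a representative of a class is ``an algorithm that computes a diagonal $X\to\Pnew$'' (this is the appropriate notion since $X$ may be infinite, so a diagonal cannot be written down as finite data). The zero is the constant algorithm returning $og$; addition runs the two input algorithms, forms the fibrewise pair, applies the computable map $\hadd$, and then runs the (algorithmic) unit-correction supplied by Proposition~\ref{prop:lift_ext_one_stage} / Proposition~\ref{prop:homotopy_lifting} to restore the condition on $A$; negation is analogous using $\inv$. I expect the main obstacle to be not any single step but the bookkeeping around the zero section: because $\Pnew$ need not be a genuine (strict) fibrewise $H$-space and $\onew$ need not be strictly preserved by $\hadd$, one must everywhere carry along the coherence homotopies and argue that the resulting sum can be corrected, algorithmically, to land in the zero section on $A$ while changing it only within its homotopy class rel $A$. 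Managing this ``weakness'' of the $H$-structure — i.e.\ showing that the strictly-defined set-level operations on representatives induce a genuine abelian group on homotopy classes — is the heart of the argument, and it is exactly what the construction in Section~\ref{sec:H_space_constr} is designed to make possible.
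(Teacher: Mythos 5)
Your overall route is the paper's route: take the zero map $og$ as zero, use the weak fibrewise \Hopf space structure of Proposition~\ref{prop:weak_H_space_structure} to form $\ell_0+\ell_1$, repair its failure to vanish on $A$ by extending the left zero homotopy $\lambda(o)\col o\sim o+o$ over $X$ via Proposition~\ref{prop:homotopy_lifting}, and get the inverse from the strict right inverse $\inv$. That is exactly how the paper computes the operations on representatives (Proposition~\ref{prop:addition_on_homotopy_classes}), and your direct argument for well-definedness of the corrected sum on homotopy classes is fine in spirit.

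The genuine gap is where you get the group axioms. You assert that the structure from Section~\ref{sec:H_space_constr} comes with ``coherence homotopies (associativity, unitality, etc.)'' and that these are built as computable data. It does not: the constructed weak structure consists only of the addition $\add\col\Pnew\htimes_B\Pnew\to\Pnew$ (encoding the left/right zero homotopies and one second-order homotopy between them) together with a strict right inverse; no associativity or commutativity homotopies are constructed, and the paper deliberately avoids having to construct them (it even notes that the strict-unit perturbation $\hadd$ of Section~\ref{sec:strictness} is not computable --- only its effect on homotopy classes is). So your proposal never actually establishes that the induced operation on $[X,\Pnew]^A_B$ is associative and commutative, nor that $o+(-\ell)$ really inverts $[\ell]$. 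The paper fills this by a non-constructive obstruction-theoretic input, Theorem~\ref{t:existence_of_H_space_structures} (resting on the $(2\theconn+1)$-connectivity statement of Proposition~\ref{prop:weak_H_space_connect}): \emph{any} \Hopf space structure on a stable stage is homotopy associative, homotopy commutative and has a homotopy inverse, with homotopies constant on the zero section --- existence suffices, since semi-effectivity only requires the operations on representatives, not the coherences, to be computable. Two smaller corrections: the group structure is defined via the (non-computable) strictification $\hadd$, and one must check the computed correction represents $[\ell_0\hplus\ell_1]$; and the negative must be represented by $o+(-\ell)$ rather than $-\ell$ (which equals $-o\neq o$ on $A$), with the verification using Lemma~\ref{lem:commuting_left_zero_homotopies} rather than being immediate.
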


The proof of the theorem occupies a significant part of the paper. First, we construct a ``weak \Hopf space structure'' on $\Pnew$ (or, in fact, a pullback of it) in Section~\ref{s:weak_H_spaces} and then show how this structure gives rise to addition on the homotopy classes of diagonals in Section~\ref{sec:weak_H_space}.

\heading{Exact sequence relating consecutive stable stages} \label{sec:exact_sequence}

To promote the semi-effective group structure on $[X,\Pnew]^A_B$ to a fully effective one, we will apply Lemma~\ref{l:ses} to a certain exact sequence relating two consecutive stable stages of the Moore--Postnikov tower. The sequence involves the groups $[X,\Ln]^A_B$ and $[X,\Kn]^A_B$, where the fixed restrictions are the zero map $A \to \Ln$ and the composite $\delta \qn \fn \colon A \to \Kn$.

\newcommand{\exseqshort}{
Suppose that $\thedim\leq 2\theconn$ and that $(X,A)$ is equipped with effective homology. For any given zero $[\oold] \in [X, \Pold]^A_B$, the computable map $\knst$ in
\[[X, \Pnew]^A_B \xlra{\pnst} [X, \Pold]^A_B \xlra{\knst} [X, \Kn]^A_B\]
is an affine homomorphism and $\im \pnst = \knstinv(0)$.}
\begin{theorem} \label{thm:exact_sequence_short}
\exseqshort
\end{theorem}

In the next theorem, a given zero $[\onew] \in [X, \Pnew]^A_B$ induces naturally, for all $i \leq n$, zeros $[o_i] \in [X, P_i]^A_B$ and Theorem~\ref{t:semi_eff} then provides $[X, P_i]^A_B$ with a group structure. Further, the group $[\stdsimp 1 \times X,P_i]^{(\partial\stdsimp 1 \times X) \cup (\stdsimp 1 \times A)}_B$ consists of homotopy classes of homotopies $o_i \sim o_i$ relative to $A$ (this prescribes the fixed restriction to the subspace $(\partial\stdsimp 1 \times X) \cup (\stdsimp 1 \times A)$), whose zero is the homotopy class of the \emph{constant} homotopy at $o_i$.

\newcounter{les}
\newcommand{\exseqseq}{
[\stdsimp 1 \times X,\Pold]^{(\partial\stdsimp 1 \times X) \cup (\stdsimp 1 \times A)}_B & \xlra{\connn} [X,\Ln]^A_B\xlra{\jnst} \notag \\
\xlra{\jnst}[X,\Pnew]^A_B & \xlra{\pnst} [X,\Pold]^A_B \xlra{\knst} [X,\Kn]^A_B}
\newcommand{\exseqlong}{
Suppose that $\thedim\leq 2\theconn$, that $(X,A)$ is equipped with effective homology and that a zero $[\onew] \in [X, \Pnew]^A_B$ is given in such a way that $[\stdsimp 1 \times X, \Polder]^{(\partial\stdsimp 1 \times X) \cup (\stdsimp 1 \times A)}_B$ is fully effective for all $\thedimm < \thedim -1$. Then there is a semi-effective exact sequence
\ifthenelse{\equal{\theles}{0}}{
\begin{align}\label{eq:les}
\exseqseq
\end{align}\setcounter{les}{1}}{
\begin{align*}
\exseqseq
\end{align*}}%
of abelian groups.}
\begin{theorem} \label{thm:exact_sequence_long}
\exseqlong
\end{theorem}

The exactness itself ought to be well known and is nearly \cite[Proposition~II.2.7]{Crabb_James}.
The proofs are postponed to Section~\ref{sec:proof_of_exact_sequence}.

\heading{Proof of Theorem~\ref{thm:main_theorem}} \label{sec:description}

Let us review the reductions made so far. By Theorem~\ref{t:n_equivalence}, it is enough to compute $[X,\Pnew]^A_B$ for $\thedim=\dim X\leq 2\theconn$. The rest of the proof does not depend on the dimension of $X$. Concretely, we prove the following two claims for all pairs $(X, A)$ with effective homology by induction with respect to $\thedim \leq 2 \theconn$:
\begin{enumerate}[labelindent=.5em,leftmargin=*,itemsep=0pt,parsep=0pt,topsep=5pt]
\item 
	given a zero $[\onew] \in [X,\Pnew]^A_B$, make $[X, \Pnew]^A_B$ into a fully effective abelian group;
\item
	decide if $[X, \Pnew]^A_B$ is non-empty and, if this is the case, compute an element $[\onew]$.
\end{enumerate}
Since $P_0 = B$, we have $[X,P_0]^A_B = *$ and both claims are trivial in this case.

By Theorem~\ref{t:semi_eff}, $[X,\Pnew]^A_B$ is a semi-effective abelian group. According to Theorem~\ref{thm:exact_sequence_long}, this group fits into an exact sequence with all remaining terms fully effective either by Lemma~\ref{l:fully_eff_cohlgy}, Lemma~\ref{l:fully_eff_cohlgy_snd} or by induction, since they concern diagonals into $\Pold$ (the domain $\stdsimp 1 \times X$ of the leftmost term admits effective homology by Proposition~\ref{prop:relative_product}). Lemma~\ref{l:ses} makes $[X, \Pnew]^A_B$ fully effective.

If $[X, \Pold]^A_B$ is empty, so is $[X, \Pnew]^A_B$. Otherwise, compute a zero of $[X, \Pold]^A_B$ and make it into a fully effective abelian group structure. Next, use Lemma~\ref{l:preimage} to decide if $0$ lies in the image of the affine homomorphism $\knst$ and, if this is the case, compute a preimage $[\oold]$ (generally different from the chosen zero of $[X, \Pold]^A_B$). Finally, lift $\oold$ to $\onew \colon X \to \Pnew$ using Proposition~\ref{prop:lift_ext_one_stage} -- a lift exists by Theorem~\ref{thm:exact_sequence_short}.

\subsection*{Deciding existence for $\thedim = \dim X = 2\theconn+1$}
Since Lemma~\ref{l:lift_ext_one_stage} guarantees the existence of a diagonal $X\to\Pnew$ as a lift of \emph{any} partial diagonal $X\to\Pold$, it is enough to decide whether the stable $[X,\Pold]^A_B$ is non-empty.\qed

\heading{Proof of Theorem~\ref{theorem:equivariant}}

We describe how the set of equivariant homotopy classes of maps $[X,Y]$ between two $G$-simplicial sets can be computed as a particular stable instance of the lifting-extension problem, namely $[X,Y]^\emptyset_{EG}$, so that Theorem~\ref{thm:main_theorem} applies.

This instance is obtained by setting $B=EG$, where $EG$ (known as the Rips complex) is a non-commutative version of $E(\pi,0)$. It has as $\thedim$-simplices sequences $(a_0,\ldots,a_\thedim)$ of elements $a_\thedimm\in G$, and its face and degeneracy operators are the maps
\begin{align*}
d_\thedimm(a_0,\ldots,a_\thedim) & =(a_0,\ldots,a_{\thedimm-1},a_{\thedimm+1},\ldots,a_\thedim) \\
s_\thedimm(a_0,\ldots,a_\thedim) & =(a_0,\ldots,a_{\thedimm-1},a_\thedimm,a_\thedimm,a_{\thedimm+1},\ldots,a_\thedim).
\end{align*}
There is an obvious diagonal action of $G$ which is clearly free.

As every $k$-simplex of $EG$ is uniquely determined by its (ordered) collection of vertices, it is clear that a simplicial map $g\col X\ra EG$ is uniquely determined by the mapping $g_0\col X_0\ra G$ of vertices and $g$ is equivariant if and only if $g_0$ is. A particular choice of a map $X\to EG$ is thus uniquely specified by sending the distinguished vertices of $X$ to $(e)$; it is clearly computable. Moreover, any two equivariant maps $X\to EG$ are (uniquely) equivariantly homotopic (vertices of $\stdsimp1\times X$ are those of $\vertex0\times X$ and $\vertex1\times X$).

Factoring $Y\to EG$ as $Y\cof[\sim]Y'\fib EG$ using Lemma~\ref{l:fibrant_replacement}, the geometric realization of $Y'$ equivariantly deforms onto that of $Y$. This shows that the first map in
\[[X,Y]\xra\cong[X,Y']\la[X,Y']^\emptyset_{EG}\]
is a bijection and it remains to study the second map. As observed above, for every simplicial map $X\to Y'$, the lower triangle in
\[\xymatrix@C=40pt{
\emptyset \ar[r] \ar[d] & Y' \ar@{->>}[d] \\
X \ar[r] \ar@{-->}[ur]^-\ell & EG
}\]
commutes up to homotopy. Since $Y'\fib EG$ is a fibration, one may replace $\ell$ by a homotopic map for which it commutes strictly, showing surjectivity of $[X,Y']^\emptyset_{EG}\to[X,Y']$. The injectivity is implied by uniqueness of homotopies -- every homotopy of maps $X\to Y'$ that are diagonals is automatically vertical.

It remains to show how to identify a given equivariant map $\ell\col X\to Y$ as an element of the computed group $[X,\Pnew]^\emptyset_{EG}$. By its fully effective abelian group structure, it is enough to find the corresponding diagonal $X\to\Pnew$. As above, compute a homotopy $h$ from $\psi\ell$ to $g\col X\to EG$; then, using Proposition~\ref{prop:homotopy_lifting}, compute a lift of $h$ that fits into
\[\xymatrix{
\vertex0\times X \ar[r]^-{\ell} \ar@{ >->}[d] & Y \ar[r]^-{\varphin} & \Pnew \ar@{->>}[d] \\
\stdsimp1\times X \ar[rr]_-h \ar@{-->}[rru]^{\widetilde h} & & EG
}\]
The restriction of $\widetilde h$ to $\vertex1\times X$ is the required diagonal $X\to\Pnew$.
\qed
\vskip\topsep

We remark that it is also possible to compute $[X,Y]$ as $[X, X \times Y]^\emptyset_X$.

\section{Weak \Hopf spaces}\label{s:weak_H_spaces}

Our goal for the following two sections is to equip $[X, \Pnew]^A_B$ with a semi-effective abelian group structure. We will do this indirectly -- we replace $\Pnew$, a space over $B$, by a certain pullback $\tPnew$, a space over $\tB$. Proposition~\ref{prop:homotopy_classes_pullback} will then give an isomorphism $[X, \Pnew]^A_B \cong [X, \tPnew]^A_\tB$, computable in both directions, and will thus reduce our task to a similar one for $\tPnew$. The main advantage of $\tPnew$ over $\Pnew$ is that the projection $\tpsin \colon \tPnew \to \tB$ admits a section $\tonew \colon \tB \to \tPnew$ that we may think of as a choice of a point in each fibre of $\tpsin$ (made in a ``continuous'' way) -- we say that $\tPnew$ is pointed.

This is the first step to introducing a fibrewise \Hopf space structure on $\tPnew$; again, one could think of this structure as a choice of an \Hopf space structure on each fibre that is made in a ``continuous'' way. The fibrewise \Hopf space structure on $\tPnew$ induces an abelian group structure on the set of fibrewise homotopy classes of maps to $\tPnew$ as usual; this is described in Section~\ref{sec:weak_H_space}.

To simplify the notation, i.e.\ in order to deal with $\Pnew$ rather than $\tPnew$, we will assume in this section that $\Pnew$ itself is pointed (and stable) and equip it with a fibrewise \Hopf space structure and treat the general case only in the next section.

First, we explain a simple approach to constructing
a \emph{strict} fibrewise \Hopf space structure, which we were not able to make
algorithmic, but which introduces ideas employed in the actual
proof of~Theorem~\ref{t:semi_eff}, and it also
shows why a weakening of the \Hopf space structure is needed.

We start with additional running assumptions.

\begin{convention}\label{con:fibrewise}
In addition to Convention~\ref{con:G_action_loc_eff}, all simplicial sets are equipped with a map to $B$ and all maps, homotopies, etc.\ are fibrewise, i.e.\ they commute with the specified maps to $B$. In the case of homotopies, this means that they remain in one fibre the whole time or, in other words, that they are vertical.
\end{convention}

\begin{definition}
We say that a space $P$ over $B$, with projection $\psi \colon P \fib B$, is \emph{pointed} if there is provided a section $o \colon B \to P$, i.e.\ a map such that $\psi o = \id$. We will call this distinguished section $o$ the \emph{zero section}.
\end{definition}

\heading{Fibrewise \Hopf spaces}

Let $P$ be a pointed space over $B$ with projection $\psi\col P\fib B$ and zero section $o\col B\ra P$. We recall that the pullback $P\times_BP$ consists of pairs $(x,y)$ with $\psi(x)=\psi(y)$. Associating to $(x,y)$ this common value makes $P\times_BP$ into a space over $B$. We recall that a (fibrewise) \emph{\Hopf space structure} on $P$ is a (fibrewise) map
\[\add\col P\times_BP\ra P,\]
where we write $\add(x,y)=x+y,$ that satisfies a single condition -- the zero section $o$ should act as a zero for this addition, i.e.\ for $x\in P$ lying over $b=\psi(x)$ we have $o(b)+x=x=x+o(b)$. In the proceeding, we will abuse the notation slightly and write $o$ for any value of $o$, so that we rewrite the zero axiom as $o+x=x=x+o$. After all, there is a single value of $o$ for which this makes sense. It will be convenient to organize this structure into a commutative diagram
\[\xymatrix@C=40pt{
P\vee_BP \ar[rd]^-\nabla \ar[d]_\vartheta \\
P\times_BP \ar[r]_-\add & P
}\]
with $P\vee_BP$ the fibrewise wedge sum, $P\vee_BP=(B\times_BP)\cup(P\times_BB)$ (where $B\subseteq P$ is the image of the zero section $o$), and with $\nabla$ denoting the fold map given by $(o,x)\mapsto x$ and $(x,o)\mapsto x$. As explained, all maps are fibrewise over $B$. Under this agreement, the above diagram is a \emph{definition} of a (fibrewise) \Hopf space structure.

We say that the \Hopf space structure is \emph{homotopy associative} if there exists a homotopy $(x+y)+z\sim x+(y+z)$ (i.e.\ formally a homotopy of maps $P\times_BP\times_BP\to P$) that is constant when restricted to $x=y=z=o$. \emph{Homotopy commutativity} is defined similarly. Finally, it has a \emph{right homotopy inverse} if there exists a map $\inv\col P\to P$, denoted $x\mapsto -x$, such that $-o=o$ and such that there exists a homotopy $x+(-x)\sim o$, constant when restricted to $x=o$.

We have already met an example of an \Hopf space, namely $\Ln=B\times K(\pin,\thedim)$. We recall that $\Pnew$ is a stable stage if $\thedim \leq 2 \theconn$, where $\theconn$ is the connectivity of the homotopy fibre of $\psi$. In general, we have the following theorem, whose proof can be found in Section~\ref{sec:existence_of_H_space_structures_proof}.

\newcommand{\eoHss}
{Every pointed stable Moore--Postnikov stage $\Pnew$ admits a fibrewise \Hopf space structure. Any such structure is homotopy associative, homotopy commutative and has a right homotopy inverse. It is unique up to homotopy relative to $\Pnew \vee_B \Pnew$.}
\begin{theorem}\label{t:existence_of_H_space_structures}
\eoHss
\end{theorem}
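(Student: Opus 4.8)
\emph{Sketch of the intended proof.} All four structures are produced by obstruction theory along the Moore--Postnikov tower $B=P_0,P_1,\dots,P_n=\Pnew$, using Proposition~\ref{prop:lift_ext_one_stage} for the inductive step; in every case the obstructions lie in equivariant cohomology groups that vanish because a pair assembled from $\Pnew$ is highly connected, the $d$-connectivity of the homotopy fibre being the ultimate source. Write $\bar F$ for the fibre of $\Pnew\to B$; axioms~\ref{MP1}--\ref{MP2} identify it with the $n$-th Postnikov stage of the $d$-connected homotopy fibre of $\varphi$, so $\bar F$ is $d$-connected (if $n\le d$ it is contractible, $\Pnew\to B$ is a weak equivalence, and the statement is vacuous; so assume $d<n$). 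For $k\ge 2$ let $T_k\subseteq\Pnew^{\times_B k}$ denote the fibrewise fat wedge --- the union of the loci where some coordinate lies on the zero section $o(B)$ --- so that $T_2=\Pnew\vee_B\Pnew$. Fibrewise over $B$ the quotient $\Pnew^{\times_B k}/T_k$ is the $k$-fold smash $\bar F^{\wedge k}$, which is $\bigl(k(d+1)-1\bigr)$-connected; a (relative, fibrewise) Serre spectral sequence over the simply connected $B$ then gives $H_j\bigl(\Pnew^{\times_B k},T_k;\mathbb Z\bigr)=0$ for $j\le k(d+1)-1$. Since the diagonal $G$-action on $\Pnew^{\times_B k}$ is free, $C_*(\Pnew^{\times_B k},T_k)$ is a nonnegatively graded complex of free $\ZG$-modules, and such a complex with homology vanishing through degree $m$ has $\operatorname{Hom}_{\ZG}(-,M)$-cohomology vanishing through degree $m$ for every $\ZG$-module $M$; hence
\[
H^j_G\bigl(\Pnew^{\times_B k},T_k;M\bigr)=0\qquad\text{for }j\le k(d+1)-1 .
\]
In particular $H^j_G(\Pnew\times_B\Pnew,\Pnew\vee_B\Pnew;M)=0$ for $j\le 2d+1$ and $H^j_G(\Pnew^{\times_B 3},T_3;M)=0$ for $j\le 3d+2$; I will also use the suspension shift $H^{j+1}_G\bigl(\stdsimp1\times Z,(\partial\stdsimp1\times Z)\cup(\stdsimp1\times W);M\bigr)\cong H^j_G(Z,W;M)$ for these pairs.

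To construct $\add$, extend the fold map $\nabla\col\Pnew\vee_B\Pnew\to\Pnew$ to a fibrewise map $\Pnew\times_B\Pnew\to\Pnew$ by lifting the structure map $\Pnew\times_B\Pnew\to B$ successively through $P_1,\dots,P_n$. The $i$-th step is an instance of Proposition~\ref{prop:lift_ext_one_stage} for the pair $(\Pnew\times_B\Pnew,\Pnew\vee_B\Pnew)$ and the fibration $P_i\to P_{i-1}$; a diagonal exists since $H^{i+1}_G(\Pnew\times_B\Pnew,\Pnew\vee_B\Pnew;\pi_i)=0$ because $i+1\le n+1\le 2d+1$. (Only the existence assertion of that proposition, which needs no effective homology, is invoked.) The resulting $\add$ has $o$ as a strict two-sided unit because $\nabla$ does.

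Now fix an arbitrary $\add$. By the unit axiom, $\add$ and the map $(x,y)\mapsto y+x$ agree \emph{strictly} on $\Pnew\vee_B\Pnew$, and $(x+y)+z$ and $x+(y+z)$ agree strictly on $T_3$. In each case a fibrewise homotopy between the two maps, relative to that subspace, is a diagonal in a lifting problem over $\stdsimp1\times(\,\cdot\,)\to B$ with the evident boundary data; building it stage by stage with Proposition~\ref{prop:lift_ext_one_stage} (as in Proposition~\ref{prop:homotopy_concatenation}), the obstructions lie in $H^i_G(\Pnew\times_B\Pnew,\Pnew\vee_B\Pnew;\pi_i)$, respectively $H^i_G(\Pnew^{\times_B 3},T_3;\pi_i)$, which vanish for $i\le n\le 2d$. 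These homotopies are constant on the basepoints, which gives homotopy commutativity and homotopy associativity. For the homotopy inverse, consider the fibrewise shear map $\tau\col\Pnew\times_B\Pnew\to\Pnew\times_B\Pnew$, $\tau(x,y)=(x,x+y)$, viewed as a map over $\Pnew$ via $\pr_1$ and equal to the identity on $o(B)\times_B\Pnew$; on each $\pr_1$-fibre it is left translation in the connected, homotopy-associative $H$-space $\bar F$, hence a homotopy equivalence, so by Dold's theorem over the connected base $\Pnew$ it is a fibrewise homotopy equivalence. Choosing a fibrewise homotopy inverse $\sigma$ equal to the identity on $o(B)\times_B\Pnew$ and setting $\inv(x)\defeq\pr_2\bigl(\sigma(x,o)\bigr)$ yields $-o=o$ together with a fibrewise nullhomotopy of $x+\inv(x)=\pr_2\bigl(\tau\sigma(x,o)\bigr)\simeq\pr_2(x,o)=o$ that is constant where $x\in o(B)$.

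The step I expect to be the main obstacle is the key vanishing at the start: making the fibrewise connectivity estimate for $(\Pnew^{\times_B k},T_k)$ rigorous --- the fat wedge is not fibred over $B$, so one must pass to fibrant replacements or run a Mayer--Vietoris / relative Serre argument by hand --- and then passing cleanly from the vanishing of integral relative homology to the vanishing of equivariant cohomology with arbitrary coefficients. Once this input is secured, everything else is a routine, repeated application of Proposition~\ref{prop:lift_ext_one_stage}.
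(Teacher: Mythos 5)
Your proposal follows essentially the same route as the paper's: construct $\add$ by obstruction theory along the tower extending the fold map, deduce homotopy commutativity and associativity from the uniqueness up to homotopy (rel the wedge, resp.\ a fat-wedge-type subspace) of such extensions, and obtain the inverse from the shear map --- the paper simply cites Stasheff's Theorem~3.4, whose proof is exactly your translation argument. The one step you flag as the main obstacle is Proposition~\ref{prop:weak_H_space_connect} in the paper, and it is settled there more directly than by a fibrewise Serre spectral sequence: $\Pnew\to B$ is a composite of pullbacks of the minimal fibrations $\delta\col E(\pi_\thedimm,\thedimm)\to K(\pi_\thedimm,\thedimm+1)$, hence itself minimal, hence trivial as a sectioned fibration over every simplex of $B$; therefore $\Pnew\vee_B\Pnew\to B$ \emph{is} an honest fibre bundle with fibre $F\vee F$ (your worry that the wedge is not fibred over $B$ evaporates), and the five lemma applied to the two long exact sequences of homotopy groups gives $(2\theconn+1)$-connectivity of the inclusion into $\Pnew\times_B\Pnew$. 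The passage to equivariant cohomology with arbitrary coefficients is exactly your free-$\ZG$-module contraction argument (as in Proposition~\ref{prop:projectivity}). One harmless local difference: for associativity the paper avoids the full triple fat wedge $T_3$ and uses only $(B\times_B\Pnew\times_B\Pnew)\cup(\Pnew\times_B\Pnew\times_BB)$, on which the two associations also agree strictly and which is already $(2\theconn+1)$-connected in $\Pnew\times_B\Pnew\times_B\Pnew$ by the same two-factor estimate, so the stronger $(3\theconn+2)$-connectivity you invoke is not needed.
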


The importance of this result does not lie in the existence of an \Hopf space structure itself but in its uniqueness and its properties. After all, we will need to construct this structure and, in this respect, the above existential result is not sufficient.

\heading{\Hopf space structures on pullbacks}\label{sec:non_constr_H_space}

We describe a general method for introducing \Hopf space structures on pullbacks since $\Pnew$ is defined in this way. Let us start with a general description of our situation. We are given a pullback square
\[\xymatrix@C=40pt{
P \ar[r] \ar[d] \pb & R \ar@{->>}[d]^-\psi \\
Q \ar[r]_-\chi & S
}\]
with $\psi$ a fibration. We assume that all of $Q$, $R$ and $S$ are \Hopf spaces over $B$, and that $R$ and $S$ are strictly associative, commutative and with a strict inverse. If both $\psi$ and $\chi$ preserved the addition strictly we could define addition on $P\subseteq Q\times R$ componentwise. In our situation,
though, $\chi$ preserves the addition only up to homotopy and, accordingly, the addition on $P$ will have to be perturbed to
\begin{equation}\label{eq:addition_on_pullback}
(x,y)+(x',y')=(x+x',y+y'+M(x,x')).
\end{equation}
There are two conditions that need to be satisfied in order for this formula to be correct: $\psi M(x,x')=\chi(x+x')-(\chi(x)+\chi(x'))$ (so that the right-hand side of \eqref{eq:addition_on_pullback} lies in the pullback) and $M(x,o)=o=M(o,x)$ (to get an \Hopf space). Both are summed up in the following lifting-extension problem
\[\xymatrix@C=30pt{
P\vee_BP \ar[r]^-o \ar[d]_-\vartheta & R \ar@{->>}[d]^-\psi \\
P\times_BP \ar[r]_-m \ar@{-->}[ru]^-M & S
}\]
with $m(x,x')=\chi(x+x')-(\chi(x)+\chi(x'))$. In our situation, $\psi$ is $\delta\col\En\to\Kn$. Thus, Lemma~\ref{l:lift_ext_one_stage} would give us a solution if the pair $(P\times_BP,P\vee_BP)$ had effective homology. 

However, we have not been able to prove this and, consequently, we cannot construct the addition on the pullback. In the computational world, we are thus forced to replace this pair by a certain homotopy version $(P\htimes_BP,P\hvee_BP)$ of it that admits effective homology. This transition corresponds, as will be explained later, to a passage from \Hopf spaces to a weakened notion, where the zero section serves as a zero for the addition only up to homotopy.

After this rather lengthy introduction, the plan for the rest of the section is to introduce weak \Hopf spaces and then to describe an inductive construction of weak \Hopf space structure on pointed stable stages of Moore--Postnikov towers. We believe that to understand the weak version, it helps significantly to keep in mind the above formula for addition on~$P$. For the same reason, we give a formula for a right inverse in $P$, assuming that it exists in $Q$ (and in $R$ and $S$, as required earlier):
\begin{equation}\label{eq:inverse_on_pullback}
-(x,y)=(-x,-y-M(x,-x)).
\end{equation}

\heading{Weak \Hopf spaces} \label{s:hocolim}

We will need a weak version of an \Hopf space. Roughly speaking this is defined to be a fibrewise addition $x+y$ together with left zero and right zero homotopies $\lambda\col y\sim o+y$ and $\rho\col x\sim x+o$ that become homotopic as homotopies $o\sim o+o$. In simplicial sets, a homotopy between homotopies can be defined in various ways. Here we will interpret it as a map $\eta\col\stdsimp{2}\times B\ra P$ that is a constant homotopy on $\bdry_2\stdsimp{2}\times B$ and restricts to the two unit homotopies on $\bdry_1\stdsimp{2}\times B$ and $\bdry_0\stdsimp{2}\times B$, respectively:
\[\xymatrix{
& o+o \POS[];[d]**{}?(.6)*{//{\scriptstyle\eta}//} \\
o \ar[rr]_{s_0o} \ar[ru]^{\lambda(o)} & & o \ar[lu]_{\rho(o)}
}\]

We will organize this data into a map $\add\col P\htimes_BP\to P$ with similar properties to the strict \Hopf space structure. The space $P\htimes_BP$ will be a special case of the following construction which works for any commutative square (of spaces over $B$)
\[\mcS={}\quad\xymatrixc{
Z \ar[r]^-{u_0} \ar[d]_-{u_1} & Z_0 \ar[d]^-{v_0} \\
Z_1 \ar[r]_-{v_1} & Z_2
}\]
that we denote for simplicity by $\mcS$. We define $|\mcS|$ and its subspace $\bdry_2|\mcS|$ as particular small models of the homotopy colimit of the square $\mcS$ and of the homotopy pushout of $Z_0$ and $Z_1$ along $Z$; namely,
\begin{align*}
|\mcS| & =\big(\stdsimp{2}\times Z\big)\cup\big(\bdry_1\stdsimp{2}\times Z_0\big)\cup\big(\bdry_0\stdsimp{2}\times Z_1\big)\cup\big(2\times Z_2\big), \\
\bdry_2|\mcS| & =\big(\bdry_2\stdsimp{2}\times Z\big)\cup\big(0\times Z_0\big)\cup\big(1\times Z_1\big),
\end{align*}
where we assume for simplicity that all maps in $\mcS$ are inclusions; otherwise, the union has to be replaced by a certain (obvious) colimit. In the case of inclusions, $|\mcS|$ is naturally a subspace of $\stdsimp2\times Z_2$ and as such admits an obvious map to $\stdsimp2\times B$ whose fibres are equal to those of $Z$, $Z_0$, $Z_1$ or $Z_2$ (over $B$), depending on the point of $\stdsimp2$. In the picture below, $B=\{*\}$ and $|\mcS|$ is thus depicted as a space over $\stdsimp2$; here, $Z_2$ is a $3$-simplex, $Z_0$ and $Z_1$ its edges and $Z$ their common vertex.
\begin{figure}[H]
\centering
\includegraphics{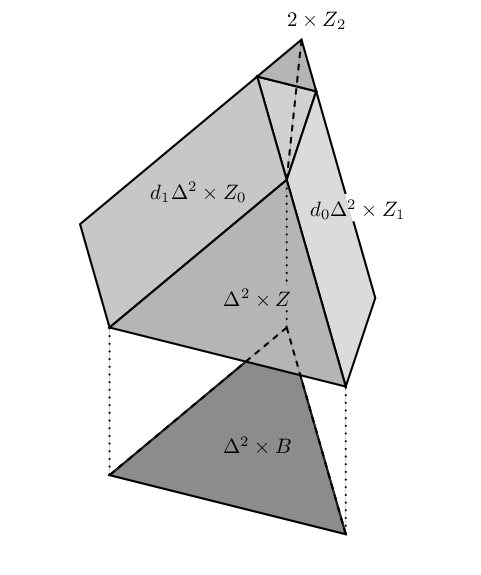}
\end{figure}\noindent

The construction $|\mcS|$ possesses the following universal property: to give a map $f\col |\mcS|\ra Y$ is the same as to give maps $f_\thedimm\col Z_\thedimm\ra Y$ (for $\thedimm=0\comma 1\comma 2$), homotopies $h_\thedimm\col f_\thedimm\sim f_2v_\thedimm$ (for $\thedimm=0\comma 1$) and a ``second order homotopy'' $H\col \stdsimp{2}\times Z\ra Y$ whose restriction to $\bdry_\thedimm\stdsimp{2}\times Z$ equals $h_\thedimm u_\thedimm$ (for $\thedimm=0\comma 1$). Similarly, a map $\bdry_2|\mcS|\ra Y$ is specified by $f_0\col Z_0\to Y$ and $f_1\col Z_1\to Y$ as above and a homotopy $f_0u_0 \sim f_1u_1$.

In order to apply this definition to weak \Hopf spaces, we consider the square
\[\mcS_P={}\qquad\xymatrixc{
B\times_BB \ar@{ >->}[r] & \leftbox{B\times_BP}{{}\defeq P_\mathrm{right}} \ar@{ >->}[d] \\
\rightbox{P_\mathrm{left}\defeq{}}{P\times_BB} \ar@{ >->}[r] \POS[]*+!!<0pt,\the\fontdimen22\textfont2>{\phantom{P\times_BB}}="a";[u]\ar@{ >->}"a" & P\times_BP
}\]
(the subspaces consist of pairs where one of the two components, or both, lie on the zero section). We will denote $B\times_BB$ for simplicity by $B$, to which it is canonically isomorphic.

\begin{definition}
Let $P\to B$ be a Kan fibration. We define simplicial sets
\[P\hvee_BP\stackrel{\mathrm{def}}{=}\bdry_2|\mcS_P|,\qquad P\htimes_BP\stackrel{\mathrm{def}}{=}|\mcS_P|.\]
We denote the inclusion by $\vartheta\col P\hvee_BP\to P\htimes_BP$.

Furthermore, we define a ``fold map'' $\hnabla\col P\hvee_BP\ra P$, prescribed as the identity map on $0\times P_\mathrm{right}$ and $1\times P_\mathrm{left}$ and as the constant homotopy at $o$ on $\bdry_2\stdsimp{2}\times B$.
\end{definition}

We remark that $P\hvee_BP$ and $P\htimes_BP$ are weakly homotopy equivalent to $P\vee_BP$ and $P\times_BP$, respectively; this is proved in Lemma~\ref{l:whe_weak_strict}. Now, we are ready to define weak \Hopf spaces.

\begin{definition}
A \emph{weak \Hopf space structure} on $P$ is a (fibrewise) map $\add\col P\htimes_BP\to P$ that fits into a commutative diagram
\[\xymatrix@C=40pt{
P\hvee_BP \ar[rd]^-\hnabla \ar[d]_\vartheta \\
P\htimes_BP \ar[r]_-\add & P
}\]
We denote the part of $\add$ corresponding to $2\times(P\times_BP)$ by $x+y=\add(2,x,y)$, the part corresponding to $\bdry_1\stdsimp{2}\times P_\mathrm{right}$, i.e.\ the left zero homotopy, by $\lambda$, and the part corresponding to $\bdry_0\stdsimp{2}\times P_\mathrm{left}$, i.e.\ the right zero homotopy, by $\rho$.

Finally, we define a ``diagonal'' $\hDelta\col P\ra P\htimes_BP$ by $x\mapsto(2,x,x)$.
\end{definition}

All these associations are natural, making $P\hvee_BP$, $P\htimes_BP$ into functors and $\hnabla$, $\vartheta$, $\hDelta$ into natural transformations.

\newcommand{\ehc}
{Assume that all the spaces in the square $\mcS$ have effective homology. Then so does the pair $(|\mcS|,\bdry_2|\mcS|)$.}
\begin{proposition}\label{p:effective_homotopy_colimits}
\ehc
\end{proposition}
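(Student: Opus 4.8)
The plan is to present $(|\mcS|,\bdry_2|\mcS|)$ as a finite iterated pushout of pieces of the form $\Delta\times Z_\thedimm$, with $\Delta$ a finite simplicial set, glued along subcomplexes of the same shape, and then to propagate effective homology through this construction using Proposition~\ref{prop:relative_product} together with one ``gluing'' lemma.

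To set up the decomposition I would unwind the universal property of $|\mcS|$ recorded just before the proposition. Write $\cyl(v_\thedimm)=(\stdsimp1\times Z_\thedimm)\cup_{1\times Z_\thedimm}Z_2$ for the mapping cylinders of $v_0\col Z_0\to Z_2$ and $v_1\col Z_1\to Z_2$, let $W'=\cyl(v_0)\cup_{Z_2}\cyl(v_1)$ be the double cylinder, and let $L=\horn22=\bdry_0\stdsimp2\cup\bdry_1\stdsimp2$ be the union of the edges $[1,2]$ and $[0,2]$ of $\stdsimp2$. Reading off the corepresented functor gives
\[|\mcS|=W'\cup_{L\times Z}(\stdsimp2\times Z),\]
where $L\times Z$ is included in $W'$ edgewise along $u_0,u_1$ and in $\stdsimp2\times Z$ as $L\subseteq\stdsimp2$, and $\bdry_2|\mcS|$ is the union of $\bdry_2\stdsimp2\times Z$ with $(0\times Z_0)\cup(1\times Z_1)\subseteq W'$. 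Since each $\cyl(v_\thedimm)$ is itself the pushout $(\stdsimp1\times Z_\thedimm)\cup_{Z_\thedimm}Z_2$ and $W'$ is the pushout of the two cylinders along $Z_2$, the pair $(|\mcS|,\bdry_2|\mcS|)$ is built by finitely many pushouts; at each one glues on a pair of the form $(\Delta,\Delta')\times(Z_\thedimm,\emptyset)$ along an intersection which is again a product of a finite pair of simplicial sets with some $(Z_\thedimmm,\emptyset)$. Each such piece and each such intersection carries effective homology by Proposition~\ref{prop:relative_product} (applied with the finite factor), and a short check shows all these pushout squares are excisive, i.e.\ each gluing subcomplex meets the opposite piece exactly in their common intersection.

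It then remains to establish the gluing lemma: if $W=W_1\cup W_2$ with $W_0=W_1\cap W_2$ and $V_\thedimm\subseteq W_\thedimm$ with $V_0=V_1\cap V_2$ form an excisive square, then effective homology of $(W_0,V_0)$, $(W_1,V_1)$ and $(W_2,V_2)$ yields effective homology of $(W,V_1\cup V_2)$. Here excisiveness provides a short exact sequence
\[0\to C_*(W_0,V_0)\xra{c\mapsto(c,c)}C_*(W_1,V_1)\oplus C_*(W_2,V_2)\to C_*(W,V_1\cup V_2)\to0\]
whose diagonal injection is split by the projection onto the first summand, so $C_*(W,V_1\cup V_2)$ is the target of a reduction out of the mapping cone of that injection. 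The cone is in turn a perturbation of the direct sum $C_*(W_0,V_0)[{-}1]\oplus C_*(W_1,V_1)\oplus C_*(W_2,V_2)$, which has effective homology since direct sums of strong equivalences are strong equivalences, by the operator induced by the injection; since this operator sends the first summand into the others and vanishes on the others, its composite with any degree-preserving operator squares to zero, so the nilpotence hypothesis of the basic perturbation lemma (Proposition~\ref{p:bpl}) holds automatically. Composing strong equivalences equips $C_*(W,V_1\cup V_2)$ with effective homology, and iterating over the decomposition above finishes the proof. All constructions involved (the relative Eilenberg--Zilber reduction behind Proposition~\ref{prop:relative_product}, mapping cones, the perturbation lemmas, direct sums) are functorial, hence equivariant; alternatively one may run everything non-equivariantly and appeal to Theorem~\ref{t:vokrinek}.

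The step I expect to be the main obstacle is not any single homological-algebra manipulation but the combinatorial bookkeeping: reading off the correct pushout presentation of $|\mcS|$ and of its subcomplex $\bdry_2|\mcS|$ from the universal property, and checking excisiveness at every stage so that the Mayer--Vietoris sequence above is genuinely exact.
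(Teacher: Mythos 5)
Your decomposition of $(|\mcS|,\bdry_2|\mcS|)$ is correct, and your overall strategy is a genuinely different route from the paper's. The paper does not cut $|\mcS|$ into pushout pieces at all: it filters $C_*|\mcS|$ by the preimages of the skeleta of $\stdsimp2$ under the projection $|\mcS|\to\stdsimp2$, identifies each filtration quotient via the Eilenberg--Zilber reduction with a (suspended) direct sum of the complexes $C_*Z$, $C_*Z_0$, $C_*Z_1$, $C_*Z_2$, and then invokes a general lemma on locally finite filtered complexes (itself an application of the perturbation lemmas) to reassemble $C_*|\mcS|$; the subcomplex $\bdry_2|\mcS|$ is handled for free because its graded pieces are direct summands of those of $|\mcS|$. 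Your Mayer--Vietoris/mapping-cone argument uses the same underlying engine (direct sums of strong equivalences plus the basic perturbation lemma with an evidently nilpotent perturbation) but organizes the bookkeeping around the gluing maps rather than around the fibration over $\stdsimp2$. Both work; the paper's filtration is arguably less combinatorial because there is only one filtration and the compatibility with $\bdry_2|\mcS|$ is visible at a glance, whereas you must verify excisiveness and track the subcomplex through several pushouts.

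Two points in your gluing lemma need repair. First, the diagonal injection $\iota\col C_*(W_0,V_0)\to C_*(W_1,V_1)\oplus C_*(W_2,V_2)$ is \emph{not} split by the projection onto the first summand: that projection lands in $C_*(W_1,V_1)$, and $\mathrm{pr}_1\circ\iota$ is the inclusion $C_*(W_0,V_0)\to C_*(W_1,V_1)$, not the identity. What you actually need for the reduction $\cone(\iota)\Ra\coker(\iota)\cong C_*(W,V_1\cup V_2)$ is a \emph{degreewise} (graded, non-chain) retraction of $\iota$; this exists because all complexes are cellular and the distinguished basis of $(W_0,V_0)$ sits inside that of $(W_1,V_1)$, so one composes $\mathrm{pr}_1$ with the basis projection onto the $W_0$-span. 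Second, your lemma as stated only equips the \emph{relative} chain complex $C_*(W,V_1\cup V_2)$ with effective homology, whereas the proposition (with the paper's definition) asserts effective homology of the \emph{pair}, i.e.\ a strong equivalence of $C_*|\mcS|$ with an effective complex that restricts to one of $C_*\bdry_2|\mcS|$; the paper's own footnote records that the relative version is strictly weaker. The weaker version does suffice for the downstream use in Lemma~\ref{l:lift_ext_one_stage}, and your iteration is internally consistent if every occurrence of ``effective homology of a pair'' is read in the relative sense, but to prove the proposition as stated you would have to run the cone-and-reduction step as a reduction of cellular pairs (handling $C_*W$ and $C_*(V_1\cup V_2)$ simultaneously and compatibly), which is additional bookkeeping you have not addressed.
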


The proof is given in Section~\ref{sec:effective_homotopy_colimits}. The following special case will be crucial in constructing a weak \Hopf space structure on pointed stable stages of Moore--Postnikov towers.

\newcommand{\wHseh}
{Let $\Pnew$ be a pointed stage of a Moore--Postnikov tower with effective homology. Then it is possible to equip the pair $(\Pnew\htimes_B\Pnew,\Pnew\hvee_B\Pnew)$ with effective homology.}
\begin{corollary} \label{c:weak_H_space_eff_hlgy}
\wHseh
\end{corollary}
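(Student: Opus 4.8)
The plan is to reduce the statement directly to Proposition~\ref{p:effective_homotopy_colimits}. By definition $\Pnew\htimes_B\Pnew=|\mcS_{\Pnew}|$ and $\Pnew\hvee_B\Pnew=\bdry_2|\mcS_{\Pnew}|$, where $\mcS_{\Pnew}$ is the square $\mcS_P$ of the previous subsection for $P=\Pnew$, that is, the square of inclusions whose four corners are $B$ (in the slot $B\times_BB$), $B\times_B\Pnew$, $\Pnew\times_BB$ and $\Pnew\times_B\Pnew$. Hence it suffices to equip each of these four simplicial sets with effective homology; Proposition~\ref{p:effective_homotopy_colimits} then supplies effective homology of the pair $(|\mcS_{\Pnew}|,\bdry_2|\mcS_{\Pnew}|)=(\Pnew\htimes_B\Pnew,\Pnew\hvee_B\Pnew)$, which is exactly the claim.

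Three of the corners are immediate. The simplicial set $B$ is equipped with effective homology by hypothesis, while $B\times_B\Pnew$ and $\Pnew\times_BB$ are canonically isomorphic, as simplicial sets over $B$, to $\Pnew$, which is produced together with effective homology already in Theorem~\ref{t:MP_tower}. For the remaining corner $\Pnew\times_B\Pnew$ I would run the argument used in the proof of Theorem~\ref{thm:main_theorem} (and, before it, of Theorem~\ref{t:MP_tower}), where the entirely analogous $X\times_B\Pnew$ is treated. Writing the Moore--Postnikov tower as $\Pnew\to\Pold\to\cdots\to P_1\to P_0=B$ and using the defining pullback square of each stage, the first projection $\Pnew\times_B\Pnew\to\Pnew$ factors as the composite
\[
\Pnew\times_B\Pnew\to\Pnew\times_B\Pold\to\cdots\to\Pnew\times_BP_1\to\Pnew\times_BP_0=\Pnew,
\]
in which each map $\Pnew\times_BP_i\to\Pnew\times_BP_{i-1}$ is the pullback of the Eilenberg--MacLane fibration $\delta\col E(\pi_i,i)\to K(\pi_i,i+1)$ along the composite $\Pnew\times_BP_{i-1}\to P_{i-1}\xra{k_i'}K(\pi_i,i+1)$. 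Thus $\Pnew\times_BP_i$ is a principal twisted cartesian product $(\Pnew\times_BP_{i-1})\times_\tau K(\pi_i,i)$, and it carries a free $G$-action because $\Pnew\times_BP_{i-1}$, being a $G$-subspace of $\Pnew\times P_{i-1}$, does. Feeding the (non-equivariant) effective homology of the Eilenberg--MacLane space $K(\pi_i,i)$ together with the effective homology of $\Pnew\times_BP_{i-1}$ (known inductively, with the base case $\Pnew\times_BP_0=\Pnew$) into \cite[Corollary~12]{Filakovsky}, and then invoking Theorem~\ref{t:vokrinek}, one equips every $\Pnew\times_BP_i$, and in particular $\Pnew\times_B\Pnew$, with equivariant effective homology.

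I do not anticipate a genuine obstacle once Proposition~\ref{p:effective_homotopy_colimits} is in hand: the work is essentially the bookkeeping of already-established effective-homology inputs. The point that wants the most care is the iterated-pullback description of $\Pnew\times_B\Pnew$; and, more as a conceptual remark than a difficulty, one should keep in mind that the reductions underlying this effective homology need not restrict to the \emph{strict} fibrewise wedge $\Pnew\vee_B\Pnew$, so that the strict pair $(\Pnew\times_B\Pnew,\Pnew\vee_B\Pnew)$ remains out of reach by this route. The homotopy-colimit model $|\mcS_{\Pnew}|$ is designed precisely so that Proposition~\ref{p:effective_homotopy_colimits} does apply --- and this is the whole reason for passing to weak \Hopf spaces.
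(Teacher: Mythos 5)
Your proposal is correct and follows essentially the same route as the paper: the paper's proof likewise equips $\Pnew\times_B\Pnew$ with effective homology by viewing it as an iterated pullback of Eilenberg--MacLane fibrations (the argument of the proof of Theorem~\ref{thm:main_theorem}, resting on the construction in Theorem~\ref{t:MP_tower}, \cite[Corollary~12]{Filakovsky} and Theorem~\ref{t:vokrinek}) and then invokes Proposition~\ref{p:effective_homotopy_colimits}. You merely spell out the details that the paper leaves as a reference, and your handling of the remaining corners of $\mcS_{\Pnew}$ is the intended one.
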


\begin{proof}
According to Addendum~\ref{a:MP_tower_pullback}, it is possible to equip $\Pnew\times_B\Pnew$ with effective homology. Thus, the result follows from the previous proposition.
\end{proof}

\begin{remark}
Alternatively, we may construct effective homology of $\Pnew\times_B\Pnew$ at the same time as we build the tower for $Y\ra B$ but, compared to $\Pnew$, with all Eilenberg--MacLane spaces and all Postnikov classes ``squared''.
\end{remark}

The following proposition will be used in Section~\ref{sec:H_space_constr} as a certificate for the existence of a weak \Hopf space structure on $\Pnew$; namely, it will guarantee that all relevant obstructions vanish.

\newcommand{\wHsc}{
For any Moore--Postnikov stage $\Pnew$, the pair $(\Pnew\htimes_B\Pnew,\Pnew\hvee_B\Pnew)$ is $(2\theconn+1)$-connected, where $d$ is the connectivity of the homotopy fibre of $\psi \col Y \ra B$ (or equivalently of $\psin \colon \Pnew \to B$).

In particular, the cohomology groups $H^*_G(\Pnew\htimes_B\Pnew,\Pnew\hvee_B\Pnew;\pi)$ of this pair with arbitrary coefficients $\pi$ vanish up to dimension $2\theconn+1$.}
\begin{proposition} \label{prop:weak_H_space_connect}
\wHsc
\end{proposition}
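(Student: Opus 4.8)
\smallskip\noindent
The plan is to reduce to a strict model and then argue fibrewise over $B$. First, by Lemma~\ref{l:whe_weak_strict} the natural maps $\Pnew\htimes_B\Pnew\to\Pnew\times_B\Pnew$ and $\Pnew\hvee_B\Pnew\to\Pnew\vee_B\Pnew$ obtained by collapsing the $\stdsimp2$-coordinate of $|\mcS_{\Pnew}|$, respectively of $\bdry_2|\mcS_{\Pnew}|$, are weak homotopy equivalences; since they commute with the inclusions $\vartheta$ and $\Pnew\vee_B\Pnew\hookrightarrow\Pnew\times_B\Pnew$, the five-lemma applied to the long exact sequences of the two pairs shows that they induce isomorphisms on all relative homotopy groups. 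So it suffices to prove that the pair $(\Pnew\times_B\Pnew,\Pnew\vee_B\Pnew)$ is $(2\theconn+1)$-connected.

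For this, let $F$ be the homotopy fibre of $\varphi$ and $F_\thedim$ the fibre of $\psin\col\Pnew\to B$; the latter is the $\thedim$-th Postnikov stage of $F$, hence $\theconn$-connected, and (as $\theconn\ge1$) simply connected, so $B$, $\Pnew$ and $\Pnew\times_B\Pnew$ are simply connected too. Now $\Pnew\times_B\Pnew\to B$ is a fibration (a pullback of $\psin$, post-composed with $\psin$) with fibre $F_\thedim\times F_\thedim$, whereas $\Pnew\vee_B\Pnew\to B$ is in general \emph{not} a fibration. I would therefore replace $\Pnew\vee_B\Pnew$, up to weak equivalence over $B$, by a fibration $W\to B$, with a factorisation $\Pnew\vee_B\Pnew\hookrightarrow W\hookrightarrow\Pnew\times_B\Pnew$ (e.g.\ the fibrewise mapping path space). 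Writing $\Pnew\vee_B\Pnew=\Pnew\cup_B\Pnew$ as the homotopy pushout of the span $\Pnew\xla{\onew}B\xra{\onew}\Pnew$ over $B$ and applying the gluing (Mather cube) theorem to this homotopy-cocartesian square lying over $B$, one gets that the homotopy fibre of $W\to B$ is the homotopy pushout of $F_\thedim\la*\ra F_\thedim$, i.e.\ $F_\thedim\vee F_\thedim$, and that $W\hookrightarrow\Pnew\times_B\Pnew$ restricts on fibres to the standard inclusion $F_\thedim\vee F_\thedim\hookrightarrow F_\thedim\times F_\thedim$.

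The key input is then the classical fact that $(F_\thedim\times F_\thedim,F_\thedim\vee F_\thedim)$ is $(2\theconn+1)$-connected: indeed $F_\thedim\wedge F_\thedim$ is $(2\theconn+1)$-connected by the Künneth theorem, so $H_i(F_\thedim\times F_\thedim,F_\thedim\vee F_\thedim;\bbZ)\cong\widetilde H_i(F_\thedim\wedge F_\thedim)=0$ for $i\le2\theconn+1$, and relative Hurewicz (everything simply connected) upgrades this to the vanishing of $\pi_i$ for $i\le2\theconn+1$. Since both $W\to B$ and $\Pnew\times_B\Pnew\to B$ are fibrations, the homotopy fibre of $W\hookrightarrow\Pnew\times_B\Pnew$ equals that of $F_\thedim\vee F_\thedim\hookrightarrow F_\thedim\times F_\thedim$, which is $2\theconn$-connected; hence $\pi_i(\Pnew\times_B\Pnew,W)=0$ for $i\le2\theconn+1$. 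This yields the $(2\theconn+1)$-connectivity of $(\Pnew\times_B\Pnew,\Pnew\vee_B\Pnew)$, and hence of $(\Pnew\htimes_B\Pnew,\Pnew\hvee_B\Pnew)$. The cohomological ``in particular'' is then formal: a $(2\theconn+1)$-connected pair has, by relative Hurewicz and a universal-coefficients argument over $\ZG$ (using that the relative cellular chains form a complex of free $\ZG$-modules), vanishing relative cohomology through degree $2\theconn+1$ with any coefficient system, in particular $H^*_G(\Pnew\htimes_B\Pnew,\Pnew\hvee_B\Pnew;\pi)=0$ for $*\le2\theconn+1$.

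The step I expect to be the main obstacle is precisely the fibrewise bookkeeping in the middle paragraph: because $\Pnew\vee_B\Pnew\to B$ is genuinely not a Kan fibration, one cannot pass to fibres naively and must replace it by a fibration and identify — via the cube theorem — its homotopy fibre with the fibrewise wedge $F_\thedim\vee F_\thedim$. Granting that identification, the rest is a routine application of the classical connectivity of $(F\times F,F\vee F)$ together with relative Hurewicz.
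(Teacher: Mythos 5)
Your argument is correct, but the middle step takes a genuinely different route from the paper's. Both proofs begin by replacing $(\Pnew\htimes_B\Pnew,\Pnew\hvee_B\Pnew)$ by the strict pair $(\Pnew\times_B\Pnew,\Pnew\vee_B\Pnew)$ via Lemma~\ref{l:whe_weak_strict}, and both ultimately rest on the classical $(2\theconn+1)$-connectivity of $(F\times F,F\vee F)$. Where you differ is in how you get from the fibrewise wedge to its fibre: you correctly observe that $\Pnew\vee_B\Pnew\to B$ need not be a Kan fibration, and you repair this by a fibrant replacement $W$ together with Mather's cube theorem to identify the homotopy fibre of $W\to B$ with $F\vee F$. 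The paper avoids this machinery entirely by exploiting \emph{minimality}: $\Pnew\to B$ is a minimal fibration (an iterated pullback of the minimal fibrations $\delta\col E(\pi,\thedimm)\to K(\pi,\thedimm+1)$), hence trivial over every simplex of $B$, and the trivialization can be chosen to respect the zero section; consequently $\Pnew\vee_B\Pnew\to B$ is already a fibre bundle with fibre $F\vee F$, and the five lemma applied to the map of fibre sequences finishes the job. Your approach buys generality (it works for any sectioned fibration, with the small extra check that the zero sections are cofibrations so the strict pushout is a homotopy pushout, which holds here since they are injective simplicial maps); the paper's buys economy, staying inside elementary simplicial technology. Your treatment of the cohomological ``in particular'' --- relative Hurewicz plus the freeness of the relative chains over $\ZG$ --- is essentially the paper's argument as well, which realizes the same idea concretely as a degreewise contraction (built as in Proposition~\ref{prop:projectivity}) that survives $\operatorname{Hom}_{\ZG}(-,\pi)$; phrasing it as ``universal coefficients'' is slightly loose over $\ZG$, but your parenthetical shows you mean the projectivity argument, which is the right one.
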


The proof can be found in Section~\ref{sec:leftover_proofs}.

\heading{Constructing weak \Hopf spaces}\label{sec:H_space_constr}

Prime examples of weak \Hopf spaces are the strict ones and, in particular, every fibrewise simplicial group is a weak \Hopf space. In the proceeding, we will make use of the trivial bundles $\Kn=B\times K(\pin,\thedim+1)$ and $\En=B\times E(\pin,\thedim)$. Since $\Kn$ is a fibrewise simplicial group, we have a whole family of weak \Hopf space structures on $\Kn$, one for each choice of a zero section $o\col B\ra\Kn$; namely, we define addition $z+_{o}w=z+w-o$ (the inverse then becomes $-_oz=-z+2o$). A similar formula defines an \Hopf space structure on $\En$ for every choice of its zero section. We denote the usual zero section by $0$.

We are now ready to prove the following crucial proposition.

\begin{proposition}\label{prop:weak_H_space_structure}
If $\Pnew$ is a pointed stable stage of a Moore--Postnikov tower with effective homology, with a zero section $\onew$, it is possible to construct a structure of a weak \Hopf space on $\Pnew$ with a strict right inverse.
\end{proposition}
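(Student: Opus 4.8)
The plan is to construct the weak \Hopf space structure by induction over the stages of the Moore--Postnikov tower, realizing the recipe of Section~\ref{sec:non_constr_H_space} but with the homotopy-colimit models $\htimes_B$ and $\hvee_B$ in place of the (possibly inaccessible) pair $(\Pnew\times_B\Pnew,\Pnew\vee_B\Pnew)$. The induction begins at a stage isomorphic either to $B$ (where $\pin=0$) or to $\Ln=B\times K(\pin,\thedim)$; these are fibrewise simplicial abelian groups for the given zero section --- one uses the shifted addition $z+_{\onew}w=z-\onew+w$, just as for $\Kn$ in Section~\ref{sec:H_space_constr} --- hence in particular weak \Hopf spaces with a strict right inverse. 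For the inductive step we are given $\Pnew=\Pold\times_{\Kn}\En$ with its zero section $\onew$; set $\oold=\pn\onew$, and use on $\En$, $\Kn$ the zero sections induced by $\onew$ (via the shifted group structures of Section~\ref{sec:H_space_constr}), so that the fibrewise Postnikov class $\kn$ vanishes on $\oold$ and $\qn\onew$ is the zero section of $\En$. By the inductive hypothesis, $\Pold$ carries a weak \Hopf space structure $\add_{\Pold}$ with a strict right inverse $\inv_{\Pold}$.

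I would define $\add\colon\Pnew\htimes_B\Pnew\to\Pnew\subseteq\Pold\times\En$ componentwise. Its $\Pold$-component is $\add_{\Pold}\circ(\pn\htimes_B\pn)$; by naturality of $\hnabla$ and $\vartheta$ together with the inductive identity $\add_{\Pold}\circ\vartheta=\hnabla$, this restricts on $\Pnew\hvee_B\Pnew$ to $\pn\circ\hnabla$, the $\Pold$-component of the fold map. For the $\En$-component I would first solve the lifting-extension problem
\[\xymatrix@C=36pt{
\Pold\hvee_B\Pold \ar[r]^-{o} \ar@{ >->}[d]_-\vartheta & \En \ar@{->>}[d]^-\delta \\
\Pold\htimes_B\Pold \ar[r]_-{m} \ar@{-->}[ru]^-{M_0} & \Kn
}\]
in which $m$ is the zero map on $\Pold\hvee_B\Pold$ and restricts on $2\times(\Pold\times_B\Pold)$ to $(x,x')\mapsto\kn(x+x')-\kn x-\kn x'$ (the first sum taken via $\add_{\Pold}$, the others in the fibrewise group $\Kn$); such an $m$ exists because $\kn\oold=o$, so that $\kn$ applied to the left- and right-zero homotopies and the second-order homotopy of $\add_{\Pold}$ supplies exactly the coherence data needed to fill the homotopy colimit. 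Composing $M_0$ with $\pn\htimes_B\pn$ and adjusting by the contributions of the two factor-projections $\qn\colon\Pnew\to\En$ gives the $\En$-component of $\add$, so that on the $2$-part the addition reads $(x,y)+(x',y')=(x+x',y+y'+M_0(x,x'))$ exactly as in~\eqref{eq:addition_on_pullback}. Using $\delta\qn=\kn\pn$ and $\delta M_0=m$ one checks that $\add$ maps into the pullback $\Pnew$ and satisfies $\add\circ\vartheta=\hnabla$, i.e.\ is a weak \Hopf space structure.

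Because $M_0$ is built at the level of $\Pold$ it depends only on the $\Pold$-coordinates, so formula~\eqref{eq:inverse_on_pullback} applies with $M_0$ in the role of $M$: the map $\inv_{\Pnew}(x,y)=(\inv_{\Pold}(x),\,-y-M_0(x,\inv_{\Pold}x))$ lands in $\Pnew$ (since $\delta(-y-M_0(x,\inv_{\Pold}x))=-\kn x-m(x,\inv_{\Pold}x)=\kn(\inv_{\Pold}x)$, using $x+\inv_{\Pold}x=o$ in $\Pold$ and $\kn\oold=o$) and is a strict right inverse: the $\Pold$-component of $(x,y)+\inv_{\Pnew}(x,y)$ is $o$, and the $\En$-component is $y+(-y-M_0(x,\inv_{\Pold}x))+M_0(x,\inv_{\Pold}x)=o$ as $\En$ is a fibrewise abelian group. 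Requiring $M_0$ to vanish also on the doubled zero section $(\oold,\oold)\colon B\to\Pold\times_B\Pold$ --- harmless, since this only enlarges the bottom-left subspace by a copy of $B$ lying over the zero section --- yields in addition $\inv_{\Pnew}\onew=\onew$.

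The single nontrivial point is the solvability of the lifting-extension problem for $M_0$. By Corollary~\ref{c:weak_H_space_eff_hlgy} applied to $\Pold$, the pair $(\Pold\htimes_B\Pold,\Pold\hvee_B\Pold)$ carries effective homology, so Lemma~\ref{l:lift_ext_one_stage} applies and a diagonal $M_0$ exists as soon as the obstruction in $H^{\thedim+1}_G(\Pold\htimes_B\Pold,\Pold\hvee_B\Pold;\pin)$ vanishes. It does, by Proposition~\ref{prop:weak_H_space_connect}: that pair is $(2\theconn+1)$-connected, and $\thedim+1\leq 2\theconn+1$ is precisely the stability hypothesis $\thedim\leq 2\theconn$ (enlarging the subspace by the copy of $B$ needed for the inverse does not affect this connectivity estimate). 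I expect this --- more precisely, the fact that it is exactly the bound $\thedim\leq 2\theconn$ that makes the obstruction vanish --- to be the only genuine difficulty; the rest is the bookkeeping of assembling maps into the iterated pullback $\Pnew$ and verifying the defining identities of a weak \Hopf space and of its inverse, which is formal once the zero sections are normalized as above.
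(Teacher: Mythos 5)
Your construction is essentially the paper's own proof: the same inductive step over the tower, the same non-additivity map $m$ lifted to $M$ through the Eilenberg--MacLane fibration via Lemma~\ref{l:lift_ext_one_stage}, with solvability supplied by exactly the two inputs you name (Corollary~\ref{c:weak_H_space_eff_hlgy} and Proposition~\ref{prop:weak_H_space_connect}), and the same pullback formulas \eqref{eq:addition_on_pullback} and \eqref{eq:inverse_on_pullback} for the addition and the strict right inverse; your normalization of the zero sections of $\En$ and $\Kn$ is just a relabeling of the paper's shifted additions $\add_{\qn\onew}$ and $\add_{\kno}$. One side remark is off, though: you cannot in general also require $M_0$ to vanish on the doubled zero section $(2,\oold,\oold)$, because $\delta M_0=m$ there forces $\delta M_0(2,\oold,\oold)=\kn(\oold+\oold)-\kn\oold$, and $\oold+\oold$ is only \emph{homotopic} to $\oold$ in a weak \Hopf space, so this obstruction need not vanish. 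Fortunately that extra condition is only used to get $\inv\onew=\onew$, which neither the proposition nor its later applications (which only need $\onew+(-\onew)=\onew$, a consequence of strictness of the right inverse) actually require; dropping it leaves your argument intact and identical to the paper's.
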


\begin{proof}
The proof is by induction and the base case is trivial since $P_0 = B$. Let $\Pold$ be a Moore--Postnikov stage and $\kn\col\Pold\ra\Kn$ the respective (fibrewise) Postnikov invariant. There is a pullback square
\begin{equation} \label{eq:consecutive_stages}
\xy *!C\xybox{\xymatrix@C=40pt{
\Pnew \ar[r]^{\qn} \ar[d]_{\pn} \pb & \En \ar[d]^\delta \\
\Pold \ar[r]^\kn & \Kn
}}\endxy
\end{equation}
of spaces over $B$. We denote the images of the zero section $\onew\col B\to\Pnew$ by $\oold=\pn\onew$ in $\Pold$, by $\qn\onew$ in $\En$ and by $\kno$ in $\Kn$.
In this way $\Kn$ is equipped with two sections, the zero section $0$ and
the composition $\kno$. We will see that the fact that these do not
coincide in general causes some technical problems.

Assume inductively that there is given a structure of a weak \Hopf space on $\Pold$.
\[\xymatrix@C=40pt{
\Pold\hvee_B\Pold \ar[rd]^-\hnabla \ar[d]_\vartheta \\
\Pold\htimes_B\Pold \ar[r]_-\add & \Pold
}\]
In analogy with Section~\ref{sec:non_constr_H_space}, we form the ``non-additivity'' map $m\col \Pold\htimes_B \Pold\ra \Kn$ as the difference of the following two compositions
\[\xymatrix@=15pt{
& \Pold\htimes_B\Pold \ar[ld]_{\add} \ar[rd]^{\kn\htimes \kn} & \\
\Pold \ar[rd]_\kn & - & \leftbox{\Kn}{{}\htimes_B\Kn} \ar[ld]^{\add_{\kno}} \\
& \Kn
}\]
where $\add_{\kno}$ is the \Hopf space structure on $\Kn$ whose zero section is $\kno$. We recall that it is given by $z+_{\kno}w=z+w-\kno$.

We now construct a weak \Hopf space structure on $\Pnew=\Pold\times_\Kn\En$ under our stability assumption $\thedim\leq 2\theconn$. The zero of this structure will be $\onew$. We compute a diagonal in
\begin{equation}\label{eq:lift_homotopy_additivity}
\xy *!C\xybox{\xymatrix@C=30pt{
\Pold\hvee_B\Pold \ar[r]^-0 \ar[d]_-\vartheta & \En \ar[d]^-\delta \\
\Pold\htimes_B\Pold \ar[r]_-m \ar@{-->}[ru]^-M & \Kn
}}\endxy
\end{equation}
by Lemma~\ref{l:lift_ext_one_stage}, whose hypotheses are satisfied according to Corollary~\ref{c:weak_H_space_eff_hlgy} and Proposition~\ref{prop:weak_H_space_connect}. The existence of $M$ says roughly that $\kn$ is additive up to homotopy. We define
\[\mathop{\add}\col \Pnew\htimes_B\Pnew\lra \Pnew=\Pold\times_\Kn\En\]
by its two components $\pn\add$ and $\qn\add$. The first component $\pn\mathop{\add}$ is uniquely specified by the requirement that $\pn\col \Pnew\ra \Pold$ is a homomorphism, i.e.\ by the commutativity of the square
\[\xymatrix@C=30pt{
\Pnew\htimes_B\Pnew \ar[r]^-\add \ar[d]_-{\pn\htimes\pn} & \Pnew \ar[d]^-{\pn} \\
\Pold\htimes_B\Pold \ar[r]_-\add & \Pold
}\]
The second component $\qn\mathop{\add}$ is given as a sum
\[\xymatrix@=15pt{
& \Pnew\htimes_B\Pnew \ar[ld]_{\qn\htimes\qn} \ar[rd]^{\pn\htimes\pn} & \\
\rightbox{\En\htimes_B{}}{\En} \ar[rd]_{\add_{\qn\onew}} & + & \leftbox{\Pold}{{}\htimes_B\Pold} \ar[ld]^M \\
& \En \\
}\]
The last two diagrams are a ``weak'' version of the formula \eqref{eq:addition_on_pullback}. A simple diagram chase shows that the two components are compatible and satisfy the condition of a weak \Hopf space; details can be found in Lemma~\ref{lem:calculations}.

Assuming that $\inv$ is constructed on $\Pold$ in such a way that $x+(-x)=\oold$, we define a right inverse on $\Pnew$ by the formula
\[-(x,c)=(-x,-c+2\qn\onew-M(2,x,-x)).\]
Again, $\inv$ is well defined and is a right inverse for $\add$; details can be found in Lemma~\ref{lem:calculations}.
\end{proof}

\section{Structures induced by weak \Hopf spaces}\label{sec:weak_H_space}

In this section, we prove Theorems~\ref{t:semi_eff}, \ref{thm:exact_sequence_short} and~\ref{thm:exact_sequence_long}. We start by general considerations.

\begin{definition}
We say that a lifting-extension problem
\[\xymatrix{
A \ar[r]^-f \ar@{ >->}[d]_-\iota & P \ar@{->>}[d]^-\psi \\
X \ar[r]_-g \ar@{-->}[ru] & B \ar@/_1pc/@{-->}[u]_-o
}\]
is \emph{pointed} if $P$ is pointed in such a way that $f = og\iota$.
\end{definition}

This condition is equivalent to $og$ being a solution; thus, $[X, P]^A_B$ in naturally pointed by the homotopy class $[og]$. Until further notice, we consider a pointed lefting-extension problem.

In the case of a strict \Hopf space $P$ over $B$, it is easy to define addition on $[X,P]^A_B$: simply put $[\ell_0]+[\ell_1]=[\ell_0+\ell_1]$. In particular, this defines addition on $[X,\Ln]^A_B$ which, under the identification of $[X,\Ln]^A_B$ with $H^\thedim_G(X,A;\pin)$, corresponds to the addition in the cohomology group.

It is technically much harder to equip $[X,P]^A_B$ with addition when the \Hopf space structure on $P$ is \emph{weak}. In this case, the restriction of $\ell_0+\ell_1$ to $A$ equals $f+f \neq f$ (note that the values of $f$ lie on the zero section and $o + o \neq o$) and thus does not represent an element of $[X,P]^A_B$. This problem is solved in Section~\ref{sec:strictness} using a strictification of weak \Hopf space structures, which serves as a compact definition of addition in $[X,P]^A_B$ and is also a useful tool in proofs that deal with the addition in $[X,P]^A_B$ on a global level, e.g.\ in deriving the exact sequence of Theorem~\ref{thm:exact_sequence_long}.

\heading{Strictification and addition of homotopy classes} \label{sec:strictness}

The point of this subsection is to describe a perturbation of a weak \Hopf space structure to one for which the zero is strict. We will then apply this to the construction of addition on $[X,\Pnew]^A_B$. Assume thus that we have a weak \Hopf space structure
\[\xymatrix@C=40pt{
P\hvee_BP \ar[d]_-\vartheta \ar[rd]^-{\hnabla} \\
P\htimes_BP \ar[r]_-\add & P
}\]
Form the following lifting-extension problem where the top map is $\add$ on $P\htimes_BP$ and $\nabla\pr_2$ on $\bdry_2\stdsimp{2}\times(P\vee_BP)$. Lemma~\ref{l:whe_weak_strict} shows that the map on the left is a weak homotopy equivalence and thus a diagonal exists (but in general not as a computable map).
\[\xymatrix@C=50pt{
(P\htimes_BP)\cup\big(\bdry_2\stdsimp{2}\times(P\vee_BP)\big) \ar@{ >->}[d]_-\sim \ar[r]^-{[\add,\nabla\pr_2]} & P \ar@{->>}[d]^-\psi \\
\stdsimp{2}\times(P\times_BP) \ar[r]  \ar@{-->}[ur] & B
}\]
The restriction of the diagonal to $0\times(P\times_BP)$ is then a (strict!) \Hopf space structure which we denote $\hadd$ with the corresponding addition $\hplus$. The restriction to $\bdry_1\stdsimp2\times(P\times_BP)$ is a homotopy $\hplus \sim +$.

\begin{definition}
Let $P$ be a weak \Hopf space with addition $\add$. Let $\hadd$ be its perturbation to a strict \Hopf space structure as above. We define the addition in $[X,P]^A_B$ by $[\ell_0]+[\ell_1]=[\ell_0\hplus\ell_1]$. Below, we prove that it is independent of the choice of a perturbation.
\end{definition}

Composing the above homotopy $\hplus \sim +$ with a pair of solutions $(\ell_0, \ell_1)$, we obtain $\ell_0\hplus\ell_1\sim\ell_0+\ell_1$ whose restriction to $A$ is the left zero homotopy $\lambda f \col f=f\hplus f\sim f+f$. We will use this observation as a basis for the computation of the homotopy class of $\ell_0\hplus\ell_1$, since we do not see a way of computing $\hadd$ directly -- it seems to require certain pairs to have effective homology and we think that this might not be the case in general.

Restricting to the case $P=\Pnew$ of Moore--Postnikov stages, the addition in $[X,\Pnew]^A_B$ is computed in the following algorithmic way. Let $\ell_0,\ell_1\col X\to\Pnew$ be two solutions and consider $\ell_0+\ell_1$ whose restriction to $A$ equals $f+f$. Extend the left zero homotopy $\lambda f \col f\sim f+f$ on $A$ to a homotopy $\sigma\col\ell\sim\ell_0+\ell_1$ on $X$. It is quite easy to see that the resulting map $\ell$ is unique up to homotopy relative to $A$.\footnote{%
	Given two such homotopies, one may form out of them a map $(\horn22\times X)\cup(\stdsimp2\times A)\to\Pnew$, whose extension to $\stdsimp2\times X$, fibrewise over $B$, gives on $\bdry_2\stdsimp2\times X$ the required homotopy.
} Since $\ell_0\hplus\ell_1$ is also obtained in this way, this procedure gives correctly $[\ell]=[\ell_0]+[\ell_1]\in[X,\Pnew]^A_B$. From the algorithmic point of view, this is well behaved -- if $(X,A)$ is equipped with effective homology, we may extend homotopies by Proposition~\ref{prop:homotopy_lifting}. This proves the first half of the following proposition.

\begin{proposition}\label{prop:addition_on_homotopy_classes}
If $(X,A)$ is equipped with effective homology and $\Pnew$ is given a weak \Hopf space structure, then there exists an algorithm that computes, for any two solutions $\ell_0\comma\ell_1\col X\ra\Pnew$ of a pointed lifting-extension problem, a representative of $[\ell_0]+[\ell_1]$. If the weak \Hopf space structure has a strict right inverse, then the computable $\onew g+(-\ell)$ is a representative of $-[\ell]$.
\end{proposition}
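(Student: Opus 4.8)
The plan is as follows. For the first statement, the addition $[\ell_0]+[\ell_1]=[\ell_0\hplus\ell_1]$ is defined via the strictified structure $\hadd$, but as explained in the text we cannot compute $\hadd$ directly. Instead I would use the characterization: $\ell_0\hplus\ell_1$ is homotopic, relative to $A$, to any map $\ell\col X\to\Pnew$ obtained by extending the left zero homotopy $\lambda(\onew)\col\onew\sim\onew+\onew$ from $A$ to a homotopy $\sigma\col\ell\sim\ell_0+\ell_1$ defined on all of $X$. Concretely, first form the map $\ell_0+\ell_1\col X\to\Pnew$ using the (computable) addition map $\add$ on $2\times(\Pnew\times_B\Pnew)$; note its restriction to $A$ equals $\onew+\onew$, not $\onew$. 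Then consider the map $(1\times X)\cup(\stdsimp1\times A)\to\Pnew$ which is $\ell_0+\ell_1$ on $1\times X$ and $\lambda(\onew)$ on $\stdsimp1\times A$, and extend it to a homotopy $\sigma\col\stdsimp1\times X\to\Pnew$; this is a homotopy-lifting problem of the form handled by Proposition~\ref{prop:homotopy_lifting}, whose hypotheses are met since $(X,A)$ has effective homology. The restriction of $\sigma$ to $0\times X$ is the desired representative $\ell$; it is zero on $A$ because $\lambda(\onew)$ starts at $\onew$.

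The one point requiring an argument is that $[\ell]$ is well defined, i.e.\ independent of the chosen extension $\sigma$ (and hence agrees with $[\ell_0\hplus\ell_1]$). Given two such extensions $\sigma,\sigma'$, one assembles a map on $(\horn22\times X)\cup(\stdsimp2\times A)\to\Pnew$ whose $\bdry_0$- and $\bdry_2$-faces carry $\sigma$ and $\sigma'$, whose restriction to $\stdsimp2\times A$ is a fibrewise-constant (in the appropriate sense) homotopy built from $\lambda(\onew)$, and extends this to $\stdsimp2\times X$ by Proposition~\ref{prop:homotopy_concatenation}; the $\bdry_1$-face then gives the required homotopy between the two representatives relative to $A$. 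This is exactly the computation sketched in the footnote of Section~\ref{sec:strictness}, so I would simply invoke it. The fact that this common value is $[\ell_0\hplus\ell_1]$ follows because the strictification homotopy $x_0\hplus x_1\sim x_0+x_1$, when composed with $(\ell_0,\ell_1)$, is one admissible choice of $\sigma$ (its restriction to $A$ is precisely $\lambda(\onew)$, as observed in the text).

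For the second statement, suppose the weak \Hopf space structure has a strict right inverse $\inv\col\Pnew\to\Pnew$, $x\mapsto -x$. Since $-\onew=\onew$ strictly, the map $-\ell\col X\to\Pnew$ is again zero on $A$. Applying the algorithm of the first part to $\ell$ and $-\ell$ produces a representative of $[\ell]+[-\ell]$ obtained by extending $\lambda(\onew)$ to a homotopy ending at $\ell+(-\ell)$. But the strictness of the right inverse means $\ell+(-\ell)$ is already zero on $A$ — in fact, using homotopy associativity and homotopy commutativity of the weak \Hopf space structure (which hold by the properties established for strict, hence also for their perturbations, or can be quoted from Theorem~\ref{t:existence_of_H_space_structures} applied to the associated strict structure $\hadd$), one shows $[\ell]+[{-\ell}]=0$ in $[X,\Pnew]^A_B$, so $[-\ell]=-[\ell]$. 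Since $\inv$ is computable and $\onew g$ is the zero map, the map $\onew+(-\ell)$ — which equals $-\ell$ up to the left-zero homotopy and which is directly computable — represents $-[\ell]$.

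I expect the main obstacle to be purely bookkeeping: verifying that the various partial maps on products such as $(\horn22\times X)\cup(\stdsimp2\times A)$ are genuinely fibrewise over $B$, land on the zero section where required, and are compatible on overlaps, so that Propositions~\ref{prop:homotopy_lifting} and~\ref{prop:homotopy_concatenation} apply verbatim. There is no conceptual difficulty — the heavy lifting (existence of the weak \Hopf structure, its homotopy-theoretic properties, effective homology of the relevant pairs) has already been done — but the diagram chases identifying the strictification homotopy as an admissible $\sigma$, and checking the right-inverse claim via associativity/commutativity, need to be carried out carefully.
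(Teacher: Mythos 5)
The first half of your proposal is exactly the paper's argument: compute $\ell_0+\ell_1$, extend the left zero homotopy $\lambda(\onew)$ from $A$ to a homotopy ending at $\ell_0+\ell_1$ via Proposition~\ref{prop:homotopy_lifting}, observe uniqueness of the other end up to homotopy rel~$A$, and identify the result with $\ell_0\hplus\ell_1$ because the strictification homotopy restricts to $\lambda(\onew)$ on $A$. No objection there.

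The second half has a genuine gap. ``Strict right inverse'' means only that $x+(-x)=\onew$ holds on the nose; it does \emph{not} give $-\onew=\onew$, and for the inverse actually constructed in Proposition~\ref{prop:weak_H_space_structure} one has $-\onew=(\oold,\,\qn\onew-M(2,\oold,\oold))$, where $M$ is the lift from \eqref{eq:lift_homotopy_additivity}. The point $(2,\oold,\oold)$ does not lie in $\Pold\hvee_B\Pold$ (the fibrewise wedge contains nothing over the vertex $2$), so $M(2,\oold,\oold)$ has no reason to vanish and $-\onew\neq\onew$ in general. Consequently $-\ell$ need not be zero on $A$, and $[-\ell]$ is not even an element of $[X,\Pnew]^A_B$; this is precisely why the statement (and the paper's proof) work with $o+(-\ell)$, which is zero on $A$ because strictness gives $\onew+(-\onew)=\onew$.

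Moreover, even for $o+(-\ell)$, appealing to homotopy associativity and commutativity of the (strictified) structure does not identify it as a representative of $-[\ell]$. Since addition on $[X,\Pnew]^A_B$ is computed by extending $\lambda(\onew)$ from $A$, to prove $[\ell]+[o+(-\ell)]=0$ you must exhibit a homotopy $\onew\sim\ell+(\onew+(-\ell))$ whose restriction to $A$ is exactly the left zero homotopy $\lambda(\onew)$; abstract existence of inverses in the group gives no such rel-$A$ control. The paper produces this homotopy explicitly as $\ell+\lambda(-\ell)$, which starts at $\ell+(-\ell)=\onew$ by strictness, and then corrects its restriction to $A$, namely $\onew+\lambda(-\onew)$, to $\lambda(\onew)$ by the second-order homotopy of Lemma~\ref{lem:commuting_left_zero_homotopies} together with the homotopy extension property. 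Your proposal omits both this specific homotopy and the correction step, and without them the claim that the computable map $o+(-\ell)$ represents $-[\ell]$ is not established.
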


\begin{proof}
The formula $\ell\mapsto\onew g+(-\ell)$ prescribes a mapping $[X,\Pnew]^A_B\ra[X,\Pnew]^A_B$ since its restriction to $A$ equals $f+(-f)=f$. It is slightly more complicated to show that it is an inverse for our perturbed version of the addition. To this end, we have to exhibit a homotopy
\[\onew g\sim\ell+(\onew g+(-\ell))\]
that agrees on $A$ with the left zero homotopy $\lambda f$. We start with the left zero homotopy $\lambda(-\ell)\col-\ell\sim\onew g+(-\ell)$ and add $\ell$ to it on the left to obtain $\ell+\lambda(-\ell)\col\onew g\sim\ell+(\onew g+(-\ell))$. By Lemma~\ref{lem:commuting_left_zero_homotopies}, its restriction to $A$, i.e.\ $f+\lambda(-f)$, is homotopic to the left zero homotopy $\lambda(f+(-f))=\lambda f$. By extending this second order homotopy from $A$ to $X$, we obtain a new homotopy $\onew g\sim \ell+(\onew g+(-\ell))$ that agrees with the left zero homotopy on $A$, as desired.
\end{proof}

To make the statement of the following lemma understandable, we use $\onew$ to denote the appropriate value of $\onew$, i.e.\ they are abbreviations for $\onew \psin(x)$. Applying to $x = -f$ as in the previous proof, this equals $\onew \psin (-f) = f$.

\begin{lemma}\label{lem:commuting_left_zero_homotopies}
The homotopies $\lambda(\onew+x),\onew+\lambda(x)\col\onew+x\sim \onew+(\onew+x)$ are homotopic relative to $\partial\stdsimp{1}\times\Pnew$.
\end{lemma}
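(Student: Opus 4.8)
The plan is to produce an explicit second-order homotopy, i.e. a map $\stdsimp{2}\times(\partial\stdsimp{1}\times\Pnew)\cup\ldots\to\Pnew$ of the appropriate shape, by reducing the statement to a lifting problem that we can solve directly. First I would set up notation: write $\mu_x=\lambda(\onew+x)$ and $\nu_x=\onew+\lambda(x)$ for the two homotopies from $\onew+x$ to $\onew+(\onew+x)$ in question; both restrict, on $\partial\stdsimp1\times\Pnew$, to the appropriate constant/zero data, so a homotopy relative to $\partial\stdsimp1\times\Pnew$ between them is a map from $\stdsimp2\times(\stdsimp1\times\Pnew)$ that on $\horn21\times(\stdsimp1\times\Pnew)$ is prescribed by $\mu$ and $\nu$ and on $\stdsimp2\times(\partial\stdsimp1\times\Pnew)$ is the constant (degenerate) second-order homotopy. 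The target being the Moore--Postnikov stage $\Pnew\fib B$, all of this takes place fibrewise over $B$.

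The key point is that $\lambda$ is, by definition of the weak \Hopf space structure, the restriction of the single map $\add\col\Pnew\htimes_B\Pnew\to\Pnew$ to the face $\bdry_1\stdsimp2\times P_{\mathrm{right}}$; and $\nu_x=\onew+\lambda(x)$ is obtained from the \emph{same} map $\add$ by first applying $\add$ in the second variable. So the comparison of $\mu$ and $\nu$ amounts to a compatibility between two ways of iterating $\add$, which is precisely the sort of thing controlled by the second-order homotopy $H$ built into $\add\col \Pnew\htimes_B\Pnew\to\Pnew$ (the part living on $\stdsimp2\times(\Pnew\times_B\Pnew)$ via the universal property of $|\mcS_\Pnew|$), together with the fact that $\add$ is a \emph{fibrewise} map over $B$. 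Concretely, I would assemble the prescribed boundary data into a map
\[
(\horn21\times(\stdsimp1\times\Pnew))\cup(\stdsimp2\times(\partial\stdsimp1\times\Pnew))\longrightarrow\Pnew
\]
and observe that this map is already defined on a subspace of $\stdsimp2\times(\stdsimp1\times\Pnew)$ onto which the latter equivariantly and fibrewise deformation retracts (the inclusion $\horn21\cof\stdsimp2$ is an anodyne, hence trivial cofibration, and crossing with the pair $(\stdsimp1\times\Pnew,\partial\stdsimp1\times\Pnew)$ preserves this after a standard prism/retraction argument). Since $\Pnew\fib B$ is a Kan fibration, the lift exists, giving the desired homotopy relative to $\partial\stdsimp1\times\Pnew$.

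The main obstacle, and where I would spend the most care, is bookkeeping: getting the faces and the simplicial directions to match up, so that the prescribed map on $\horn21\times(\stdsimp1\times\Pnew)$ genuinely has $\mu_x$ on one face and $\nu_x$ on the other with compatible overlap on $\bdry_2\stdsimp2\times(\stdsimp1\times\Pnew)$ (this is where the second-order homotopy $H$ inside $\add$, or a degeneracy thereof, is needed to fill the remaining face), and checking that everything is constant in the appropriate direction over $\partial\stdsimp1\times\Pnew$. None of the individual verifications is deep — they are diagram chases in the spirit of Lemma~\ref{lem:calculations} — but the indexing is fiddly; I would organize it by naming the two non-degenerate simplices of the relevant product, writing out their faces once, and then checking each required identity against the definition of the weak \Hopf space structure and the fibrewise hypothesis. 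Note that, unlike the computational statements elsewhere in the paper, this lemma is purely existential, so no effective-homology input is required and the Kan condition alone suffices.
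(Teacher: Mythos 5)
The proposal hinges on a structure that does not exist. You invoke ``the second-order homotopy $H$ built into $\add$ (the part living on $\stdsimp{2}\times(\Pnew\times_B\Pnew)$ via the universal property of $|\mcS_{\Pnew}|$)'' both to make the horn data compatible on the overlap and to fill the remaining face. But in the definition of $\Pnew\htimes_B\Pnew=|\mcS_{\Pnew}|$ the $\stdsimp{2}\times Z$ piece has $Z=B\times_BB\cong B$: the only second-order homotopy contained in a weak \Hopf space structure lives over the zero section and relates $\lambda(\onew)$, $\rho(\onew)$ and the constant homotopy at $\onew$. It gives no relation between $\lambda(\onew+x)$ and $\onew+\lambda(x)$ for a general $x\in\Pnew$ --- which is exactly what the lemma asserts, so the compatibility you defer to it is the statement being proved. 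Moreover, even with compatible data, lifting along the anodyne inclusion $(\horn{2}{1}\times(\stdsimp{1}\times\Pnew))\cup(\stdsimp{2}\times(\partial\stdsimp{1}\times\Pnew))\cof\stdsimp{2}\times\stdsimp{1}\times\Pnew$ against the Kan fibration $\Pnew\fib B$ only produces \emph{some} homotopy on the missing face; it cannot show that the two \emph{given} homotopies are homotopic relative to $\partial\stdsimp{1}\times\Pnew$. The lemma is not a formal consequence of fibrancy: two fibrewise homotopies with common endpoints are in general not rel-endpoint homotopic (the obstructions sit in groups of the type $H^{\thedimm-1}_G(\Pnew;\pi_\thedimm)$, which need not vanish here), so your closing claim that ``the Kan condition alone suffices'' cannot be correct.

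What is missing is the one identity special to $\lambda$ that the paper's proof uses: since $\lambda$ is a single map $\stdsimp{1}\times\Pnew\to\Pnew$, defined on all of $P_{\mathrm{right}}\cong\Pnew$ and not just over the zero section, you may substitute the homotopy $\lambda$ into itself. The interchange square $\stdsimp{1}\times\stdsimp{1}\times\Pnew\to\Pnew$, $(s,t,x)\mapsto\lambda(s,\lambda(t,x))$, has edges $\lambda(x)$, $\lambda(\onew+x)$, $\lambda(x)$ and $\onew+\lambda(x)$, hence exhibits the concatenation of $\lambda(x)$ with $\lambda(\onew+x)$ and the concatenation of $\lambda(x)$ with $\onew+\lambda(x)$ as homotopic relative to endpoints; cancelling the common initial segment $\lambda(x)$ (this is where Kan filling, in the spirit of Proposition~\ref{prop:homotopy_concatenation}, legitimately enters, and everything is automatically equivariant and fibrewise because $\lambda$ is) gives the lemma. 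So the repair is to replace your lifting scheme by this explicit formula; without it, or some equivalent use of the fact that both homotopies are instances of the same natural left zero homotopy, the argument does not go through.
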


\begin{proof}
We concatenate the two homotopies from the statement with the left zero homotopy $\lambda(x)\col x\sim \onew+x$ and it is then enough to show that the two concatenations are homotopic. The homotopy between them is $\stdsimp{1}\times\stdsimp{1}\times\Pnew\ra\Pnew$, $(s,t,x)\mapsto\lambda(s,\lambda(t,x))$.
\end{proof}

\heading{Solution of pullback problems}

So far, we have discussed only pointed Moore--Postnikov stages and pointed lifting-extension problems. We will now describe, in a general stable situation of Theorem~\ref{t:semi_eff}, a way of passing from a solution $\onew \colon X \to \Pnew$ to a pointed Postnikov stage and a pointed lifting-extension problem.

First we describe a general procedure for replacing, via pullbacks, lifting-extension problems by equivalent ones. Suppose that we have a diagram
\[\xymatrix{
A \ar[r] \ar@{ >->}[d] & \tPnew \ar[r] \ar@{->>}[d] \pb & \Pnew \ar@{->>}[d] \\
X \ar[r] & \tB \ar[r] & B
}\]
in which the right square is a pullback square. Then diagonals in the left square are in bijection with diagonals in the composite square and the same applies to homotopies. Thus,
\[[X, \tPnew]^A_{\tB} \xlra\cong [X, \Pnew]^A_B.\]

We will now apply this to a special factorization of $g \colon X \to B$ with the first map the identity, so that $\tB=X$, and the induced pullback square:
\[\xymatrix{
A \ar[r]_-\tfn \ar@/^1pc/[rr]^-{\fn} \ar@{ >->}[d]_-\iota & \tPnew \ar[r] \ar@{->>}[d]_-\tpsin \pb & \Pnew \ar@{->>}[d]^-\psin \\
X \ar[r]^-\id \ar@/_1pc/[rr]_-g & X \ar[r] & B
}\]
Viewing $\tPnew = X \times_B \Pnew$ as a subspace of $X \times \Pnew$, the map $\tpsin$ becomes the projection onto the left factor and $\tfn = (\iota, \fn)$. A diagonal $\onew \colon X \to \Pnew$ in the composite square then induces a diagonal $\tonew = (\id, \onew) \colon X \to \tPnew$ in the left square and this is simply a section of $\tpsin$. In addition, it is easy to verify that $\tfn = \tonew \iota$, i.e.\ $\tfn$ takes values on this section.

\begin{proposition} \label{prop:homotopy_classes_pullback}
There is a bijection $[X, \Pnew]^A_B \cong [X, \tPnew]^A_\tB$, computable in both directions, with the latter lifting-extension problem pointed.
\end{proposition}

\begin{remark}
There is a different factorization $g \colon X \xra{\onew} \Pnew \xra{\psin} B$ and the induced pullback $\Pnew \times_B \Pnew \to \Pnew$ also admits a section by the diagonal map. The advantage of this pullback is that it depends only on $\psi \colon Y \to B$ and not on $A$, $X$, $f$ or $g$. On the other hand, it seems bigger than the pullback $\tPnew$ proposed above. An \Hopf space structure on $\Pnew \times_B \Pnew$ can be interpreted directly as a structure on $\Pnew$, given by a ternary operation and related to heaps; this approach has been developed in \cite{heaps2}.
\end{remark}

\addtocounter{equation}{-1}
{\renewcommand{\theequation}{\ref{t:semi_eff} (restatement)}
\begin{theorem}
\se
\end{theorem}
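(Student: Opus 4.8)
The plan is to assemble the ingredients already prepared in Sections~\ref{s:weak_H_spaces} and~\ref{sec:weak_H_space}; essentially no new construction is required, only careful bookkeeping of the group axioms. The first step is to invoke Proposition~\ref{prop:weak_H_space_structure}: since $\thedim\leq 2\theconn$ and $\Pnew$ carries the zero section $\onew$, it produces a weak fibrewise \Hopf space structure $\add\col\Pnew\htimes_B\Pnew\to\Pnew$ with a strict right inverse, having $\onew$ as its zero, and $\fn$ takes values on $\onew$ by hypothesis. The second step is to apply the strictification of Section~\ref{sec:strictness}: it yields a \emph{strict} fibrewise \Hopf space structure $\hadd$ on $\Pnew$ (only its existence is needed, not its computability) together with the comparison homotopy $\ell_0\hplus\ell_1\sim\ell_0+\ell_1$ whose restriction to the all-zero input is the left zero homotopy $\lambda(\onew)$. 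I would then define the addition on $[X,\Pnew]^A_B$ by $[\ell_0]+[\ell_1]\defeq[\ell_0\hplus\ell_1]$, with the class of the zero map $o=\onew g$ as neutral element, and take the computable map $o+(-\ell)$ as the additive inverse of $[\ell]$ (legitimate by Proposition~\ref{prop:addition_on_homotopy_classes}).

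The next block of work is to check that this really is an abelian group. Well-definedness is the routine argument: a fibrewise homotopy rel $A$ between $\ell_0$ and $\ell_0'$, paired with a fixed $\ell_1$ and post-composed with $\hadd$, gives a fibrewise homotopy $\ell_0\hplus\ell_1\sim\ell_0'\hplus\ell_1$ which over $A$ degenerates to the constant homotopy at $o\hplus o=o$, hence is rel $A$; the second variable is symmetric. Since $\hadd$ is strict, $\ell\hplus o=\ell=o\hplus\ell$ on the nose, so $[o]$ is a strict two-sided unit. For associativity and commutativity I would appeal to Theorem~\ref{t:existence_of_H_space_structures}: because $\hadd$ is a fibrewise \Hopf space structure on the Moore--Postnikov stage $\Pnew$ and $\thedim\leq 2\theconn$, it is automatically homotopy associative and homotopy commutative, by homotopies that are constant on the all-zero input; pulling these back along $(\ell_0,\ell_1,\ell_2)$, respectively $(\ell_0,\ell_1)$, produces fibrewise homotopies between the two bracketings, respectively between $\ell_0\hplus\ell_1$ and $\ell_1\hplus\ell_0$, which are constant at $o$ over $A$ and hence rel $A$. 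Invertibility is exactly the content of Proposition~\ref{prop:addition_on_homotopy_classes} (this is where the strict right inverse of $\add$ enters). I would also record, to honour the remark in Section~\ref{sec:strictness}, that the group structure does not depend on the choice of strictification: $\ell_0\hplus\ell_1$ is homotopic rel $A$ to any extension over $X$ of $\lambda(\onew)$ to a homotopy ending at $\ell_0+\ell_1$, and such an extension is unique up to homotopy rel $A$.

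Finally I would verify semi-effectivity in the sense of Section~\ref{s:abelops}: the representatives are algorithms computing diagonals $X\to\Pnew$ (fibrewise maps sending $A$ to the zero section), the neutral element $o=\onew g$ is plainly computable, and Proposition~\ref{prop:addition_on_homotopy_classes} --- which applies precisely because $(X,A)$ carries effective homology --- supplies an algorithm that, given $\ell_0,\ell_1$, extends $\lambda(\onew)$ from $A$ to a fibrewise homotopy on $X$ terminating at $\ell_0+\ell_1$ (using the homotopy-extension algorithm of Proposition~\ref{prop:homotopy_lifting}) and outputs its initial map, a representative of $[\ell_0]+[\ell_1]$; likewise $o+(-\ell)$ is a computable representative of $-[\ell]$. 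The one point I expect to need genuine care is ensuring that every homotopy produced above --- those from Theorem~\ref{t:existence_of_H_space_structures} and those implicit in the strictification --- is \emph{fibrewise} and \emph{relative to $A$}; in each case this reduces to the homotopy being constant on the zero input, so that over $A$, where every diagonal equals $o$, it collapses to the constant homotopy at $o=o\hplus o$. All the remaining points are immediate citations of results established earlier.
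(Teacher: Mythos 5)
Your proposal is correct and follows essentially the same route as the paper's own proof, which likewise assembles Proposition~\ref{prop:weak_H_space_structure}, the strictification of Section~\ref{sec:strictness}, Theorem~\ref{t:existence_of_H_space_structures} for homotopy associativity/commutativity/inverse, and Proposition~\ref{prop:addition_on_homotopy_classes} for computability of the operations. The extra detail you supply (well-definedness rel $A$ and the fibrewise/rel-$A$ bookkeeping) is exactly what the paper leaves implicit, and your handling of it is sound.
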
}

\begin{proof}
This is a corollary of a collection of results obtained so far. By Proposition~\ref{prop:homotopy_classes_pullback}, we may replace the Moore--Postnikov tower and the given lifting-extension problem by their pointed versions. By Proposition~\ref{prop:weak_H_space_structure}, it is possible to construct on $\tPnew$ the structure of a weak \Hopf space. By results of this subsection, it is possible to strictify this structure, making $\tPnew$ into an \Hopf space. According to Theorem~\ref{t:existence_of_H_space_structures}, it is homotopy associative, homotopy commutative and with a right homotopy inverse; consequently, $[X,\Pnew]^A_B \cong [X,\tPnew]^A_\tB$ is an abelian group. By Proposition~\ref{prop:addition_on_homotopy_classes}, it is possible to compute the addition and the inverse in the latter group on the level of representatives, making it into a semi-effective abelian group. Since the isomorphism is computable in both directions by Proposition~\ref{prop:weak_H_space_structure}, $[X,\Pnew]^A_B$ also becomes a semi-effective abelian group.
\end{proof}

\heading{Proof of Theorems~\ref{thm:exact_sequence_short} and~\ref{thm:exact_sequence_long}} \label{sec:proof_of_exact_sequence}

We will need the fact that $\pn \colon \Pnew \to \Pold$ is a principal fibration with fibrewise action of $\Ln=B\times K(\pin,\thedim)$ (in fibrewise world, an action is a map $\Pnew \times_B \Ln \to \Pnew$). Thinking of $\Pnew$ as a subset of $\Pold\times\En$, an element $z\in\Ln$ acts on $(x,c)\in\Pnew$ by $(x,c)+z\defeq(x,c+z)$ where the sum $c+z$ is taken within the fibrewise simplicial group $\En$.

\addtocounter{equation}{-1}
{\renewcommand{\theequation}{\ref{thm:exact_sequence_short} (restatement)}
\begin{theorem}
\exseqshort
\end{theorem}}

\begin{proof}
Both claims will be proved as a part of the proof of the following theorem.
\end{proof}

\addtocounter{equation}{-1}
{\renewcommand{\theequation}{\ref{thm:exact_sequence_long} (restatement)}
\begin{theorem}
\exseqlong
\end{theorem}}

\begin{proof}
We start by defining the map $\jnst$: on the level of maps, $\jnst(\zeta) = \onew + \zeta$ (the action of $\Ln$ on $\Pnew$) and it passes to homotopy classes. We then obtain a sequence
\[[X,\Ln]^A_B \xlra{\jnst} [X,\Pnew]^A_B \xlra{\pnst} [X,\Pold]^A_B \xlra{\knst} [X,\Kn]^A_B,\]
whose exactness at the second term is simple. To prove exactness at the third term, we recall that $0 \in [X, \Kn]^A_B$ is the only element in the image of $\delta_*$, as remarked after Lemma~\ref{l:fully_eff_cohlgy_snd}. Thus, $[\ell_{n-1}] \in \im \pnst$ iff $\ell_{n-1}$ lifts to $\Pnew$ iff $\kn\ell_{n-1}$ lifts to $\En$ iff $[\kn\ell_{n-1}] \in \im \delta_*$ iff $\knst[\ell_{n-1}] = 0$.

It is possible to extend the sequence to the left by $[\stdsimp{1}\times X,P]^{(\partial\stdsimp{1}\times X)\cup(\stdsimp{1}\times A)}_B$, the set of homotopy classes of fibrewise homotopies $\stdsimp 1 \times X \to \Pold$ from $\oold$ to $\oold$ relative to $A$; its base point is the constant homotopy at $\oold$. First, we describe
\[\connn \colon [\stdsimp{1}\times X,\Pold]^{(\partial\stdsimp{1}\times X)\cup(\stdsimp{1}\times A)}_B\ra[X,\Ln]^A_B.\]
Let $h\col \stdsimp{1}\times X\ra\Pold$ be a homotopy as prescribed above. Choose a lift $\widetilde h$ of $h$ along $\pn\col\Pnew\ra\Pold$ that starts at $\onew$ and is relative to $A$ (this can be carried out in an algorithmic way by Proposition~\ref{prop:homotopy_lifting}). Restricting to the end of the homotopy prescribes a map $\widetilde h_\mathrm{end}\col X\ra\Pnew$ that lies over $\oold$ and is thus of the form $\widetilde h_\mathrm{end} = \onew + \zeta$ for a unique map $\zeta \colon X\ra\Ln$. We set $\connn[h] = [\zeta]$; this is well defined by Lemma~\ref{l:connecting_homomorphism}.

By definition, $\jnst\connn[h] = [\widetilde h_\mathrm{end}]$ and the homotopy $\widetilde h$ shows this equal to $[\onew]$. The exactness is also easy -- a homotopy $\widetilde h \colon \onew \sim \onew + \zeta$ in $\Pnew$ projects down to $\Pold$ to a homotopy $\stdsimp{1}\times X\ra\Pold$ representing a preimage of $[\zeta]$ (since its lift is $\widetilde h$ with the appropriate end $\onew + \zeta$). To summarize, we have an exact sequence of pointed sets
\[[\stdsimp1\times X,\Pold]^{(\partial\stdsimp{1}\times X)\cup(\stdsimp{1}\times A)}_B\ra[X,\Ln]^A_B\ra[X,\Pnew]^A_B\ra[X,\Pold]^A_B\ra[X,\Kn]^A_B.\]

Our next aim is to show that the maps in this sequence are homomorphisms of groups. By replacing the stages $\Pold$, $\Pnew$ by their pullbacks $\tPold$, $\tPnew$ if necessary, we may assume that these stages are pointed and that so is the lifting-extension problem in question. Therefore, the addition on homotopy classes is defined through strict \Hopf space structures (namely, strictifications of weak \Hopf space structures), as described in Section~\ref{sec:strictness}. According to the uniqueness part of Theorem~\ref{t:existence_of_H_space_structures}, we may assume that the strict \Hopf space structure is constructed as in Section~\ref{sec:non_constr_H_space}. The corresponding non-additivity map $m'$ and its lift $M'$ are as in the following diagram
\[\xymatrix{
\Pold\vee_B\Pold \ar[r]^-0 \ar[d]_-\vartheta & \En \ar[d]^-\delta \\
\Pold\times_B\Pold \ar[r]_-{m'} \ar@{-->}[ru]^-{M'} & \Kn
}\]
and the addition is defined on $\Pnew$ inductively using
\[(x,y)+'(x',y')=(x+'x',y+_{\qno}y'+M'(x,x')).\]
An important property is that this makes $\jn$ into a homomorphism (since $M'$ vanishes when one of the arguments lies on the zero section $\oold$). Therefore, already on the level of representatives, $\pnst$ and $\jnst$ are homomorphisms.

Since $\knst$ preserves zeros, it is enough to show that $\knst$ is an affine homomorphism. Applying \eqref{eq:affine_homomorphism} to $\knst$, we need a homotopy
\[\kn x+_{\kno}\kn y\sim\kn(x+'y)\]
relative to $\Pold \vee_B \Pold$ or, in other words, a relative homotopy $0 \sim m'$. Such a homotopy is obtained as an image under $\delta$ of a relative homotopy $0 \sim M'$, which exists since $\En$ is (fibrewise) contractible.

It remains to treat the connecting homomorphism $\connn$. If $h_0,h_1\col \stdsimp{1}\times X\ra\Pold$ represent two elements of the domain, then the lift of $h_0\hplus h_1$ may be chosen to be the sum $\widetilde h_0\hplus\widetilde h_1$ of the two lifts. Thus, $(\widetilde h_0\hplus\widetilde h_1)_\mathrm{end}=(\widetilde h_0)_\mathrm{end}\hplus(\widetilde h_1)_\mathrm{end}$ and this corresponds to the sum of the $\connn$-images.

\subsection*{Computability of sections}
A section of $\pnst$ is defined by mapping a partial diagonal $\ell\col X\ra\Pold$ to an arbitrary lift $\widetilde\ell\col X\ra\Pnew$ of $\ell$, with a prescribed restriction to $A$. The computation of $\widetilde\ell$ is taken care of by Proposition~\ref{prop:lift_ext_one_stage}; a lift exists because $\ker\knst=\im\pnst$.

For the construction of a section $\sigma$ of $\jnst$, let $\ell\col X\ra\Pnew$ be a diagonal such that $\pn\ell$ is homotopic to $\oold$. Proposition~\ref{prop:compute_homotopy} computes a homotopy $\oold \sim \pn\ell$. Using Proposition~\ref{prop:homotopy_lifting}, we lift it along $\pn$ to a homotopy $\ell' \sim \ell$, relative to $A$, for some $\ell'$. Since $\pn\ell' = \oold = \pn\onew$, we have $\ell' = \onew + \zeta$ for a unique $\zeta \colon X \to \Ln$ and we set $\sigma(\ell)=\zeta$.
\end{proof}

\begin{lemma} \label{l:connecting_homomorphism}
Continuing the notation from the proof of Theorem~\ref{thm:exact_sequence_long}, the homotopy class $[\zeta]$ does not depend on the choices made; thus, $\connn$ is a well defined map. In addition, if $\zeta'$ is any other representative of this homotopy class, i.e.\ $\connn[h] = [\zeta']$, there exists a lift $\widetilde h'$ of $h$ that is a homotopy between $\onew$ and $\onew + \zeta'$ relative to $A$.
\end{lemma}

\begin{proof}
If $h$ is homotopic to $h'$, by a homotopy relative to $(\partial \stdsimp 1 \times X) \cup (\stdsimp 1 \times A)$, and $\widetilde h'$ is any lift of $h'$, then we may lift the homotopy $h \sim h'$ to a homotopy $\widetilde h \sim \widetilde h'$ relative to $(0 \times X) \cup (\stdsimp 1 \times A)$,\footnote{%
	This is a solution of a lifting extension problem whose left part is an inclusion in the pair $(\stdsimp 1, \partial \stdsimp 1) \times (\stdsimp 1, 0) \times (X, A)$ with the middle term $\infty$-connected, thus also the whole product, and the inclusion is a weak homotopy equivalence.
} that restricts to $1 \times X$ to a fibrewise homotopy $\onew + \zeta \sim \onew + \zeta'$, relative to $A$, implying $\zeta \sim \zeta'$; thus, $\connn$ is well defined.

For the second part, concatenating the homotopy $\widetilde h \colon \onew \sim \onew + \zeta$, with the homotopy $\onew + \zeta \sim \onew + \zeta'$ induced from the given $\zeta \sim \zeta'$, we obtain $\widetilde h' \colon \onew \sim \onew + \zeta'$. If the concatenation of homotopies is computed, as in Proposition~\ref{prop:homotopy_concatenation}, using the lift in
\[\xymatrix{
(\horn{2}{1} \times X) \cup (\Delta^2 \times A) \ar[rr] \ar@{ >->}[d] & & \Pnew \ar@{->>}[d]^-\pn \\
\stdsimp 2 \times X \ar[r]_-{s^1\times\id} \ar@{-->}[rru] & \stdsimp 1 \times X \ar[r]_-{h} & \Pold
}\]
then this concatenation will also be a lift of $h$, since the restriction of $h(s^1 \times \id)$ to $d_1 \stdsimp 2 \times X$ equals $h$; here $s^1 \colon \Delta^2 \to \Delta^1$ is the map sending the non-degenerate 2-simplex of $\Delta^2$ to the $s_1$-degeneracy of the non-degenerate 1-simplex of $\Delta^1$.
\end{proof}

\begin{proposition}\label{prop:compute_homotopy}
Suppose that $(X, A)$ is equipped with effective homology and that $[\stdsimp 1 \times X, \Polder]^{(\partial\stdsimp 1 \times X) \cup (\stdsimp 1 \times A)}_B$ is a fully effective abelian group for all $\thedimm < \thedim - 1$. Then there is an algorithm that decides whether given $[\oold], [\ell_{\thedim-1}] \in [X, \Pold]^A_B$ are equal. If this is the case, the algorithm computes a homotopy $\oold \sim \ell_{\thedim-1}$.
\end{proposition}

We remark that the above homotopy decision algorithm admits a generalization to non-stable stages and, thus, provides homotopy testing for maps to an arbitrary simply connected space, see~\cite{Filakovsky:suspension}.

\begin{proof}
We compute the homotopy $h_{\thedim-1}$ by induction on the height $\thedimm$ of the Moore--Postnikov stage $\Polder$. Let $o_\thedimm$ and $\ell_\thedimm$ denote the projections of $\oold$ and $\ell_{\thedim-1}$ onto the $\thedimm$-th stage $P_\thedimm$. Suppose that we have computed a homotopy $h_{\thedimm-1} \colon o_{\thedimm-1} \sim \ell_{\thedimm-1}$ and lift it by Proposition~\ref{prop:homotopy_lifting} to a homotopy $\widetilde h_{\thedimm-1}\col\ell'_\thedimm\sim\ell_\thedimm$ from some map $\ell'_\thedimm$, necessarily of the form $\ell'_\thedimm = o_\thedimm + \zeta'_\thedimm$.

Since Proposition~\ref{prop:homotopy_concatenation} provides algorithmic means for concatenating homotopies, it remains to construct a homotopy $h'_\thedimm \colon o_\thedimm \sim \ell'_\thedimm$. Consider the connecting homomorphism in \eqref{eq:les} for stages $P_{\thedimm-1}$ and $P_\thedimm$, i.e.
\[\conni\col[\stdsimp{1}\times X,P_{\thedimm-1}]^{(\partial\stdsimp{1}\times X)\cup(\stdsimp{1}\times A)}_B\lra[X,\Li]_B^A.\]
From the already proved exactness of \eqref{eq:les} and from $\ell'_\thedimm\sim\ell_\thedimm$, it follows that $[\zeta'_\thedimm]$ lies in the image of $\conni$ if and only if $o_\thedimm \sim \ell_\thedimm $. If this is the case, we obtain a representative $h'_{\thedimm-1}$ of a preimage by Lemma~\ref{l:preimage}. Thus, $\conni[h'_{\thedimm-1}]=[\zeta'_\thedimm]$.

According to Lemma~\ref{l:connecting_homomorphism}, there exists a lift of $h'_{\thedimm-1}$ that is a homotopy $h'_{\thedimm} \colon o_{\thedimm} \sim o_{\thedimm} + \zeta'_\thedimm$ relative to $A$.  This specifies the top map in the following lifting-extension problem
\[\xymatrix{
(\partial\stdsimp{1}\times X)\cup(\stdsimp{1}\times A) \ar[r] \ar[d] & P_\thedimm \ar[d] \\
\stdsimp{1}\times X \ar[r]_-{h'_{\thedimm-1}} \ar@{-->}[ru]_-{h'_\thedimm} & P_{\thedimm-1}
}\]
and $h'_{\thedimm}$ can thus be computed using Proposition~\ref{prop:lift_ext_one_stage}.
\end{proof}

\section{Leftover proofs} \label{sec:leftover_proofs}

The purpose of this section is to prove statements that were used in the main part but whose proofs would disturb the flow of the paper.

\addtocounter{equation}{-1}
{\renewcommand{\theequation}{\ref{t:MP_tower} (restatement)}
\begin{theorem}
\MPt
\end{theorem}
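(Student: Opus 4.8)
The plan is to carry out the classical stage-by-stage construction of a Moore--Postnikov tower, but entirely inside equivariant effective homology. Set $P_0=B$, $\varphi_0=\varphi$, $p_0=\id$; $B$ has effective homology by hypothesis. Inductively, suppose $\Pold$ has been built as a locally effective free $G$-simplicial set over $B$ with effective homology, together with a computable map $\varphi_{\thedim-1}\col Y\to\Pold$ over $B$ satisfying MP1 and MP2. These axioms imply that the homotopy fibre $\widetilde F_{\thedim-1}$ of $\varphi_{\thedim-1}$ is $(\thedim-1)$-connected, equivalently that the pair $(\Pold,Y)$ --- meaning the mapping cylinder of $\varphi_{\thedim-1}$, which contains $Y$ as an honest simplicial subset --- is $\thedim$-connected.

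First I would determine the coefficient $\ZG$-module. I put $\pin\defeq H_{\thedim+1}$ of the algebraic mapping cone of the computable chain map $C_*\varphi_{\thedim-1}\col C_*Y\to C_*\Pold$ (a model for $C_*(\Pold,Y)$); this cone again carries effective homology, built functorially from the effective homologies of $C_*Y$ and $C_*\Pold$. Since $Y$ is simply connected and the pair is $\thedim$-connected, the relative Hurewicz theorem gives $H_\thedimm(\Pold,Y)=0$ for $\thedimm\le\thedim$ and $H_{\thedim+1}(\Pold,Y)\cong\pi_{\thedim+1}(\Pold,Y)\cong\pi_\thedim(\widetilde F_{\thedim-1})$, so $\pin$ is the correct module (one checks afterwards $\pin\cong\pi_\thedim F$), and it is computable as a finitely generated abelian group by Smith normal form on the effective model, with the $G$-action read off the basis. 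Next, using $H_\thedim(\Pold,Y)=0$, universal coefficients gives $H^{\thedim+1}_G(\Pold,Y;\pin)\xra{\cong}\operatorname{Hom}_{\ZG}(\pin,\pin)$, and I take the Postnikov invariant $\knp\col\Pold\to K(\pin,\thedim+1)$ to be a computable cocycle representing the preimage of $\id_{\pin}$. Turning this cohomology class into an honest computable cocycle on the merely locally effective $\Pold$ --- hence, via $\map(\Pold,K(\pin,\thedim+1))\cong Z^{\thedim+1}(\Pold;\pin)^G$, into an honest simplicial map --- is the same effective-homology bookkeeping used in the proof of Lemma~\ref{l:lift_ext_one_stage}: compute the cocycle on the effective model, transport it through the strong equivalence. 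By construction $[\knp\varphi_{\thedim-1}]=0$ in $H^{\thedim+1}_G(Y;\pin)$.

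Then I would set $\Pnew\defeq\Pold\times_{K(\pin,\thedim+1)}E(\pin,\thedim)$, the pullback of $\knp$ along the Kan fibration $\delta$; this is MP3, with $\pn\col\Pnew\to\Pold$ a Kan fibration and $\Pnew$ the principal twisted cartesian product $\Pold\times_\tau K(\pin,\thedim)$ (with twisting function the cocycle $\knp$). It is a locally effective free $G$-space over $B$, and it is equipped with effective homology exactly as sketched in the introduction: non-equivariant effective homology of $K(\pin,\thedim)$ by \cite[Theorem~3.16]{polypost} and of the twisted product by \cite[Corollary~12]{Filakovsky}, then promoted to equivariant effective homology by Theorem~\ref{t:vokrinek}. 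Since $\knp\varphi_{\thedim-1}$ is a coboundary, I compute a cochain $u\in C^\thedim(Y;\pin)^G$ with $\delta u=\knp\varphi_{\thedim-1}$ (preimages under $\delta$ are computable via the effective homology of $C_*Y$, again as in the proof of Lemma~\ref{l:lift_ext_one_stage}) and set $\varphin\defeq(\varphi_{\thedim-1},u)\col Y\to\Pnew$, a computable map over $B$ with $\pn\varphin=\varphi_{\thedim-1}$. Finally I would verify MP1 and MP2 for $\Pnew$ by the standard diagram chase with the long exact sequences of the fibration $K(\pin,\thedim)\to\Pnew\to\Pold$ and of the homotopy fibres of $\varphin$ and $\varphi_{\thedim-1}$: because $\knp$ is the universal class, the induced map $\widetilde F_{\thedim-1}\to K(\pin,\thedim)$ is an isomorphism on $\pi_\thedim$, whence $\hofibre(\varphin)$ is $\thedim$-connected, which is MP1, and MP2 for $\Pnew$ then follows from MP2 for $\Pold$.

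I expect the two steps of the second paragraph to be the crux. Everything hinges on giving the relative object $C_*(\Pold,Y)$ effective homology, so that $\pin$ and the universal class are actually computable, and on converting a cohomology class on the non-effective space $\Pold$ into an explicit computable cocycle and simplicial map --- all of this done $\ZG$-equivariantly, which is precisely where Theorem~\ref{t:vokrinek} and the functoriality of the effective-homology constructions are indispensable. Forming the pullback, defining $\varphin$, and verifying the axioms are then routine.
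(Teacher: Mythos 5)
Your proposal is correct and takes essentially the same route as the paper: $\pin$ as $H_{\thedim+1}$ of the mapping cone of $\varphi_{(\thedim-1)*}$ with effective homology assembled from the strong equivalences for $C_*Y$ and $C_*\Pold$, a computable cocycle inducing the identity on that homology as the invariant $\knp$ (the paper writes this cocycle down directly via the retraction onto cycles of Proposition~\ref{prop:projectivity}, which is exactly what makes your ``preimage of $\id_{\pin}$ under universal coefficients'' computable), the pullback of $\delta\col E(\pin,\thedim)\to K(\pin,\thedim+1)$, effective homology of $\Pnew$ via the twisted cartesian product and Theorem~\ref{t:vokrinek}, and the same homotopy-theoretic verification of MP1--MP2. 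The only cosmetic difference is that the paper obtains the lift $l_\thedim\col Y\to E(\pin,\thedim)$ simultaneously as the second component of the cocycle on the cone, whereas you recompute a primitive $u$ of $\knp\varphi_{\thedim-1}$ separately from the effective homology of $Y$.
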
}\label{sec:MP_tower_proof}

The proof will be presented in two parts. First, we describe the construction of the objects and then we prove that they really constitute an extended Moore--Postnikov tower.

The construction itself follows ideas by E.~H.~Brown for non-equivariant simplicial sets in \cite{Brown} and by C.~A.~Robinson for topological spaces with free actions of a group in \cite{Robinson}.

We described the construction in the non-equivariant non-fibrewise case $G=1$ and $B=*$ in detail in \cite{polypost}. Here, we give a brief overview with the emphasis on the necessary changes for $G$ and $B$ non-trivial.

\subsection*{Construction}
The first step of the construction is  easy. Put $P_0=B$ and $\varphi_0=\psi$. To proceed by induction, suppose that we have constructed $\Pold$ and a map $\varphi_{\thedim-1}\col Y\to \Pold$ with properties~\ref{MP1} and~\ref{MP2} from the definition of the Moore--Postnikov tower. Moreover, assume that $\Pold$ is equipped
with effective homology.

Viewing $\cone\varphi_{(\thedim-1)*}$ as a perturbation of $C_*\Pold\oplus C_*Y$, we obtain from strong equivalences $C_*\Pold\LRa C_*^\ef\Pold$ and $C_*Y\LRa C_*^\ef Y$ a strong equivalence $\cone\varphi_{(\thedim-1)*}\LRa C_*^\ef$ with $C_*^\ef$ effective (for details, see \cite[Proposition~3.8]{polypost}). Let us consider the composition
\[C_{\thedim+1}^\ef\ra Z_{\thedim+1}(C_*^\ef)\ra H_{\thedim+1}(C_*^\ef)\defeq\pin,\]
where the first map is an (equivariant) retraction of $Z_{\thedim+1}(C_*^\ef)\subseteq C_{\thedim+1}^\ef$, computed by the algorithm of Proposition~\ref{prop:projectivity}; the second map is simply the projection onto the homology group. The homology group itself is computed from $C_*^\ef$ -- by forgetting the action of $G$, it is a chain complex of finitely generated abelian groups and Smith normal form is available. The $G$-action on $\pin$ is easily computed from the $G$-action on $C_*^\ef$. Composing with the chain map $\cone\varphi_{(\thedim-1)*}\ra C_*^\ef$ coming from the strong equivalence, we obtain
\[\kappa+\lambda\col C_{\thedim+1}\Pold\oplus C_\thedim Y=\big(\cone\varphi_{(\thedim-1)*}\big)_{\thedim+1}\ra C_{\thedim+1}^\ef\ra\pin\]
whose components are denoted $\kappa$ and $\lambda$. They correspond, respectively, to maps
\[\knp\col \Pold\ra K(\pin,\thedim+1),\ l_\thedim'\col Y\ra E(\pin,\thedim)\]
that fit into a square
\begin{equation}\label{eq:postnikov_square}
\xymatrixc{
Y \ar[r]^-{l_\thedim'} \ar[d]_-{\varphi_{\thedim-1}} & E(\pin,\thedim) \ar[d]^-\delta \\
\Pold \ar[r]_-{\knp} & K(\pin,\thedim+1)
}\end{equation}
which commutes by the argument of \cite[Section~4.3]{polypost}.

Now we can take $\Pnew=\Pold\times_{K(\pin,\thedim+1)}E(\pin,\thedim)$ to be the pullback as in part~\ref{MP3} of the definition of the tower. By the commutativity of the square \eqref{eq:postnikov_square}, we obtain a map $\varphin=(\varphi_{\thedim-1},l_\thedim')\col Y\ra \Pnew$ as in
\[\xymatrix@=10pt{
Y \ar@/^10pt/[drrr]^{l_\thedim'} \ar@/_10pt/[dddr]_{\varphi_{\thedim-1}} \ar@{-->}[dr]^-{\varphin} \\
& \Pnew \ar[rr] \ar[dd]^{\pn} & & E(\pin,\thedim) \ar[dd]^{\delta} \\
\\
& \Pold \ar[rr]_-{\knp} & & K(\pin,\thedim+1)
}\]
which we will prove to satisfiy the remaining conditions for the $\thedim$-th stage of a Moore--Postnikov tower.

First, however, we equip $\Pnew$ with effective homology. To this end, observe that $\Pnew$ is isomorphic to the twisted cartesian product $\Pold\times_{\tau}K(\pin,\thedim)$, see \cite[Proposition~18.7]{May:SimplicialObjects-1992}. Since $\Pold$ is equipped with effective homology by induction, and $K(\pin,\thedim)$ admits effective homology non-equivariantly by \cite[Theorem~3.16]{polypost}, it follows from \cite[Corollary~12]{Filakovsky} (or \cite[Proposition~3.10]{polypost}) that $\Pnew$ can also be equipped with effective homology non-equivariantly. Since the $G$-action on $\Pnew$ is clearly free (any fixed point would get mapped by $\psin$ to a fixed point in $B$), Theorem~\ref{t:vokrinek} provides (equivariant) effective homology for $\Pnew$ (distinguished simplices of $\Pnew$ are pairs with the component in $\Pold$ distinguished).

\subsection*{Correctness}
From the exact sequence of homotopy groups associated with the fibration sequence
\[\Pnew\ra \Pold\ra K(\pin,\thedim+1)\]
and the properties~\ref{MP1} and~\ref{MP2} for $\Pold$, we easily get that $\Pnew$ satisfies the condition~\ref{MP2} and that $\varphinst\col \pi_\thedimm(Y)\to\pi_\thedimm(\Pnew)$ is an isomorphism for $0\le\thedimm\le\thedim-1$.

The rest of the proof is derived, as in \cite[Section~4.3]{polypost}, from the morphism of long exact sequences of homotopy groups
\[\xymatrix@R=15pt@C=10pt{
& \pi_{\thedim+1}(Y)\ar[r] \ar[d]^{\varphinst}& \pi_{\thedim+1}(\cyl\varphi_{\thedim-1})\ar[r] \ar[d]^{\cong} & \pi_{\thedim+1}(\cyl\varphi_{\thedim-1},Y) \ar[d]^{\cong} \ar[r] & \pin(Y) \ar[d]^{\varphinst} \ar[r] & \pin(\cyl\varphi_{\thedim-1}) \ar[d]^{\cong} \ar[r] & 0 \\
0 \ar[r] & \pi_{\thedim+1}(\Pnew)\ar[r] & \pi_{\thedim+1}(\cyl \pn) \ar[r] & \pi_{\thedim+1}(\cyl \pn,\Pnew) \ar[r] & \pin(\Pnew) \ar[r] & \pin(\cyl \pn)
}\]
associated with pairs $(\cyl\varphi_{\thedim-1},Y)$ and $(\cyl \pn,\Pnew)$. The arrow in the middle is an isomorphism by \cite[Lemma~4.5]{polypost}, while the remaining two isomorphisms are consequences of the fact that both cylinders deform onto the same base $\Pold$. The zero on the left follows from the fact that the fibre of $\pn$ is $K(\pin,\thedim)$ and the zero on the right comes from the condition~\ref{MP1} for $\Pold$. By the five lemma, $\varphinst$ is an isomorphism on $\pin$ and an epimorphism on $\pi_{\thedim+1}$ which completes the proof of condition~\ref{MP1}.

\subsection*{Addendum}
For a given computable $\beta \colon \tB \to B$, the pullbacks $\tPnew = \tB \times_B \Pnew$ may be identified with twisted cartesian products $\tPold \times_\tau K(\pin, \thedim)$ and as such admit effective homology by induction, starting from the assumed effective homology of $\widetilde P_0 = \tB$.
\qed
\vskip\topsep

For the next proof, we will use the following observation.

\begin{lemma}\label{l:fibrant_replacement}
Every map $\psi\col P\to Q$ can be factored as $\psi\col P\cof[j]P'\fib[\psi']Q$, where $j$ is a weak homotopy equivalence and $\psi'$ is a Kan fibration.
\end{lemma}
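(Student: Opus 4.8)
The plan is to obtain this factorization by the usual small object argument, run in the category of $G$-simplicial sets. First I would introduce the set of equivariant horn inclusions $\mcJ=\{\,G\times\horn{n}{k}\hookrightarrow G\times\stdsimp{n}\mid n\geq 1,\ 0\leq k\leq n\,\}$, where $G$ acts on the factor $G$ by left translation and trivially on the simplicial factor. Since $\map_G(G\times K,L)\cong\map(K,L)$ naturally, an equivariant map $\chi\col Q'\to P$ has the equivariant right lifting property against $\mcJ$ if and only if its underlying simplicial map has the right lifting property against all ordinary horn inclusions, i.e.\ if and only if $\chi$ is a Kan fibration.

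Next I would apply the (countable) small object argument to $\psi$ relative to $\mcJ$, producing a factorization $\psi=\psi'j$ in which $j\col Q\to Q'$ is a transfinite composite of pushouts of coproducts of maps from $\mcJ$, while $\psi'\col Q'\to P$ has the equivariant right lifting property against $\mcJ$; by the previous paragraph $\psi'$ is a Kan fibration. Then I would check the three properties of $j$. It is a monomorphism, hence an equivariant inclusion, because monomorphisms of simplicial sets are closed under pushout, coproduct and transfinite composition. Its underlying map of non-equivariant simplicial sets is anodyne: each member of $\mcJ$ underlies a (finite) coproduct of horn inclusions, and the anodyne maps are closed under the same three operations; an anodyne map is a trivial cofibration and in particular a weak homotopy equivalence, so $j$ is an equivariant weak homotopy equivalence.

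The only point that genuinely uses the equivariant framework, and the one I would spell out most carefully, is that $Q'$ again carries a free $G$-action when $Q$ does. Each cell $G\times\stdsimp{n}$ that gets attached carries a free action, and since the attaching maps $G\times\horn{n}{k}\hookrightarrow G\times\stdsimp{n}$ are monomorphisms, the set of simplices of $Q'$ splits $G$-equivariantly into those of $Q$ together with the $G$-translates of the images of the open cells $G\times(\stdsimp{n}\smallsetminus\horn{n}{k})$; the action is free on each piece, and this passes to the transfinite colimit. I do not expect any real obstacle here: the statement is simply the standard trivial-cofibration/fibration factorization of the Kan--Quillen model structure, transported to free $G$-simplicial sets.
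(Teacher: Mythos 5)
Your proposal follows the paper's route almost exactly: the small object argument applied to the set $\mcJ$ of $G$-free horn inclusions $G\times\horn{n}{k}\to G\times\stdsimp{n}$, with the identification of the $\mcJ$-injectives as the (non-equivariant) Kan fibrations via the adjunction $\map_G(G\times K,L)\cong\map(K,L)$; this is the same argument the paper extracts from the equivalence of lifting problems in the proof of Theorem~\ref{t:n_equivalence}. Your extra remark about freeness of the $G$-action on $Q'$ is correct and harmless (the paper leaves it implicit, as all its simplicial sets are assumed $G$-free).

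The one place where you stop short is the weak-equivalence claim for $j$. The paper defines a weak homotopy equivalence, for the purposes of this lemma, as a map whose \emph{geometric realization is a $G$-homotopy equivalence}, which is stronger than what you verify: you show that $j$ is non-equivariantly anodyne, hence a non-equivariant weak homotopy equivalence, and then assert the equivariant statement without argument. To close this you need either the equivariant Whitehead theorem (a $G$-map between free $G$-CW complexes that is a non-equivariant weak equivalence is a $G$-homotopy equivalence), or, as the paper does more directly, the observation that $|G\times\stdsimp{n}|$ deformation retracts $G$-equivariantly onto $|G\times\horn{n}{k}|$, so the realization of every relative $\mcJ$-cell complex is a $G$-homotopy equivalence. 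Either patch is a one-line fix, but as written the final implication is unjustified relative to the notion of weak equivalence the lemma actually requires.
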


By a \emph{weak homotopy equivalence}, we will understand a map whose geometric realization is a $G$-homotopy equivalence.

\begin{proof}
This is the small object argument (see e.g.\ \cite[Section~10.5]{Hirschhorn} or~\cite[Section~7.12]{DwyerSpalinski}) applied to the collection $\mcJ$ of ``$G$-free horn inclusions'' $G\times\horn\thedim\thedimm\to G\times\stdsimp\thedim$, $\thedim\geq1$, $0\leq\thedimm\leq\thedim$. Using the terminology of \cite{Hirschhorn}, the $\mcJ$-injectives are exactly those maps that have non-equivariantly the right lifting property with respect to $\horn\thedim\thedimm\to \stdsimp\thedim$ (this follows from the equivalence \eqref{eq:horn-injectivity} from the next proof), i.e.\ Kan fibrations. The geometric realization of every relative $\mcJ$-cell complex is a $G$-homotopy equivalence since the geometric realization of $G\times\stdsimp\thedim$ clearly deforms onto that of $G\times\horn\thedim\thedimm$.
\end{proof}

\addtocounter{equation}{-1}
{\renewcommand{\theequation}{\ref{t:n_equivalence} (restatement)}
\begin{theorem}
\nequiv
\end{theorem}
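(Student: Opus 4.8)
The plan is to replace $\varphinp$ by an honest Kan fibration over $B$ with a highly connected fibre, and then to compute $[X,-]^A_B$ through equivariant obstruction theory.

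First I would produce the map $\varphinp\col Y'\to\Pnew$. Since $\psin\varphin=\varphi=\varphi'j$, the square with the trivial cofibration $j\col Y\cof Y'$ on the left and the fibration $\psin\col\Pnew\fib B$ on the right commutes, so it admits a fibrewise diagonal $\varphinp$ with $\varphinp j=\varphin$. Because $j$ is a weak equivalence, property~\ref{MP1} shows that $\varphinp$ is an isomorphism on $\pi_\thedimm$ for $\thedimm\le\thedim$ and an epimorphism on $\pi_{\thedim+1}$; equivalently, the homotopy fibre $F$ of $\varphinp$ is $\thedim$-connected. Using Lemma~\ref{l:fibrant_replacement} I would then factor $\varphinp$ over $B$ as $Y'\cof[j']Y''\fib[\pi]\Pnew$ with $j'$ a weak equivalence and $\pi$ a Kan fibration; then $Y''\to B$ is again a Kan fibration, $j'$ is a trivial cofibration over $B$, and the fibre of $\pi$ is $F$. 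Lifting $\id_{Y'}$ against $j'$ (possible since $\varphi'\col Y'\fib B$ is a fibration) gives a fibrewise retraction $r$ with $rj'=\id$, and lifting along the trivial cofibration $(\stdsimp1\times Y')\cup(\partial\stdsimp1\times Y'')\cof\stdsimp1\times Y''$ against $Y''\fib B$ gives a fibrewise homotopy $j'r\simeq\id_{Y''}$ relative to $Y'$; since the prescribed map on $A$ factors through $Y'$, the maps $j'_*$ and $r_*$ between $[X,Y']^A_B$ and $[X,Y'']^A_B$ are mutually inverse bijections for every pair $(X,A)$ over $B$. As $\varphinpst=\pi_*j'_*$, it remains to show that $\pi_*\col[X,Y'']^A_B\to[X,\Pnew]^A_B$ is a bijection whenever $X$ is $\thedim$-dimensional with a free $G$-action.

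For this last reduction I would invoke equivariant obstruction theory. Because the actions on $X$, $A$, $Y''$, $\Pnew$ are free, a diagonal of a lifting--extension problem for $\pi$ is the same thing as a section, over $X/G$ relative to $A/G$, of the bundle obtained by pulling $Y''/G\to\Pnew/G$ back along $X/G\to\Pnew/G$, and the obstructions to constructing, respectively comparing, such sections lie in the groups $H^{\thedimm+1}_G(X,A;\pi_\thedimm F)$, respectively $H^{\thedimm}_G(X,A;\pi_\thedimm F)$, i.e.\ in the cohomology of $X/G$ relative to $A/G$ with coefficients in the (possibly twisted) system determined by $\pi_\thedimm F$. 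For surjectivity one lifts a given diagonal $X\to\Pnew$ through $\pi$ relative to $A$; the obstructions vanish because $\pi_\thedimm F=0$ for $\thedimm\le\thedim$, and since $\dim X\le\thedim$ only such $\thedimm$ occur. For injectivity one lifts a vertical homotopy $\stdsimp1\times X\to\Pnew$ rel $A$ between $\pi$ of two lifts to a vertical homotopy in $Y''$; here the relevant pair $\bigl(\stdsimp1\times X,(\partial\stdsimp1\times X)\cup(\stdsimp1\times A)\bigr)$ has relative cells of dimension $\le\thedim+1$, so again only coefficient groups $\pi_\thedimm F$ with $\thedimm\le\thedim$ enter and all obstructions vanish. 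Hence $\pi_*$, and therefore $\varphinpst$, is a bijection.

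The step I expect to be the main obstacle is the last one: making equivariant obstruction theory precise in this fibrewise, relative setting. The fibres of $\pi$ carry no canonical $G$-fixed basepoint, which forces the passage to the quotients $X/G$, $\Pnew/G$ and the appearance of a twisted coefficient system on $X/G$; and one must track the dimensions of the relative cells carefully so that every obstruction group that could be nonzero has connectivity-killed coefficients. Everything else --- the existence of $\varphinp$, the fibrant replacement, and the fact that a fibrewise weak equivalence between objects fibrant over $B$ induces bijections on relative homotopy classes over $B$ --- is routine.
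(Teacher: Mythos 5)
Your proposal is correct, and its skeleton matches the paper's: construct $\varphinp$ by lifting against a fibration using the $\mcJ$-cell structure of $Y\to Y'$, observe via property~(1) that $\varphinp$ is an $(\thedim+1)$-equivalence, replace it by a Kan fibration, and then win by playing the $\thedim$-connectivity of the fibre against $\dim X\le\thedim$. The execution of the final step, however, differs genuinely. You pass to the quotients $X/G$, $\Pnew/G$ and invoke classical obstruction theory with twisted coefficients, both for extending sections (surjectivity) and for lifting vertical homotopies over the pair $\bigl(\stdsimp1\times X,(\partial\stdsimp1\times X)\cup(\stdsimp1\times A)\bigr)$ (injectivity); the dimension counts you give are the right ones. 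The paper never sets up obstruction cocycles or local coefficients at all: it proves the general statement that any $(\thedim+1)$-equivalence $\psi\colon Q\to P$ over $B$ induces a bijection $[X,Q]^A_B\to[X,P]^A_B$, gets surjectivity by inducting over the free cells $G\times\stdsimp{\theotherdim}$ of $(X,A)$, where each attaching step reduces (by freeness) to a single non-equivariant lifting of $\partial\stdsimp{\theotherdim}\to\stdsimp{\theotherdim}$ against $\psi$, and then gets injectivity for free from surjectivity: the preimages of $[\ell]$ form $[X,Q]^A_P\cong[X,\ell^*Q]^A_X$, and the pulled-back fibration $\ell^*Q\to X$ has a section which is an $\thedim$-equivalence, so this set is $[X,X]^A_X=*$. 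Thus the "main obstacle" you flag — making equivariant, fibrewise, twisted-coefficient obstruction theory precise — is real work on your route but standard, and it is exactly what the paper's free-cell induction and pullback trick are designed to sidestep; conversely, your explicit retraction/deformation argument for the trivial cofibration $j'$ spells out what the paper compresses into "replace $\psi$ by a $G$-homotopy equivalent Kan fibration". One small point to keep in mind if you carry out your version: you should justify that $Y''/G\to\Pnew/G$ is again a Kan fibration (true for free actions) and that the pushout-product of the trivial cofibration $j'$ with $\partial\stdsimp1\subset\stdsimp1$ still has the left lifting property against equivariant Kan fibrations; both are standard for free $G$-simplicial sets, and the twisting of coefficients is harmless since the relevant groups $\pi_\thedimm F$ vanish outright.
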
}\label{sec:n_equivalence_proof}

\begin{proof}
By construction, $\varphin\col Y\to \Pnew$ is an $(\thedim+1)$-equivalence. By the proof of Lemma~\ref{l:fibrant_replacement}, we may assume $Y\to Y'$ to be a relative $\mcJ$-cell complex. We show that $\varphin$ factors through $Y'$. Given that this is true for $\Pold$, we form the square
\[\xymatrix@C=40pt{
Y \ar[r]^-{\varphin} \ar@{ >->}[d]_-\sim & \Pnew \ar@{->>}[d]^\pn \\
Y' \ar[r]_-{\varphi_{\thedim-1}'} \ar@{-->}[ur]^-{\varphinp} & \Pold
}\]
in which a diagonal exists by the fact that $Y\to Y'$ is a relative $\mcJ$-cell complex and such maps have the left lifting property with respect to Kan fibrations.

The map $\varphinp$ is also an $(\thedim+1)$-equivalence. We will prove more generally that
\[\psi_*\col[X,P]^A_B\to[X,Q]^A_B\]
is an isomorphism for any $(\thedim+1)$-equivalence $\psi\col P\to Q$.

The basic idea is that $X$ is built from $A$ by consecutively attaching ``cells with a free action of $G$'', namely $X=\cup X_\thedimm$ and in each step $X_\thedimm=X_{\thedimm-1}\cup_{G\times\partial\stdsimp{\theotherdim_\thedimm}}G\times\stdsimp{\theotherdim_\thedimm}$ with $\theotherdim_\thedimm\leq\thedim$.\footnote{Thus, the action needs only be free away from $A$ and the same generalization applies to the dimension.}

First, we prove that $\psi_*$ is surjective under the assumption that $\psi$ is an $\thedim$-equivalence. For convenience, we replace $\psi$ by a $G$-homotopy equivalent Kan fibration using Lemma~\ref{l:fibrant_replacement}. Suppose that the above map $\psi_*$, but with $X$ replaced by $X_{\thedimm-1}$, is surjective and we prove the same for $X_\thedimm$. This is clearly implied by the solvability of the following lifting-extension problem
\[\xymatrix@C=40pt{
X_{\thedimm-1} \ar[r] \ar@{ >->}[d] & P \ar@{->>}[d]^-\psi \\
X_\thedimm \ar[r]_-{\ell} \ar@{-->}[ru] & Q
}\]
(to find a preimage of $[\ell]$ at the bottom, we find the top map by the inductive hypothesis; if the lift exists, it gives a preimage of $[\ell]$ as required). As $X_\thedimm$ is obtained from $X_{\thedimm-1}$ by attaching a single cell, the problem is equivalent to
\begin{equation}\label{eq:horn-injectivity}
\xy *!C\xybox{\xymatrix@C=30pt{
G\times\partial\stdsimp{\theotherdim_\thedimm} \ar[r] \ar@{ >->}[d] & P \ar@{->>}[d]^-\psi \ar@{}[drrr]|-{\parbox{\widthof{that is further}}{that is further equivalent to}} & & & \partial\stdsimp{\theotherdim_\thedimm} \ar[r] \ar@{ >->}[d] & P \ar@{->>}[d]^-\psi \\
G\times\stdsimp{\theotherdim_\thedimm} \ar[r] \ar@{-->}[ru] & Q & & & \stdsimp{\theotherdim_\thedimm} \ar[r] \ar@{-->}[ru] & Q
}}\endxy
\end{equation}
where the problem on the right is obtained from the left by restricting to $e\times\stdsimp{\theotherdim_\thedimm}$ and is non-equivariant. Its solution is guaranteed by $\psi$ being an $\theotherdim_\thedimm$-equivalence.

To prove the injectivity of $\psi_*$, we put back the assumption of $\psi$ being an $(\thedim+1)$-equivalence. We study the preimages of $[\ell]\in[X,Q]^A_B$ under $\psi_*$; these clearly form $[X,P]^A_Q$. By the surjectivity part, this set is non-empty. By pulling back $P$ along $\ell$, we thus obtain a fibration $\ell^*P\to X$ with a section $X\to\ell^*P$ which is an $\thedim$-equivalence.\footnote{%
	The fibres of $\psi$ are $\thedim$-connected and isomorphic to those of $\ell^*P\to X$. From the long exact sequence of homotopy groups of this fibration, it follows that $\ell^*P\to X$ is also an $(\thedim+1)$-equivalence and its section then must be an $\thedim$-equivalence.
} Thus,
\[[X,P]^A_Q\cong[X,\ell^*P]^A_X\xlla\cong[X,X]^A_X=*\]
by the surjectivity part (any surjection from a one-element set is a bijection).
\end{proof}

Next, we need the following lemma.

\begin{lemma}\label{l:whe_weak_strict}
The natural maps $P\hvee_BP\ra P\vee_BP$ and $P\htimes_BP\ra P\times_BP$ are weak homotopy equivalences.

The inclusion $(P\htimes_BP)\cup\big(\bdry_2\stdsimp{2}\times(P\vee_BP)\big)\cof\stdsimp2\times(P\times_BP)$ is a weak homotopy equivalence.
\end{lemma}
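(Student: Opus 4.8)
The lemma packages three separate assertions, and the plan is to settle each of them inside the Kan--Quillen model category of simplicial sets, where all objects are cofibrant, the cofibrations are the monomorphisms, the trivial cofibrations coincide with the anodyne extensions, the structure is left proper, and the pushout--product of a trivial cofibration with a cofibration is a trivial cofibration (see \cite{Hirschhorn}, \cite{May:SimplicialObjects-1992}).

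For the middle map $P\htimes_BP\ra P\times_BP$ I would write down an explicit deformation retraction. Recall that $P\htimes_BP=|\mcS_P|$ lives inside $\stdsimp2\times(P\times_BP)$, that the map in question is the second projection $\pr$, and that $w\mapsto(2,w)$ is a section $\iota$ of it, namely the inclusion of the summand $2\times(P\times_BP)$, with $\pr\iota=\id$ on the nose. To produce a homotopy $\iota\pr\sim\id$ I would restrict $h\times\id$ to $|\mcS_P|$, where $h\col\stdsimp1\times\stdsimp2\ra\stdsimp2$ is induced by the monotone map $[1]\times[2]\ra[2]$, $(i,j)\mapsto\max(j,2i)$; thus $h(0,-)=\id$ and $h(1,-)$ is constant at the last vertex. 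The only point to check --- and it is immediate from the formula --- is that $h$ carries $\stdsimp1\times\bdry_1\stdsimp2$ into $\bdry_1\stdsimp2$, $\stdsimp1\times\bdry_0\stdsimp2$ into $\bdry_0\stdsimp2$, and $\stdsimp1\times 2$ into $2$, so that $h\times\id$ preserves the four summands of $|\mcS_P|$. This gives a (fibrewise, equivariant) homotopy, so $\pr$ is a deformation retraction and in particular a weak homotopy equivalence. (Nothing here is special to $\mcS_P$: the projection $|\mcS|\ra Z_2$ is a weak homotopy equivalence for any square $\mcS$.)

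For $P\hvee_BP\ra P\vee_BP$ I would use that $P\hvee_BP=\bdry_2|\mcS_P|$ is the double mapping cylinder of $P_\mathrm{right}\xla{o}B\xra{o}P_\mathrm{left}$ and that the map collapses the cylinder. Since the zero section $o\col B\cof P$ is a cofibration, the strict pushout $P\vee_BP=P_\mathrm{right}\cup_BP_\mathrm{left}$ is itself a homotopy pushout of this span, and the comparison of the two homotopy pushouts is a weak equivalence by the gluing lemma in a left proper model category. Concretely one writes $P\hvee_BP=P_\mathrm{right}\cup_{0\times B}\big(P_\mathrm{left}\cup_{1\times B}(\stdsimp1\times B)\big)$, applies the deformation retraction $P_\mathrm{left}\cup_{1\times B}(\stdsimp1\times B)\xra{\sim}P_\mathrm{left}$ (collapse of the cylinder onto its last-vertex end, which is a genuine simplicial homotopy), and pushes out along the cofibration $0\times B\cof P_\mathrm{left}\cup_{1\times B}(\stdsimp1\times B)$.

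The third assertion is the substantial one. First I would unwind the definitions, using $\bdry_1\stdsimp2\cup\bdry_2\stdsimp2=\horn20$ and $\bdry_0\stdsimp2\cup\bdry_2\stdsimp2=\horn21$, to rewrite the subcomplex in question as
\[E:=(P\htimes_BP)\cup\big(\bdry_2\stdsimp2\times(P\vee_BP)\big)=(\stdsimp2\times B)\cup(\horn20\times P_\mathrm{right})\cup(\horn21\times P_\mathrm{left})\cup(2\times W),\]
where $W=P\times_BP$, $P\vee_BP=P_\mathrm{right}\cup P_\mathrm{left}\subseteq W$ and $P_\mathrm{right}\cap P_\mathrm{left}=B$. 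Then I would show $E\cof\stdsimp2\times W$ is anodyne --- hence a weak homotopy equivalence --- by a skeletal induction on $W$ relative to $B$. Fix a filtration $B=W_0\subseteq W_1\subseteq\cdots$ in which $W_k$ is obtained from $W_{k-1}$ by attaching a single nondegenerate simplex $\sigma_k\notin B$ of dimension $n_k$, with the $n_k$ non-decreasing, and put $E_k:=E\cup(\stdsimp2\times W_k)$, so $E_0=E$ and $\bigcup_kE_k=\stdsimp2\times W$. Pulling $E$ back along $\id\times\sigma_k$, and using that $\sigma_k$ is contained in none of the subcomplexes $B\subseteq P_\mathrm{right},P_\mathrm{left}\subseteq W$ that it meets (so their preimages are proper, hence lie in $\partial\stdsimp{n_k}$), a short inspection of the four summands of $E$ shows that the preimage of $E_{k-1}$ equals $(L\times\stdsimp{n_k})\cup(\stdsimp2\times\partial\stdsimp{n_k})$, with $L=\horn20$, $\horn21$ or $2$ according as $\sigma_k$ lies in $P_\mathrm{right}$, in $P_\mathrm{left}$, or in neither (these cases exhausting all possibilities since $\sigma_k\notin B=P_\mathrm{right}\cap P_\mathrm{left}$). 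Hence $E_{k-1}\cof E_k$ is a pushout of $(L\times\stdsimp{n_k})\cup(\stdsimp2\times\partial\stdsimp{n_k})\cof\stdsimp2\times\stdsimp{n_k}$, the pushout--product of $L\cof\stdsimp2$ with $\partial\stdsimp{n_k}\cof\stdsimp{n_k}$. Since $\horn20\cof\stdsimp2$ and $\horn21\cof\stdsimp2$ are horn inclusions and $2\cof\stdsimp2$ (the last vertex) is a monomorphism that is a weak equivalence, all three maps $L\cof\stdsimp2$ are trivial cofibrations; by the pushout--product axiom so is each $E_{k-1}\cof E_k$, and therefore so is the transfinite composite $E\cof\stdsimp2\times W$. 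The step I expect to be the real obstacle is precisely this last bookkeeping --- checking that the preimage of $E_{k-1}$ under $\id\times\sigma_k$ is exactly a pushout--product domain; the crux is that $\sigma_k$, lying in none of $B$, $P_\mathrm{right}$, $P_\mathrm{left}$, has proper preimages of all of them inside $\stdsimp{n_k}$, which collapses the several defining clauses of $E$ into the single condition ``$x\in\partial\stdsimp{n_k}$ or $a\in L$''. Everything else is routine model-categorical formalities.
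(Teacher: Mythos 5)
Your proof is correct, but it takes a genuinely different route from the paper's on the last (and hardest) claim. The paper disposes of all three statements with two deformation retractions of geometric realizations: $\bdry_2\stdsimp2\times(P\vee_BP)$ deforms onto $P\hvee_BP$ (first claim, after projecting away the contractible $\bdry_2\stdsimp2$), while $\stdsimp2\times(P\times_BP)$ and $P\htimes_BP$ both deform onto $2\times(P\times_BP)$ (second claim); the third claim then follows with no further work by gluing the first deformation onto $P\htimes_BP$ and applying two-out-of-three along $2\times(P\times_BP)\subseteq P\htimes_BP\subseteq E\subseteq\stdsimp2\times(P\times_BP)$. Your treatment of the middle map is essentially the paper's second deformation made simplicial via the last-vertex map $\max(j,2i)$ --- a mild strengthening. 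For $P\hvee_BP\to P\vee_BP$ you collapse the mapping cylinder and invoke the gluing lemma where the paper retracts the ambient $\bdry_2\stdsimp2\times(P\vee_BP)$ onto $P\hvee_BP$; both are fine. For the third claim your skeletal induction is correct --- the identification of the preimage of $E_{k-1}$ as $(L\times\stdsimp{n_k})\cup(\stdsimp2\times\partial\stdsimp{n_k})$ with $L\in\{\horn20,\horn21,2\}$, using that $\sigma_k\notin B=P_\mathrm{right}\cap P_\mathrm{left}$ makes the three cases exhaustive, is exactly right --- and it buys the stronger conclusion that the inclusion is anodyne, at the cost of substantially more bookkeeping than the paper's two-line reduction. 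One point you should make explicit: in this section the paper's ``weak homotopy equivalence'' means a $G$-homotopy equivalence after realization, and your arguments for the second and third claims produce non-equivariant weak equivalences a priori; either attach $G$-orbits of cells at a time (the pushout--product computation is unchanged) or note that all actions in sight are free, so the equivariant Whitehead theorem supplies the upgrade.
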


\begin{proof}
The space $P\hvee_BP$ is naturally a subspace of $\bdry_2\stdsimp2\times(P\vee_BP)$ and it is enough to show that it is in fact a deformation retract. A continuous deformation is obtained from a deformation of $\bdry_2\stdsimp2\times P_\mathrm{right}$ onto $(0\times P_\mathrm{right})\cup(\bdry_2\stdsimp2\times B)$ and a symmetric deformation of $\bdry_2\stdsimp2\times P_\mathrm{left}$ onto $(1\times P_\mathrm{left})\cup(\bdry_2\stdsimp2\times B)$.

To prove the remaining claims, consider the deformation of $\stdsimp2\times(P\times_BP)$ onto $2\times(P\times_BP)$, given by deforming $\stdsimp2$ linearly onto $2$ and by a constant homotopy at identity on the second component $P\times_BP$. By an easy inspection, it restricts to a deformation of $P\htimes_BP$ onto $2\times(P\times_BP)$, giving the second claim.

Since both $\stdsimp2\times(P\times_BP)$, $P\htimes_BP$ deform onto the same $2\times(P\times_BP)$, it is enough for the last claim to find a deformation of
\[(P\htimes_BP)\cup\big(\bdry_2\stdsimp{2}\times(P\vee_BP)\big)\]
onto $P\htimes_BP$. This is provided by the deformation of $\bdry_2\stdsimp2\times(P\vee_BP)$ onto $P\hvee_BP$ (the intersection of the two spaces in the union above) from the first paragraph.
\end{proof}

Now we are ready to prove the following proposition.

\addtocounter{equation}{-1}
{\renewcommand{\theequation}{\ref{prop:weak_H_space_connect} (restatement)}
\begin{proposition}
\wHsc
\end{proposition}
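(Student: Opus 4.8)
The plan is to reduce the statement to the classical fact that $(F\times F,F\vee F)$ is $(2\theconn+1)$-connected for a $\theconn$-connected space $F$, propagate this along the fibrations over $B$, and finally pass to equivariant cohomology. Write $P=\Pnew$. The first move is to invoke Lemma~\ref{l:whe_weak_strict}: the natural maps $P\hvee_BP\to P\vee_BP$ and $P\htimes_BP\to P\times_BP$ are weak homotopy equivalences compatible with the inclusions $\vartheta$ and $P\vee_BP\cof P\times_BP$, so it suffices to prove that the (non-equivariant) pair $(P\times_BP,P\vee_BP)$ is $(2\theconn+1)$-connected; the cohomology vanishing will then be deduced from this.

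Next I would pin down the fibre. The structure map $\psin\col P\to B$ is a Kan fibration, being a composite of the $p_\thedimm$, with fibre $F$. Using the Moore--Postnikov axioms together with the five lemma applied to the long exact sequences of $\hofibre(Y\to B)\to Y\to B$ and $F\to P\to B$ (the map between them induced by $\varphin$, which is an isomorphism on $\pi_\thedimm$ for $\thedimm\le\thedim$ by MP1), one gets $\pi_\thedimm F\cong\pi_\thedimm(\hofibre(Y\to B))$ for $\thedimm\le\thedim$, while MP2 gives $\pi_\thedimm F=0$ for $\thedimm>\thedim$. Since $\hofibre(Y\to B)$ is $\theconn$-connected by hypothesis, $F$ is $\theconn$-connected for every $\thedim$ (and weakly contractible if $\thedim<\theconn$). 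Now $P\times_BP\to B$ and $P\vee_BP\to B$ are Kan fibrations whose fibres over any point are $F\times F$ and $F\vee F$, and $P\vee_BP\cof P\times_BP$ restricts fibrewise to the standard inclusion. The classical input is that $(F\times F)/(F\vee F)=F\wedge F$ is $(2\theconn+1)$-connected: by the reduced K\"unneth formula $\widetilde H_k(F\wedge F)=0$ for $k\le2\theconn+1$, and $F\wedge F$ is simply connected, so by the relative Hurewicz theorem $\pi_k(F\times F,F\vee F)=0$ for $k\le2\theconn+1$. Combining this with the standard principle that a map of fibrations over $B$ which is a $(2\theconn+1)$-equivalence on fibres is a $(2\theconn+1)$-equivalence (five lemma applied to the two fibration long exact sequences, the $\pi_*B$-summands matching up) gives $\pi_k(P\times_BP,P\vee_BP)=0$ for $k\le2\theconn+1$.

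For the cohomological conclusion I would argue as follows. Since the pair $(P\times_BP,P\vee_BP)$ is $(2\theconn+1)$-connected and both spaces are simply connected, relative Hurewicz gives $H_k(P\times_BP,P\vee_BP;\Z)=0$ for $k\le2\theconn+1$. The diagonal $G$-action on $P\times_BP$ is free (a fixed point would map via $\psin$ to a fixed point in the free $G$-set $B$) and $P\vee_BP$ is an invariant subcomplex, so the relative normalized chain complex $C_*(P\times_BP,P\vee_BP)$ is a bounded-below complex of free $\ZG$-modules with homology vanishing in degrees $\le2\theconn+1$; such a complex is $\ZG$-chain homotopy equivalent to one concentrated in degrees $>2\theconn+1$ (split off the acyclic part using projectivity of the terms), whence $H^k_G(P\times_BP,P\vee_BP;\pi)=0$ for $k\le2\theconn+1$, and the same holds for $(P\htimes_BP,P\hvee_BP)$ by the weak equivalences of the first paragraph.

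I expect the genuinely delicate points to be bookkeeping rather than conceptual: verifying that $\hofibre(\Pnew\to B)$ — as opposed to $\hofibre(Y\to B)$ — remains $\theconn$-connected in all cases (including small $\thedim$), and the last step, where passing from a connected pair of spaces to vanishing equivariant cohomology requires the homological-algebra observation that a bounded-below complex of projectives which is acyclic in low degrees truncates. Everything else is a direct application of Lemma~\ref{l:whe_weak_strict}, the K\"unneth formula, relative Hurewicz, and the five lemma.
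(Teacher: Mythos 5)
Your argument follows the paper's proof almost step for step: reduce to the pair $(\Pnew\times_B\Pnew,\Pnew\vee_B\Pnew)$ via Lemma~\ref{l:whe_weak_strict}, compare the two fibre sequences over $B$ with fibres $F\times F$ and $F\vee F$, feed in the classical $(2\theconn+1)$-connectivity of $(F\times F,F\vee F)$, apply the five lemma, and then obtain the equivariant cohomology vanishing by a partial contraction of the relative chain complex of free $\ZG$-modules followed by $\operatorname{Hom}_{\ZG}(-,\pi)$. All of that is correct and is exactly what the paper does (your truncation of a bounded-below complex of projectives is the same homological algebra as the paper's appeal to the contraction from Proposition~\ref{prop:projectivity}).

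The one step you assert without justification is that $P\vee_BP\to B$ is a Kan fibration with fibre $F\vee F$. This is not automatic: $P\vee_BP$ is the union of two copies of $P$ glued along the zero section, and restricting a Kan fibration to a subcomplex does not in general yield a Kan fibration; indeed $F\vee F$ need not even be a Kan complex, so the literal claim is doubtful. Yet your five-lemma argument needs the long exact homotopy sequence for $P\vee_BP\to B$. The paper closes this gap by observing that $\Pnew\to B$ is a \emph{minimal} fibration (a composite of pullbacks of the maps $\delta\col E(\pi_\thedimm,\thedimm)\to K(\pi_\thedimm,\thedimm+1)$), hence locally trivial over each simplex of $B$, with trivializations that can be chosen to respect the zero section; consequently $\Pnew\vee_B\Pnew\to B$ is a fibre bundle with fibre $F\vee F$, which after geometric realization supplies the required long exact sequence. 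With that justification inserted, your proof is complete.
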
}

\begin{proof}
By the first part of the previous lemma, we may replace the pair in the statement by $(\Pnew\times_B\Pnew,\Pnew\vee_B\Pnew)$.

First, we recall that $\Pnew\ra B$ is a minimal fibration (each $\delta\col E(\pi_\thedimm,\thedimm)\ra K(\pi_\thedimm,\thedimm+1)$ is one and the class of minimal fibrations is closed under pullbacks and compositions, see \cite{May:SimplicialObjects-1992}). It is well known that over each simplex $\sigma\col \stdsimp{\thedimm}\ra B$ any minimal fibration is trivial and it is easy to modify this to an isomorphism $\sigma^*\Pnew\cong\stdsimp{\thedimm}\times F$ of fibrations with sections, where $F$ denotes the fibre of $\Pnew\ra B$ and is $\theconn$-connected by the assumptions.\footnote{%
	Start with an inclusion $(\stdsimp{\thedimm}\times*)\cup(0\times F)\ra\sigma^*\Pnew$ given by the zero section on the first summand and by the inclusion on the second. Extend this to a fibrewise map $\stdsimp{\thedimm}\times F\ra\sigma^*\Pnew$ which is a fibrewise homotopy equivalence, hence an isomorphism, by the minimality of $\Pnew\ra B$.
}. Consequently, $\Pnew\vee_B\Pnew$ is a fibre bundle with fibre $F\vee F$. Thus, we have a map of fibre sequences
\[\xymatrix{
F\vee F \ar[r] \ar[d] & \Pnew\vee_B\Pnew \ar[r] \ar[d] & B \ar@{=}[d] \\
F\times F \ar[r] & \Pnew\times_B\Pnew \ar[r] & B
}\]
The left map is $(2\theconn+1)$-connected. By the five lemma applied to the long exact sequences of homotopy groups, the middle map $\Pnew\vee_B\Pnew\ra \Pnew\times_B\Pnew$ is also $(2\theconn+1)$-connected.

To show that the equivariant cohomology groups vanish, we make use of a contraction of $C_*(\Pnew\htimes_B\Pnew)$ onto $C_*(\Pnew\hvee_B\Pnew)$ in dimensions $\leq2\theconn+1$; its existence follows from the proof of Proposition~\ref{prop:projectivity}. By the additivity of $\operatorname{Hom}_{\ZG}(-,\pi)$, there is an induced contraction of $C^*_G(\Pnew\htimes_B\Pnew;\pi)$ onto $C^*_G(\Pnew\hvee_B\Pnew;\pi)$ and thus the relative cochain complex is acyclic.
\end{proof}

\addtocounter{equation}{-1}
{\renewcommand{\theequation}{\ref{t:existence_of_H_space_structures} (restatement)}
\begin{theorem}
\eoHss
\end{theorem}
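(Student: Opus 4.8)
The plan is to reduce the whole statement to ordinary (equivariant, fibrewise) obstruction theory along the Moore--Postnikov tower of $\Pnew\to B$. Recall that this map is a \emph{minimal} Kan fibration (each $\delta\col E(\pi_i,i)\to K(\pi_i,i+1)$ is minimal, and minimality is inherited by pullbacks and composites), with fibre $F$ the $n$-th Moore--Postnikov stage of the homotopy fibre of $\varphi$; hence $F$ is $d$-connected with homotopy concentrated in degrees $d+1,\dots,n$, and the tower of $\Pnew\to B$ is an iterated principal fibration with fibres $K(\pi_i,i)$, $\pi_i=\pi_i(F)$, $i\le n$. The key tool will be the following extension principle, obtained by iterating Lemma~\ref{l:lift_ext_one_stage} up the tower: \emph{if $(A,A')$ is a cellular pair over $B$ that is $(2d+1)$-connected, then every fibrewise map $A'\to\Pnew$ extends over $A$.} Indeed, a lift over the $i$-th stage exists (given one over the previous stage) whenever $H^{i+1}_G(A,A';\pi_i)=0$, and these groups vanish for $i\le n$ since $i+1\le n+1\le 2d+1$: the relative chain complex $C_*(A,A')$ consists of free $\ZG$-modules with vanishing homology through dimension $2d+1$, so by the inductive construction in the proof of Proposition~\ref{prop:projectivity} (which needs no finiteness for existence alone) it admits a $\ZG$-linear chain contraction through that dimension, whence the relative equivariant cochain complex is acyclic there.

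Next I would assemble the connectivity estimates. Because $\Pnew\to B$ is minimal, it is a fibre bundle, and so are $\Pnew\times_B\Pnew$ (fibre $F\times F$), $\Pnew\vee_B\Pnew$ (fibre $F\vee F$), $\Pnew\times_B\Pnew\times_B\Pnew$ (fibre $F^{\times 3}$), and the fibrewise fat wedge $T\subseteq\Pnew\times_B\Pnew\times_B\Pnew$ of triples with at least one coordinate on the zero section (fibre $T(F)$, the fat wedge of $F$). Arguing with the long exact sequences of homotopy groups and the five lemma exactly as in the proof of Proposition~\ref{prop:weak_H_space_connect}, the relative connectivity of each of these pairs equals that of the corresponding pair of fibres; since $(F\times F)/(F\vee F)=F\wedge F$ and $F^{\times 3}/T(F)=F^{\wedge 3}$ and $F$ is $d$-connected, the pairs $(\Pnew\times_B\Pnew,\Pnew\vee_B\Pnew)$ and $(\Pnew^{\times_B3},T)$ are $(2d+1)$- and $(3d+2)$-connected. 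Finally the ``cylinder pairs'' $\bigl(\stdsimp1\times W,(\partial\stdsimp1\times W)\cup(\stdsimp1\times W')\bigr)$ built from these are one degree more connected still (a standard mapping-cylinder/suspension argument). All the pairs in sight are therefore at least $(2d+1)$-connected, so the extension principle applies to each of them.

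With these in hand the three assertions are short. \emph{Existence of $\add$}: the fold map $\Pnew\vee_B\Pnew\to\Pnew$ extends, by the extension principle, to a fibrewise $\add\col\Pnew\times_B\Pnew\to\Pnew$, which is an \Hopf space structure. \emph{Homotopy commutativity} of an arbitrary $\add$: $(x,y)\mapsto x+y$ and $(x,y)\mapsto y+x$ agree on $\Pnew\vee_B\Pnew$ (both equal the fold), so together with the constant homotopy on $\stdsimp1\times(\Pnew\vee_B\Pnew)$ they define a fibrewise map on $(\partial\stdsimp1\times(\Pnew\times_B\Pnew))\cup(\stdsimp1\times(\Pnew\vee_B\Pnew))$; extending it over $\stdsimp1\times(\Pnew\times_B\Pnew)$ yields the commutativity homotopy, constant at $x=y=o$ by construction. \emph{Homotopy associativity}: by the unit axiom $(x+y)+z$ and $x+(y+z)$ agree on $T$ (setting any coordinate to $o$ gives the sum of the other two), so the analogous extension over $\stdsimp1\times\Pnew^{\times_B3}$ gives the associativity homotopy rel $x=y=z=o$.

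For the \emph{homotopy inverse} I would use a shear-map argument, since ``any such structure'' precludes an inductive construction. The fibrewise shear map $\mathrm{sh}\col\Pnew\times_B\Pnew\to\Pnew\times_B\Pnew$, $(x,y)\mapsto(x,x+y)$, fixes the zero section and, on each fibre $F\times F$, induces on homotopy groups the automorphism with matrix $\bigl(\begin{smallmatrix}1&0\\1&1\end{smallmatrix}\bigr)$ (using that an \Hopf multiplication induces the group addition on $\pi_*$), hence is a fibrewise weak equivalence; by the sectioned fibrewise Whitehead theorem it is then a fibrewise homotopy equivalence preserving the zero section, with a homotopy inverse $\mathrm{sh}'$ of the same kind. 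Setting $\inv(x)\defeq\pr_2\,\mathrm{sh}'(x,o)$ gives $\inv(o)=o$, and unwinding $\mathrm{sh}\,\mathrm{sh}'\simeq\id$ (rel the zero section) produces $\pr_1\mathrm{sh}'(x,o)\simeq x$ and $\pr_1\mathrm{sh}'(x,o)+\inv(x)\simeq o$, which concatenate to a homotopy $x+\inv(x)\simeq o$ constant at $x=o$. I expect this last step --- extracting a genuine right homotopy inverse for an \emph{arbitrary} \Hopf structure with the necessary section control --- to be the most delicate point; everything else is routine once the connectivity bookkeeping is set up.
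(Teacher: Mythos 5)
Your proof is correct and, for the existence of $\add$ and for homotopy commutativity and associativity, it follows essentially the paper's own argument: both rest on the $(2\theconn+1)$-connectivity of the relevant fibrewise pairs (obtained, as you do it, from minimality/local triviality of $\Pnew\to B$ plus the five lemma, exactly as in Proposition~\ref{prop:weak_H_space_connect}) and on the vanishing of the equivariant obstruction groups $H^{\thedimm+1}_G(-,-;\pi_\thedimm)$ for $\thedimm+1\leq 2\theconn+1$, deduced from a $\ZG$-linear chain contraction as in the proof of Proposition~\ref{prop:projectivity}. The only cosmetic differences there are that the paper phrases commutativity and associativity as uniqueness of the diagonal relative to the subspace rather than extending a homotopy over a cylinder, and that it uses the smaller subspace $(B\times_B\Pnew\times_B\Pnew)\cup(\Pnew\times_B\Pnew\times_BB)$ where you use the full fat wedge; both suffice, since only $(2\theconn+1)$-connectivity is needed. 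The genuine divergence is the homotopy inverse: the paper simply invokes a fibrewise, equivariant version of Stasheff's theorem and notes that an explicit inverse is constructed anyway in Section~\ref{sec:H_space_constr} for the weak structure actually used, whereas you give a shear-map argument: $\mathrm{sh}(x,y)=(x,x+y)$ is a fibrewise weak equivalence because on each fibre the multiplication (having a strict unit) induces addition on homotopy groups --- note this needs no associativity, so it applies to an arbitrary \Hopf structure as required --- and then a sectioned, fibrewise, equivariant Whitehead theorem (legitimate here: the actions are free, the zero sections are cofibrations, the projections are Kan fibrations) yields an inverse $\mathrm{sh}'$ from which $-x=\pr_2\mathrm{sh}'(x,o)$ is extracted; your bookkeeping of the section-preserving homotopies (transporting $\pr_1\mathrm{sh}'(x,o)+(-x)\sim o$ along $\pr_1\mathrm{sh}'(x,o)\sim x$ before concatenating, all constant at $x=o$) is sound. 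Your route buys a more self-contained substitute for the citation to Stasheff, at the price of invoking pointed fibrewise Whitehead/model-category machinery that the paper deliberately sidesteps.
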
}\label{sec:existence_of_H_space_structures_proof}

\begin{proof}
By the previous proposition, the left vertical map in
\[\xymatrix@C=40pt{
\Pnew\vee_B\Pnew \ar[r]^-\nabla \ar@{ >->}[d]_\vartheta & \Pnew \ar@{->>}[d]^-{\psin} \\
\Pnew\times_B\Pnew \ar[r] \ar@{-->}[ru]_-\add & B
}\]
is $(2\theconn+1)$-connected. Since the homotopy groups of the fibre of $\psin$ are concentrated in dimensions $\theconn\leq\thedimm\leq\thedim$, the relevant obstructions (they can be extracted from the proof of Proposition~\ref{prop:lift_ext_one_stage}) for the existence of the diagonal lie in
\[H^{\thedimm+1}_G(\Pnew\times_B\Pnew,\Pnew\vee_B\Pnew)=0\]
(since $i+1\leq\thedim+1\leq 2\theconn+1$). The diagonal is unique up to homotopy by the very same computation. Thus, in particular, replacing $\add$ by the opposite addition $\add^\op\col(x,y)\mapsto y+x$ yields a homotopic map, proving homotopy commutativity. Similarly, homotopy associativity follows from the uniqueness of a diagonal in
\[\xymatrix@C=40pt{
(B\times_B\Pnew\times_B\Pnew)\cup(\Pnew\times_B\Pnew\times_BB) \ar[r] \ar@{ >->}[d] & \Pnew \ar@{->>}[d]^-{\psin} \\
\Pnew\times_B\Pnew\times_B\Pnew \ar[r] \ar@{-->}[ru] & B
}\]
(the pair on the left is again $(2\theconn+1)$-connected) with two diagonals specified by mapping $(x,y,z)$ to $(x+y)+z$ and $x+(y+z)$.

The existence of a homotopy inverse is a fibrewise and equivariant version of \cite[Theorem~3.4]{Stasheff}; the proof applies without any complications when the action of $G$ is free. We will not provide more details since we construct the inverse directly in Section~\ref{sec:H_space_constr}.
\end{proof}

For the next proof, we will use a general lemma about filtered chain complexes. Let $C_*$ be a chain complex equipped with a filtration
\[0 = F_{-1}C_* \subseteq F_0C_* \subseteq F_1C_* \subseteq \cdots\]
such that $C_* = \bigcup_i F_iC_* $. As usual, we assume that each $F_iC_*$ is a $\ZG$-cellular subcomplex, i.e.\ generated by a subset of the given basis of $C_*$. We assume that this filtration is \emph{locally finite}, i.e.\ for each $n$, we have $C_n = F_iC_n$ for some $i \geq 0$. For the relative version, let $D_*$ be a ($\ZG$-cellular) subcomplex of $C_*$ and define $F_iD_* = D_* \cap F_iC_*$.

\begin{lemma}\label{l:filt}
Under the above assumptions, if each filtration quotient $G_iC_* = F_iC_*/F_{i-1}C_*$ has effective homology then so does $C_*$. More generally, if each $(G_iC_*,G_iD_*)$ has effective homology then so does $(C_*,D_*)$.
\end{lemma}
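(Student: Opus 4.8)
The plan is to prove this by induction on the filtration degree, combined with a limiting argument. Fix a dimension $n$; by local finiteness, $C_n = F_iC_n$ for $i$ large enough, so the filtration stabilizes in each dimension, and an effective chain complex structure on $C_*$ amounts to being able to enumerate a $\ZG$-basis in each dimension (which we have, since $F_iC_n$ is effective when $G_jC_*$ is for all $j$) together with a strong equivalence to an effective complex carrying computable homology. The inductive step is the heart of the matter: supposing $F_{i-1}C_*$ has been equipped with effective homology, I would build effective homology for $F_iC_*$ out of that of $F_{i-1}C_*$ and of the quotient $G_iC_* = F_iC_*/F_{i-1}C_*$. This is exactly the situation handled by a perturbation-lemma argument: there is a short exact sequence of chain complexes $0 \to F_{i-1}C_* \to F_iC_* \to G_iC_* \to 0$ of free $\ZG$-modules, which is (non-canonically, but computably, since everything is cellular) split as graded $\ZG$-modules, so that $F_iC_*$ is isomorphic as a graded module to $F_{i-1}C_* \oplus G_iC_*$ with a differential that is the direct-sum differential perturbed by a connecting term $\delta$ raising filtration by one step. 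Taking the direct sum of the strong equivalences $F_{i-1}C_* \LRa (F_{i-1}C_*)^\ef$ and $G_iC_* \LRa (G_iC_*)^\ef$ gives a strong equivalence from the unperturbed $F_{i-1}C_* \oplus G_iC_*$ to an effective complex, and then the Basic Perturbation Lemma (Proposition~\ref{p:bpl}, in its absolute form) applies: the nilpotence hypothesis $(\eta\delta)^\nu(c) = 0$ holds because $\delta$ strictly raises the filtration while $\eta$ preserves it, so $\eta\delta$ is filtration-increasing and hence nilpotent on each element (local finiteness again). This yields effective homology for $F_iC_*$.

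For the relative version, I would run the same induction but using strong equivalences of pairs throughout: the pair $(F_iC_*, F_iD_*)$ sits in a short exact sequence with $(F_{i-1}C_*, F_{i-1}D_*)$ and $(G_iC_*, G_iD_*)$, the splitting can be chosen compatibly with $D_*$ (again because $F_iD_* = D_* \cap F_iC_*$ is cellular), and the relative Basic Perturbation Lemma produces a reduction of pairs; the nilpotence argument is unchanged since the contracting homotopy of a reduction of pairs preserves the subcomplex. One subtlety is that effective homology for $C_*$ itself is obtained not at a single finite stage but as a ``limit'': in each dimension $n$ the filtration stabilizes, so for computing the homology up to dimension $N$ it suffices to work with $F_iC_*$ for $i$ large enough that $F_iC_n = C_n$ for all $n \le N+1$; the strong equivalence to an effective complex can be taken to agree with the one for $F_iC_*$ in this range, and since the constructions are compatible under increasing $i$ this assembles into a well-defined effective homology for $C_*$. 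This is where local finiteness is essential — without it one could not reduce a dimension-$n$ computation to a finite stage.

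The main obstacle I anticipate is purely bookkeeping rather than conceptual: verifying that the strong equivalences produced at successive stages are genuinely compatible, so that the ``limit'' effective-homology datum for $C_*$ is well-defined and the relevant lists of basis elements and the differentials can be generated by a single algorithm. One has to be a little careful that composing strong equivalences (and applying the perturbation lemma) at stage $i$ does not disturb what was constructed at stage $i-1$ in low dimensions; since all the maps involved respect the filtration, this holds, but it needs to be said. A second minor point is to note that the hypotheses of Proposition~\ref{p:bpl} are met with the trivial subcomplex in the absolute case and with $D_*$ in the relative case, and that $\delta$ does preserve $D_*$ — this follows from $F_iD_* = D_* \cap F_iC_*$ together with the compatible choice of splitting. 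Apart from these routine checks, the argument is a direct application of the machinery already set up in Section~\ref{sec:equi_eff_hlgy_alg}.
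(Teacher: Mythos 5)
Your argument is correct and uses the same machinery as the paper (direct sums of the given strong equivalences plus the perturbation lemmas, with nilpotence coming from filtration bookkeeping), but it is organized differently, and the difference matters for how much checking is left over. The paper does not induct over the filtration at all: it forms the whole associated graded complex $G_*=\bigoplus_{i\ge 0}G_iC_*$ at once, observes that $C_*$ is obtained from $G_*$ by a single perturbation that strictly \emph{decreases} the filtration degree (note your connecting term goes from the quotient summand into the subcomplex summand, so it lowers rather than raises the index -- harmless for nilpotence, since in your two-term splitting one even has $(\eta\delta)^2=0$), takes the direct sum of all the equivalences $G_iC_*\La\widehat G_iC_*\Ra G_i^\ef C_*$ in one step, and applies Propositions~\ref{p:epl} and~\ref{p:bpl} once to get $C_*\La\widehat C_*\Ra C_*^\ef$; the relative statement is absorbed by doing everything with pairs, exactly as you indicate. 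This one-shot formulation buys exactly what you identify as your main obstacle: there is no tower of successively perturbed equivalences, hence no compatibility or limiting argument to verify. In your stagewise version that assembly step does go through -- the perturbation used at stage $i$ vanishes on the $F_{i-1}$-summand (and on the corresponding hat- and effective summands), so the perturbation-lemma series collapse to the identity there and the stage-$i$ output genuinely restricts to the stage-$(i-1)$ data -- but this is an extra verification, and the effectiveness of your limiting complex rests on the same point where the paper invokes local finiteness, namely that in each fixed dimension only finitely many of the effective summands $G_i^\ef C_*$ contribute to the basis. So: same key lemmas, a finer (two-term, iterated) decomposition on your side versus the global associated-graded decomposition in the paper, with the paper's choice trading your induction-plus-colimit bookkeeping for a single application of the perturbation lemmas.
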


\begin{proof}
We define $G_* = \bigoplus_{i \geq 0} G_iC_*$, the associated graded chain complex. Then $C_*$ is obtained from $G_*$ via a perturbation that decreases the filtration degree $i$. Taking a direct sum of the given strong equivalences $G_iC_* \La \widehat G_iC_* \Ra G_i^\ef C_*$, we obtain a strong equivalence $G_* \La \widehat G_* \Ra G_*^\ef$ with all the involved chain complexes equipped with a ``filtration'' degree. Since the perturbation on $G_*$ decreases this degree, while the homotopy operator preserves it, we may apply the perturbation lemmas, Propositions~\ref{p:epl} and~\ref{p:bpl}, to obtain a strong equivalence $C_* \La \widehat C_* \Ra C_*^\ef$.
\end{proof}

\addtocounter{equation}{-1}
{\renewcommand{\theequation}{\ref{p:effective_homotopy_colimits} (restatement)}
\begin{proposition}
\ehc
\end{proposition}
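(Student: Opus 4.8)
The plan is to reduce the statement to Lemma~\ref{l:filt} by equipping $C_*|\mcS|$ with a finite (hence locally finite) filtration, compatible with the subcomplex $D_*\defeq C_*(\bdry_2|\mcS|)$, whose associated graded pieces are direct sums of the complexes $C_*Z,C_*Z_0,C_*Z_1,C_*Z_2$ and relative chain complexes of products of pairs $(\stdsimp k,\partial\stdsimp k)\times(Z_i,\emptyset)$. As in the paper's description of $|\mcS|$, I would assume the four maps of $\mcS$ are inclusions (the general case reduces to this by replacing the $Z_i$ with mapping cylinders, the easier variant alluded to in the footnote, which preserves effective homology and changes neither $|\mcS|$ nor $\bdry_2|\mcS|$ up to isomorphism); then $|\mcS|$ is the stated subcomplex of $\stdsimp2\times Z_2$, and with $\pr\colon|\mcS|\to\stdsimp2$ the projection one has $\bdry_2|\mcS|=\pr^{-1}(\bdry_2\stdsimp2)$.

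\textbf{The filtration.} I would set $F_{-1}C_*=0$ and
\begin{align*}
F_0C_*&=C_*\bigl((0\times Z_0)\sqcup(1\times Z_1)\bigr), & F_1C_*&=C_*(\bdry_2|\mcS|), & F_2C_*&=F_1C_*+C_*(2\times Z_2),\\
F_3C_*&=F_2C_*+C_*(\bdry_1\stdsimp2\times Z_0), & F_4C_*&=F_3C_*+C_*(\bdry_0\stdsimp2\times Z_1), & F_5C_*&=C_*|\mcS|,
\end{align*}
each $F_iC_*$ being $C_*$ of an equivariant cellular simplicial subset of $|\mcS|$, and $F_iD_*=D_*\cap F_iC_*$ as in Lemma~\ref{l:filt}; since $D_*=F_1C_*$ this gives $F_iD_*=F_iC_*$ for $i\le1$ and $F_iD_*=D_*$ for $i\ge1$. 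Running through the stages, the graded pieces should come out as $G_0C_*\cong C_*Z_0\oplus C_*Z_1$; $G_1C_*\cong C_*(\stdsimp1\times Z,\partial\stdsimp1\times Z)$ (via the inclusion $\bdry_2\stdsimp2\times Z\hookrightarrow\bdry_2|\mcS|$); $G_2C_*\cong C_*Z_2$ (because $2\times Z_2$ is disjoint from $\bdry_2|\mcS|=\pr^{-1}(\bdry_2\stdsimp2)$); $G_3C_*\cong C_*(\stdsimp1\times Z_0,\partial\stdsimp1\times Z_0)$ and $G_4C_*\cong C_*(\stdsimp1\times Z_1,\partial\stdsimp1\times Z_1)$, because each newly attached edge-piece $\bdry_k\stdsimp2\times Z_i$ meets the part already built exactly along its full boundary $\partial\stdsimp1\times Z_i$ (here one uses $Z\subseteq Z_0,Z_1$ and $Z_0,Z_1\subseteq Z_2$, and the fact that $2\times Z_2$ was inserted first); and $G_5C_*\cong C_*(\stdsimp2\times Z,\partial\stdsimp2\times Z)$. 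On the subcomplex side, $G_iD_*=G_iC_*$ for $i=0,1$ and $G_iD_*=0$ for $i\ge2$.

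\textbf{Conclusion.} It then remains to note that each $(G_iC_*,G_iD_*)$ has effective homology. The complexes $C_*Z_0,C_*Z_1,C_*Z_2$ do so by hypothesis; each $C_*((\stdsimp k,\partial\stdsimp k)\times(Z_i,\emptyset))$ does because $(\stdsimp k,\partial\stdsimp k)$ is finite and $(Z_i,\emptyset)$ has effective homology, so Proposition~\ref{prop:relative_product} equips the pair $(\stdsimp k\times Z_i,\partial\stdsimp k\times Z_i)$ with effective homology, which then passes to its quotient chain complex; and a pair of the form $(C_*,C_*)$ or $(C_*,0)$ inherits effective homology directly from $C_*$. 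All the identifications above are $G$-equivariant, so Lemma~\ref{l:filt} applies verbatim in the equivariant setting and yields effective homology for $(C_*|\mcS|,D_*)=(C_*|\mcS|,C_*\bdry_2|\mcS|)$, which is exactly the assertion.

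\textbf{Main obstacle.} I expect the only genuine work to be the bookkeeping hidden in the middle paragraph: at each stage one must verify that the newly adjoined product $\stdsimp k\times Z_i$ is glued along \emph{precisely} $\partial\stdsimp k\times Z_i$, so that the subquotient really is the relative complex of a product of pairs to which Proposition~\ref{prop:relative_product} applies. This hinges on the commutativity of $\mcS$ together with the inclusion relations among $Z,Z_0,Z_1,Z_2$ and on keeping careful track of which face of $\stdsimp2$ carries which $Z_i$; it is elementary but fiddly. The order of the stages is not forced — inserting $2\times Z_2$ before the two edge-pieces is what keeps every attachment along a full boundary — and the reduction to the inclusion case could be bypassed by working with the colimit presentation of $|\mcS|$ directly, at the cost of heavier notation.
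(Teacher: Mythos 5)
Your proof is correct and takes essentially the same route as the paper's: the paper applies Lemma~\ref{l:filt} to the coarser skeletal filtration $F_iC_*|\mcS|=C_*\sk_i|\mcS|$ pulled back from $\stdsimp2$ (three stages whose graded pieces are exactly the direct sums of yours) and identifies those pieces via the Eilenberg--Zilber reduction with suspensions $s^kC_*^\ef Z_i$, which is what your appeal to Proposition~\ref{prop:relative_product} plus passage to quotients amounts to. The only cosmetic caveat is that $(\stdsimp{k},\partial\stdsimp{k})$ carries no free $G$-action, so strictly one should invoke the functorial (hence equivariant through the $Z_i$ factor) Eilenberg--Zilber reduction directly, as the paper does, rather than Proposition~\ref{prop:relative_product} verbatim.
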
}\label{sec:effective_homotopy_colimits}

We continue the notation of Section~\ref{s:hocolim}.

\begin{proof}
We apply Lemma~\ref{l:filt} to the natural filtration $F_iC_*|\mcS| = C_*\sk_i|\mcS|$, where $\sk_i|\mcS|$ is the preimage of the $i$-skeleton $\sk_i\stdsimp 2$ under the natural projection $|\mcS| \to \stdsimp 2$. The Eilenberg--Zilber reduction applies to the quotient
\[C_*\sk_2|\mcS|/C_*\sk_1|\mcS| \cong C_*(\stdsimp 2 \times Z, \partial\stdsimp 2 \times Z) \Ra C_*(\stdsimp 2,\partial\stdsimp 2) \otimes C_*Z \cong s^2C_*Z\]
where $s$ denotes the suspension. The effective homology of $Z$ provides a further strong equivalence with $s^2C_*^\ef Z$. Similarly, $C_*\sk_1|\mcS|/C_*\sk_0|\mcS|$ is isomorphic to
\[C_*((\bdry_2\stdsimp 2,\partial\bdry_2\stdsimp 2) \times Z) \oplus C_*((\bdry_1\stdsimp 2,\partial\bdry_1\stdsimp 2) \times Z_0) \oplus C_*((\bdry_0\stdsimp 2,\partial\bdry_0\stdsimp 2) \times Z_1)\]
and thus strongly equivalent to $sC_*^\ef Z \oplus sC_*^\ef Z_0 \oplus sC_*^\ef Z_1$. Finally, $C_*\sk_0|\mcS|$ is strongly equivalent to $C_*^\ef Z_0 \oplus C_*^\ef Z_1 \oplus C_*^\ef Z_2$.

The subcomplexes corresponding to $\bdry_2|\mcS|$ are formed by some of the direct summands above and are thus preserved by all the involved strong equivalences. This finishes the verification of the assumptions of Lemma~\ref{l:filt}.
\end{proof}

The following lemma was used in the proof of Proposition~\ref{prop:weak_H_space_structure}.

\begin{lemma}\label{lem:calculations}
The two components $\pn\mathop{\add}$ and $\qn\mathop{\add}$ defined in the proof of Proposition~\ref{prop:weak_H_space_structure} determine a map $\mathop{\add}\col\Pnew\htimes_B\Pnew\ra\Pnew$ and this map is a weak \Hopf space structure.

The two components $\pn\mathop{\inv}$ and $\qn\mathop{\inv}$ defined in Section~\ref{sec:H_space_constr} determine a map $\mathop{\inv}\col\Pnew\ra\Pnew$ and this map is a right inverse for $\add$.
\end{lemma}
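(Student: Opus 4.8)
The plan is to exploit that $\Pnew=\Pold\times_\Kn\En$ is a pullback: a map into $\Pnew$ is the same as a pair of maps into $\Pold$ and $\En$ whose composites with $\kn$ and with $\delta$ agree. Accordingly the lemma splits into four routine diagram chases: (i) the prescribed components $\pn\add$ and $\qn\add$ are compatible in this sense, so that $\add$ is defined; (ii) $\add\circ\vartheta=\hnabla$; (iii) the two components of $\inv$ are compatible, so that $\inv$ is defined; and (iv) $\add\big((x,c),-(x,c)\big)=\onew$. Throughout I would use functoriality of $\htimes_B$ and $\hvee_B$ and naturality of $\vartheta$ and $\hnabla$ with respect to fibrewise maps preserving the chosen zero sections ($\onew$ on $\Pnew$, $\oold=\pn\onew$ on $\Pold$, $\qn\onew$ on $\En$), together with the following data from Section~\ref{sec:H_space_constr}: the identities $\delta M=m$ and $M\vartheta=0$ coming from the lifting-extension problem~\eqref{eq:lift_homotopy_additivity}; the defining equation $m=\kn\circ\add-\add_{\kno}\circ(\kn\htimes\kn)$ of the non-additivity map; the relations $\delta\circ\qn=\kn\circ\pn$ and $\delta\circ\qn\onew=\kno$ from~\eqref{eq:consecutive_stages}; and the inductive hypothesis that $\Pold$ already carries a weak \Hopf space structure with a strict right inverse.

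For (i) I would compute $\delta\circ\qn\add$. Since $\delta\col\En\to\Kn$ is a fibrewise homomorphism of simplicial abelian groups taking $\qn\onew$ to $\kno$, it intertwines $\add_{\qn\onew}$ with $\add_{\kno}$; using this, $\delta M=m$, and $\delta\qn=\kn\pn$, one gets $\delta\circ\qn\add=\big(\add_{\kno}\circ(\kn\htimes\kn)+m\big)\circ(\pn\htimes\pn)=\kn\circ\add\circ(\pn\htimes\pn)$, where the last step is the defining equation of $m$. By the definition of the first component, $\kn\circ\add\circ(\pn\htimes\pn)=\kn\circ\pn\add$, so the two components glue to a map $\add\col\Pnew\htimes_B\Pnew\to\Pnew$.

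For (ii) I would verify $\add\vartheta=\hnabla$ after post-composing with $\pn$ and with $\qn$ separately. On the $\pn$-side, naturality of $\vartheta$ rewrites $\pn\add\,\vartheta=\add_{\Pold}(\pn\htimes\pn)\vartheta$ as $\add_{\Pold}\vartheta(\pn\hvee\pn)$, which equals $\hnabla_{\Pold}(\pn\hvee\pn)=\pn\hnabla$ by the inductive weak \Hopf axiom on $\Pold$ and naturality of $\hnabla$. On the $\qn$-side, the $M$-summand of $\qn\add$ vanishes after restriction along $\vartheta$ because $M\vartheta=0$, while the $\add_{\qn\onew}$-summand restricts to $\hnabla_{\En}(\qn\hvee\qn)=\qn\hnabla$ since $\add_{\qn\onew}$ is the (strict, hence weak) \Hopf structure on $\En$ with zero $\qn\onew$; adding the zero section of $\En$ changes nothing, so $\qn\add\,\vartheta=\qn\hnabla$, and thus $\add\vartheta=\hnabla$.

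For (iii) and (iv) the one extra observation is that over the vertex $2$ of $\stdsimp2$ the weak addition on $\Pold$ coincides with the genuine addition, for which $x+(-x)=\oold$ by the inductive hypothesis; hence $m(2,x,-x)=\kn(x+(-x))-\big(\kn x+_{\kno}\kn(-x)\big)=2\kno-\kn x-\kn(-x)$. Substituting this into $\delta$ of the second component of $-(x,c)=(-x,-c+2\qn\onew-M(2,x,-x))$ yields precisely $\kn(-x)$, so $\inv$ lands in the pullback; and evaluating $\add$ on $\big((x,c),-(x,c)\big)$ over the vertex $2$ gives $x+(-x)=\oold$ in the first component and a telescoping cancellation equal to $\qn\onew$ in the second, i.e.\ $\add\big((x,c),-(x,c)\big)=\onew$. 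I expect no genuine obstacle here; the only place where a careless argument could go wrong is the bookkeeping of the two zero sections carried by each of $\En$ and $\Kn$ — the honest zero $0$ (into which $M$, being a diagonal of~\eqref{eq:lift_homotopy_additivity}, maps, and with respect to which $\delta$ is a homomorphism) versus the shifted ones $\qn\onew$ and $\kno=\delta\qn\onew$ that govern $\add_{\qn\onew}$, $\add_{\kno}$ and the section $\onew$ of the new stage — and making sure the cancellations in (i), (iii) and (iv) use the right one each time.
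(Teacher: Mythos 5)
Your proposal is correct and is essentially the paper's own proof: the same four componentwise verifications in the pullback $\Pnew=\Pold\times_{\Kn}\En$, using $\delta M=m$, $M\vartheta=0$, the defining formula for $m$, $\delta\qn=\kn\pn$, naturality of $\vartheta$ and $\hnabla$ with respect to the section-preserving maps $\pn$ and $\qn$, and the inductive identity $x+(-x)=\oold$ on $\Pold$. The explicit cancellations you indicate (e.g.\ $m(2,x,-x)=2\kno-\kn x-\kn(-x)$ and the telescoping of the second component to $\qn\onew$) are exactly the computations written out in the paper's proof of this lemma.
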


\begin{proof}
The compatibility for $\add$:
\begin{align*}
\delta\qn\mathop{\add} & =\delta\big(\mathop{\add_{\qn\onew}}(\qn\htimes\qn)+M(\pn\htimes\pn)\big)=\mathop{\add_{\kno}}(\delta\htimes\delta)(\qn\htimes\qn)+m(\pn\htimes\pn) \\
& =\mathop{\add_{\kno}}(\kn\htimes \kn)(\pn\htimes\pn)+\big(\kn\mathop{\add}-\mathop{\add_{\kno}}(\kn\htimes \kn)\big)(\pn\htimes\pn) \\
& =\kn\mathop{\add}(\pn\htimes\pn)=\kn\pn\mathop{\add}
\end{align*}
The weak \Hopf space condition $\mathop{\add}\vartheta=\hnabla$ on $\Pnew$ verified for its two components:
\begin{align*}
\pn\mathop{\add}\vartheta & =\mathop{\add}(\pn\htimes\pn)\vartheta=\mathop{\add}\vartheta(\pn\hvee\pn)=\hnabla(\pn\hvee\pn)=\pn\hnabla \\
\qn\mathop{\add}\vartheta & =\big(\mathop{\add_{\qn\onew}}(\qn\htimes\qn)+M(\pn\htimes\pn)\big)\vartheta=\mathop{\add_{\qn\onew}}\vartheta(\qn\hvee\qn)+\underbrace{M\vartheta}_0(\pn\hvee\pn) \\
& =\hnabla(\qn\hvee\qn)=\qn\hnabla
\end{align*}
The compatibility for $\inv$:
\begin{align*}
\delta(-c+2\qn\onew&-M(2,x,-x))=-\delta c+2\delta\qn\onew-m(2,x,-x) \\
& =-\kn x+2\kno-(\kn(\underbrace{x+(-x)}_{\oold})-\kn x+\kno-\kn(-x))=\kn(-x)
\end{align*}
The condition $\mathop{\add}(\id\htimes\mathop{\inv})\hDelta=\onew$ of being a right inverse:
\begin{align*}
(x,c)+(-(x,c)) & =(x,c)+(-x,-c+2\qn\onew-M(2,x,-x)) \\
& =(x+(-x),c+(-c+2\qn\onew-M(2,x,-x))-\qn\onew+M(2,x,-x)) \\
& =(\oold,\qn\onew)=\onew\qedhere
\end{align*}
\end{proof}

\ifpoly
\section{Polynomiality}\label{sec:polynomiality}

\subsection*{Basic notions}

The algorithm of Theorem~\ref{thm:main_theorem} was described for a single generalized lifting-extension problem. To prove that its running time is polynomial, we will have to deal with the class of all generalized lifting-extension problems and also certain related classes, e.g.\ the class of Moore-Postnikov stages of a given height. We will base our analysis on the notion of a locally polynomial-time simplicial set, described in \cite{polypost}. Here, we will call it a polynomial-time family of simplicial sets.

Since we assume $\theconn$ to be fixed and our algorithms only access information up to dimension $2\theconn+2$, we make the following standing assumption.

\begin{convention}\label{conv:encoding}
In this section, when speaking about the running time of algorithms, it is understood that inputs are limited to dimension at most $\theDim$ for some fixed $\theDim$.

Simplicial sets will be equipped with a choice of encoding of their simplices; thus, from now on, different choices of encoding of simplices of one simplicial set actually specify \emph{different} simplicial sets. The same applies to chain complexes etc.
\end{convention}

Usually, a collection $(X(p))_{p \in \sfP}$ of simplicial sets is understood as a mapping $p \mapsto X(p)$, associating to each $p \in \sfP$ a simplicial set $X(p)$. For technical reasons, our collections will also permit multi-valued mappings, i.e.\ $X(p)$ in general is not a single simplicial set but any of a number of simplicial sets. In effect, this is given by a relation between simplicial sets and parameters $p \in \sfP$, namely: $Z \sim p$ if and only if $Z$ is one of the possible values $X(p)$.

\begin{definition}\label{def:family}
A \emph{family of (locally effective) simplicial sets} is a collection $(X(p))_{p\in\sfP}$ of simplicial sets (equipped with choices of encodings of their simplices), as above, such that the elements of the \emph{parameter set} $\sfP$ have a representation in a computer and such that there are provided algorithms, all taking as inputs pairs $(p,x)$ with $p\in\sfP$ and $x \in X(p)$, and performing the following tasks:
\begin{enumerate}[labelindent=.5em,leftmargin=*,label=$\bullet$,itemsep=0pt,parsep=0pt,topsep=5pt]
\item
compute the $i$-th face of $x$,
\item
compute the $i$-th degeneracy of $x$,
\item
compute the action of $a\in G$ on $x$,
\item
compute the expression of $x$ as $x=ay$ with $a\in G$ and $y$ distinguished.
\end{enumerate}
We say that this family is \emph{polynomial-time} if all these algorithms have their running time bounded by $g(\size(p)+\size(x))$, where $g$ is some polynomial function and $\size(p)$, $\size(x)$ are the encoding sizes of $p$ and $x$ (we recall the assumption $\dim x\leq \theDim$).

A \emph{family of effective simplicial sets} possesses, in addition, an algorithm that, given $p\in\sfP$, outputs the list of all non-degenerate distinguished simplices of $X(p)$. (For such a family, the simplicial set $X(p)$ is necessarily unique.)
\end{definition}

Now we are able to explain the non-uniqueness of a represented simplicial set $X(p)$ for a given parameter $p \in \sfP$: namely, for any simplicial subset $A \subseteq X(p)$, we may use the same parameter $p$ to compute faces etc.\ in $A$, which means that $A$ might also be used as a value $X(p)$ at $p$.

\begin{example}
In Section~\ref{sec:equi_eff_hlgy_alg}, we described a way of encoding finite simplicial sets by listing all distinguished non-degenerate simplices and also the relations $d_j x = a s_I y$, for all $x$ distinguished non-degenerate and all possible $j$. Such encodings comprise the parameter set $\SSet$; it then supports an obvious polynomial-time family of effective simplicial sets, whose simplices are encoded as formal expressions $a s_I y$.
\end{example}

The notion of a family can be similarly defined for pairs of (effective) simplicial sets, (effective) chain complexes, strong equivalences, simplicial sets with effective homology etc. Each such class $\mcC$ is described by a collection of algorithms that are required to specify its object, similar to the list in Definition~\ref{def:family}. Families of objects of $\mcC$ are then the obvious parametrized versions of such collections of algorithms; we will denote them $C \colon \sfP\family\mcC$.

When constructing new families of objects, it is important that the resulting families are polynomial-time whenever the old ones are. We will encapsulate this situation in the notion of a polynomial-time construction. A \emph{construction} $F\col\mcC\to\mcD$ is simply a mapping; we use a different name to emphasize that it operates on the level of objects, i.e.\ mathematical structures of some sort, and their encodings (see Convention~\ref{conv:encoding}). In general, we will not require $F$ to be single-valued, having in mind an example of associating to an equation its solution -- no solution needs to exist and if it does exist, there might be many choices.

\begin{example}
There is an obvious construction
\[\xymatrix{
\{\text{couples of locally effective simplicial sets}\} \ar[r] & \{\text{locally effective simplicial sets}\}
}\]
(a couple is general; a pair is a couple $(X, A)$ with $A \subseteq X$). There is also an obvious way of transforming a couple of locally effective simplicial sets $X$, $Y$ into a locally effective simplicial set $X \times Y$, e.g.\ the $\thedimm$-th face $d_\thedimm(x,y)=(d_\thedimm x,d_\thedimm y)$ may be easily computed with the help of the corresponding algorithms for $X$ and $Y$.
\end{example}

We say that the construction $F$ is \emph{computable}, if there is given a collection of algorithms, which are allowed to use formal calls to algorithms describing a computable object $Z \in \mcC$ (i.e.\ a family of objects parametrized by a 1-element $\sfP$), that describe its image $F(Z)$ for arbitrary computable $Z \in \mcC$. We have seen an example above for the product construction -- the algorithm for $d_i$ in $X \times Y$ uses calls to algorithms for $d_i$ in $X$ and $Y$.

Given a family $C\colon\sfP\family\mcC$, we may replace the formal calls by calls to actual algorithms present in the family $C$ and thus obtain a family $\sfP \family \mcD$; we denote the resulting family by $F_*C \colon \sfP\family[C]\mcC\xra{F}\mcD$. A computable construction is said to be \emph{polynomial-time} if, in this way, one obtains a polynomial-time family $F_*C$ for every polynomial-time family $C$.

\begin{remark}
Since, for each class $\mcC$, the number of the required algorithms is finite, we may consider the parameter set $\Alg(\mcC)$, whose elements are such collections of algorithms (non-parametrized, i.e.\ describing a single computable object). Further, we denote by $\Alg_g(\mcC)$ the collections of algorithms that run in time bounded by the polynomial $g$. Then $\Alg(\mcC)$ supports an obvious family $\Alg(\mcC)\family\mcC$ that assigns to each collection of algorithms an object they represent (there may be many) and the parametrized version of each algorithm simply runs the appropriate algorithm contained in the parameter. It restricts to a polynomial-time family parametrized by $\Alg_g(\mcC)$.

With this notation, a \emph{computable} construction $F$ is a family structure on the collection $\Alg(\mcC)\family\mcC\to\mcD$. Moreover, $F$ is \emph{polynomial-time} if, in addition, this family restricts to a polynomial-time family $\Alg_g(\mcC)\family\mcD$ for each polynomial $g$.
\end{remark}

The dual situation is called a reparametrization: when $\Phi\col\sfQ\to\sfP$ is a polynomial-time mapping and $\sfP$ supports a polynomial-time family $C\col\sfP\family\mcC$ then $\Phi^*C \colon \sfQ\xra{\Phi}\sfP\family[C]\mcC$ is another polynomial-time family.

The main result of this section is the following.

\begin{theorem}\label{thm:main_poly}
For each fixed $\theconn\geq 1$, the algorithm of Theorem~\ref{thm:main_theorem} describes a polynomial-time construction
\setlength{\hlp}{\widthof{${}\cup\{\emptyset\},$}*\real{0.5}}
\[\xymatrix@R=15pt{
\left\{\parbox{\widthof{\upshape $\theconn$-stable generalized lifting-extension problems}}{\upshape $\theconn$-stable generalized lifting-extension problems composed of effective simplicial sets}\right\} \ar[r] & *+!!<-\the\hlp,\the\fontdimen22\textfont2>{\left\{\parbox{\widthof{\upshape abelian groups}}{\upshape fully effective abelian groups}\right\}\cup\{\emptyset\},} \\
{} \POS*!<0pt,-13pt>\xybox{\xymatrix@=15pt{
\scriptstyle A \ar[r] \ar@{ >->}[d] & \scriptstyle Y \ar[d] \\
\scriptstyle X \ar[r] & \scriptstyle B
}} \ar@{|->}[r] & [X,Y]^A_B,
}\]
where the $\theconn$-stability of a generalized lifting-extension problem means that $\dim X\leq 2\theconn$, both $B$ and $Y$ are simply connected and the homotopy fibre of $\psi\col Y\to B$ is $\theconn$-connected.
\end{theorem}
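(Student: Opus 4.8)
The plan is to revisit the entire algorithm of Theorem~\ref{thm:main_theorem} as presented in Section~\ref{s:main_proofs} and to verify, step by step, that every construction involved is not merely algorithmic but \emph{polynomial-time} in the sense just introduced, provided $\theconn$ is fixed and inputs are restricted to dimension $\leq 2\theconn+2$. The backbone of the argument is the machinery of polynomial-time constructions and reparametrizations: once each elementary building block is shown to be a polynomial-time construction, the composite algorithm inherits polynomiality automatically, because polynomial-time constructions are closed under composition and the whole proof of Theorem~\ref{thm:main_theorem} is a finite composite of such blocks (there is no unbounded recursion once $\theconn$, and hence $\dim X\leq 2\theconn$, is fixed: the Moore--Postnikov tower is built only up to stage $\dim X$, a constant).

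First I would set up the relevant families: the parameter set of $\theconn$-stable generalized lifting-extension problems composed of effective simplicial sets supports a polynomial-time family (this is essentially the content of the encoding described in Section~\ref{sec:equi_eff_hlgy_alg}, together with \cite{polypost}), and likewise for effective chain complexes, strong equivalences, and simplicial sets with effective homology. Then I would go through the ingredients in order. \emph{(a) The Moore--Postnikov tower} (Theorem~\ref{t:MP_tower}): one must check that the cone construction, the transfer of effective homology through strong equivalences, the retraction of Proposition~\ref{prop:projectivity}, the Smith normal form computation of $\pin$, and the formation of the pullback $\Pnew=\Pold\times_{K(\pin,\thedim+1)}E(\pin,\thedim)$ are all polynomial-time constructions; crucially the effective homology of $K(\pin,\thedim)$ and of twisted cartesian products (\cite{polypost,Filakovsky}) and the equivariant transfer of Theorem~\ref{t:vokrinek} must be polynomial-time --- here fixedness of $G$ and $\theconn$ is essential, since the bar construction $BC_*^\ef$ is of bounded size in each fixed dimension. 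Since there are only $\dim X\leq 2\theconn$ stages, iterating a polynomial-time step a constant number of times stays polynomial. \emph{(b) Effective homology of products and homotopy colimits} (Propositions~\ref{prop:relative_product}, \ref{p:effective_homotopy_colimits}, and Corollary~\ref{c:weak_H_space_eff_hlgy}): the Eilenberg--Zilber reduction, the tensor product of reductions (\cite{SergerGenova}), and the perturbation lemmas (Propositions~\ref{p:epl}, \ref{p:bpl}) are all given by explicit formulas, hence polynomial-time constructions; the filtration argument of Lemma~\ref{l:filt} involves only a bounded number of filtration quotients in bounded dimensions. \emph{(c) The weak \Hopf space structure} (Section~\ref{s:weak_H_spaces}): constructing $M$ via Lemma~\ref{l:lift_ext_one_stage} amounts to solving a system of linear equations over $\ZG$ on a finitely generated effective complex, which is polynomial-time by Smith normal form; the assembly of $\add$ and $\inv$ from their components is a finite sequence of elementary operations. \emph{(d) The exact sequence and the passage from semi-effective to fully effective} (Theorem~\ref{thm:exact_sequence}, Lemmas~\ref{l:ker_coker}, \ref{l:ses}, \ref{l:fully_eff_cohlgy}): the cohomology computation of Lemma~\ref{l:fully_eff_cohlgy} is again Smith normal form on a bounded-dimensional effective complex; the kernel/cokernel and short-exact-sequence manipulations of \cite{CKMSVW11} are linear algebra over $\bbZ$. \emph{(e) The inductive computation of nullhomotopies and of an initial element} in Section~\ref{sec:description} and Section~\ref{s:main_proof}: each step invokes Proposition~\ref{prop:homotopy_lifting}, Proposition~\ref{prop:homotopy_concatenation}, and Proposition~\ref{prop:lift_ext_one_stage}, all reducible to the polynomial-time Lemma~\ref{l:lift_ext_one_stage}, and the induction has only $O(\theconn)$ steps.

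The main obstacle I anticipate is a subtlety hidden in Section~\ref{s:main_proof}: if at stage $\Pold$ one has chosen a zero section that does \emph{not} lift to $\Pnew$, the algorithm must backtrack, change the section of $\Pold$, and recompute from scratch. One has to argue that this backtracking happens at most a bounded (in fact, at most one) number of times per stage, or more precisely that the decision ``does the chosen $\ell_0\colon X\to\Pold$ lift?'' together with the production of a new liftable $\ell$ is itself a single polynomial-time step: this is exactly the content of the ``finding an element'' paragraph, where liftability is reduced to membership of $[\delta\widetilde c-\kno]$ in the image of the homomorphism $\knst\colon[X,\Pold]^A_B\to[X,\Kn]^A_B$, a condition one checks by computing the (finitely many) generators of $[X,\Pold]^A_B$ and their images --- a linear-algebra problem over $\bbZ$. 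So the apparent ``recompute everything'' is in fact folded into a single pass, and no genuine exponential blow-up occurs. The remaining care is bookkeeping: one must confirm that the \emph{sizes} of all intermediate objects (the stages $P_\thedimm$ restricted to dimension $\leq 2\theconn+2$, the modules $\pin$, the effective complexes $C_*^\ef$) stay polynomially bounded in the input size, so that the polynomial bounds $g$ attached to the families compose without degradation; this is where the restriction $\dim x\leq 2\theconn+2$ in the standing convention does the work, since it caps the dimension in which anything is ever examined.

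I would then conclude by observing that the composite of all these polynomial-time constructions, applied to the polynomial-time family of $\theconn$-stable generalized lifting-extension problems, yields a polynomial-time family valued in $\{$fully effective abelian groups$\}\cup\{\emptyset\}$, which is precisely the assertion of Theorem~\ref{thm:main_poly}. The polynomiality claims in Theorems~\ref{theorem:equivariant} and~\ref{t:emb-metast} then follow by reparametrization along the polynomial-time constructions that produce, respectively, the lifting-extension problem $[X,Y]^\emptyset_{EG}$ from a pair $(X,Y)$ of finite $G$-simplicial sets, and the deleted-product problem from a finite simplicial complex $K$ with $k\leq\frac23 n-1$.
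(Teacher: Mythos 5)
Your proposal is correct and follows essentially the same route as the paper's own proof: fix $\theconn$, cap all work at dimension $2\theconn+2$, check that each ingredient (the Moore--Postnikov stages via \cite{polypost} and Theorem~\ref{t:vokrinek}, Lemma~\ref{l:lift_ext_one_stage}, the weak \Hopf space data, the exact sequence with the fully effective promotion, and the once-per-stage recomputation of liftable sections) is a polynomial-time construction, and compose these over the constantly many stages. The only difference is one of formalization: the paper discharges your ``bookkeeping'' concern by introducing explicit parameter sets that store the intermediate infinite objects as finite data (effective Postnikov invariants and effective addition cochains), organizing the algorithm as a chain of polynomial-time functions between these parameter sets together with polynomial-time families over them.
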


From the definition, we are required to set up a polynomial-time family indexed by $\Alg(\mcC)$ where $\mcC$ is the class of generalized lifting-extension problems in question. We will make use of restricted parameter sets $\Map$ and $\Pair$ that describe $\psi$ and $(X,A)$ respectively.

The whole computation is summarized in the following chains of computable functions between parameter sets that describe various partial stages of the computation; we will explain all the involved parameter sets later. The functions
\[\xymatrix@R=2pt{
\rightbox{\Map={}}{\EMPS0} \ar@{.>}[rr] \POS[];[rr]**\dir{}?<>(.33)**\dir{-}*\dir{>},[];[rr];**\dir{}?<>(.33)**\dir{-} & & \EMPS\thedim & \Map\times_\SSet\MPS\thedim \ar[r] & \MPS\thedim,
}\]
for $\thedim=\dim X$, describe the computation of the Moore--Postnikov system over $B$ and its pullback to $X$,
\[\xymatrix@R=2pt{
\Pair\times_\SSet\HMPS[\theotherdim-1]\thedim \ar[r] & \PMPS[\theotherdim]\thedim\cup\{\bot\} & \PMPS[\theotherdim]\thedim \ar[r] & \HMPS[\theotherdim]\thedim,
}\]
for $\theotherdim\leq\thedim$, describe the computation of the weak \Hopf space structure on the stable part of the pullback (when it admits a section at all) and
\[\xymatrix@R=2pt{
\Gamma_{\thedim}\col\Pair\times_\SSet\HMPS[\thedim]\thedim \ar@{~>}[r] & \{\text{fully effective abelian groups}\}
}\]
describes a polynomial-time family, given by the homotopy classes of sections of the final $\thedim$-th stage that are zero on $A$.

\subsection*{Moore--Postnikov systems}

The elements of the parameter set $\EMPS\thedim$ encode extended Moore--Postnikov systems and are composed of the following data
\begin{enumerate}[labelindent=.5em,leftmargin=*,label=$\bullet$,itemsep=0pt,parsep=0pt,topsep=5pt]
\item
finite simply connected simplicial sets $Y$, $B$;
\item
finitely generated abelian groups $\pi_1,\ldots,\pi_\thedim$;
\item
effective Postnikov invariants $\konepef,\ldots,\knpef$ (to be explained below);
\item
a simplicial map $\varphi_\thedim\col Y\to P_\thedim$;
\end{enumerate}
where we set, by induction, $P_0=B$ and $P_\thedimm=P_{\thedimm-1}\times_{K(\pi_\thedimm,\thedimm+1)}E(\pi_\thedimm,\thedimm)$, a pullback taken with respect to the Postnikov invariant $\kip\col P_{\thedimm-1}\to K(\pi_\thedimm,\thedimm+1)$ that corresponds to the equivariant cocycle
\[\xymatrix{
C_{\thedimm+1}P_{\thedimm-1} \ar[r] & C_{\thedimm+1}^\ef P_{\thedimm-1} \ar[r]^-{\kipef} & \pi_\thedimm
}\]
with the first map the obvious one coming from the effective homology of $P_{\thedimm-1}$. Thus, $\kipef$ is required to be an equivariant cocycle as indicated.\footnote{When $Y$ is not finite, $\varphi_\thedim$ has to be replaced by a certain collection of effective cochains on $Y$; details are explained in \cite{polypost}.}

The above simplicial sets provide a number of families
\[\xymatrix{
B,P_1,\ldots,P_\thedim,Y\col\EMPS\thedim \ar@{~>}[r] & \left\{\parbox{\widthof{simplicial sets with}}{simplicial sets with effective homology}\right\}
}\]
and also a number of families of simplicial maps $p_\thedimm$, $\kip$, $\varphi_\thedimm$ etc.\ between these. They are polynomial-time essentially by the results of \cite{polypost} -- there is only one significant difference, namely the (equivariant) polynomial-time homology of Moore--Postnikov stages. For that we need the following observation: the functor $B$ of Theorem~\ref{t:vokrinek} is a polynomial-time construction defined on
\[\left\{\parbox{\widthof{strong equivalences $C\LRa C^\ef$ with $C$ locally}}{strong equivalences $C\LRa C^\ef$ with $C$ locally effective over $\ZG$, $C^\ef$ effective over $\bbZ$}\right\}\]
and taking values in a similar class with everything $\ZG$-linear. Its polynomiality is guaranteed by the explicit nature of this functor, see~\cite{Vokrinek}.

Polynomiality of functions $\EMPS{\thedimm-1}\to\EMPS\thedimm$ is proved in the same way as in \cite{polypost} with the exception of the use of Proposition~\ref{prop:projectivity} that describes a polynomial-time construction
\[\xymatrix@R=2pt{
\left\{\parbox{\widthof{$n$-connected effective}}{$n$-connected effective chain complexes}\right\} \ar[r] & \left\{\parbox{\widthof{homomorphisms of effective}}{homomorphisms of effective abelian groups}\right\}, \\
C \ar@{|->}[r] & (C_{n+1}\to Z(C_{n+1})).
}\]

Parameters for a Moore--Postnikov system are comprised of the same data with the exception of $Y$ and $\varphi_\thedimm$; we denote their collection by $\MPS\thedim$. The parameters for the pullback $g^*S$ of a Moore--Postnikov system $S$ of $\psi\col Y\to B$ along $g\col \tB\to B$ are: the base is $\tB$, the homotopy groups remain the same and the Postnikov invariants are pulled back along $\tB\times_BP_\thedimm\to P_\thedimm$. Thus, the pullback function $\Map\times_\SSet\EMPS\thedim\to\MPS\thedim$, $(g,S)\mapsto g^*S$ is polynomial-time (it is defined whenever the target of $g$ agrees with the base of $S$).

\subsection*{Stable Moore--Postnikov systems}

For the subsequent developement, the most important ingredient is Lemma~\ref{l:lift_ext_one_stage}. It is easy to see that it is a polynomial-time construction
\[\xymatrix{
\left\{\parbox{\widthof{$(X,A)$ equipped with effective homology, $\pi$ fully}}{$(X,A)$ equipped with effective homology, $\pi$ fully effective abelian group, $z\col X\to K(\pi,\thedim+1)$, $c\col A\to E(\pi,\thedim)$ computable such that $\delta c=z|_A$}\right\} \ar[r] & \left\{\parbox{\widthof{$X\to E(\pi,\thedim)$}}{$X\to E(\pi,\thedim)$ computable}\right\}\cup\{\bot\}.
}\]
It will be useful to split this construction into two steps:  finding an ``effective'' cochain $c_0^\ef\col C_\thedim^\ef(X,A)\to\pi$ and computing from it the solution $\widetilde c+c_0$. The advantage of this splitting lies in the possibility of storing the effective cochain as a parameter.

We enhance the parameter set $\MPS\thedim$ to $\PMPS[\theotherdim]\thedim$ by including the parameter
\begin{enumerate}[labelindent=.5em,leftmargin=*,label=$\bullet$,itemsep=0pt,parsep=0pt,topsep=5pt]
\item
a simplicial map $\om\col B\to P_\theotherdim$;
\end{enumerate}
and to $\HMPS[\theotherdim]\thedim$ by including in addition the parameters
\begin{enumerate}[labelindent=.5em,leftmargin=*,label=$\bullet$,itemsep=0pt,parsep=0pt,topsep=5pt]
\item
equivariant effective cochains $M_\thedimm^\ef\col C_\thedimm^\ef(P_{\thedimm-1}\htimes_B P_{\thedimm-1},P_{\thedimm-1}\hvee_B P_{\thedimm-1})\to\pi_\thedimm$, $1\leq\thedimm\leq\theotherdim$;
\end{enumerate}
which give the zero section and the addition in the Moore--Postnikov stages; for the latter, we use the observation above.

There are polynomial-time functions
\[\xymatrix{
\PMPS[\theotherdim]\thedim \ar[r] & \HMPS[\theotherdim]\thedim
}\]
which compute inductively the equivariant cochains $M_\thedimm$, $1\leq\thedimm\leq\theotherdim$, using Lemma~\ref{l:lift_ext_one_stage}.

\subsection*{Computing diagonals}

We describe a number of polynomial-time families supported by $\HMPS{}$ and its relatives. We restrict our attention to the pullback Moore--Postnikov system $\widetilde S$ over $X$ whose stages will be denoted $\tPnew$. Proposition~\ref{prop:addition_on_homotopy_classes}, that uses the polynomial-times addition in the Moore--Postnikov system and a polynomial-time construction of Proposition~\ref{prop:homotopy_lifting}, gives a polynomial-time family
\[\xymatrix@R=3pt{
\Gamma_{\thedim,\theotherdim}\col\Pair\times_\SSet\HMPS[\theotherdim]\thedim \ar@{~>}[r] & \{\text{semi-effective abelian groups}\} \\
((X,A),\widetilde S) \ar@{|->}[r] & [X,\widetilde P_\theotherdim]^A_X
}\]
(defined whenever the bigger space $X$ of the pair $(X,A)$ agrees with the base of $\widetilde S$) which is then extended to a polynomial-time family of semi-effective exact sequences from Theorem~\ref{thm:exact_sequence_long}. We assume, by induction, that $\Gamma_{\thedim,\theotherdim-1}$ has been already promoted to a polynomial-time family of fully effective abelian groups. The ``five lemma'' for fully effective structures, Lemma~\ref{l:ses}, provides a polynomial-time construction
\[\xymatrix{
\left\{\parbox{\widthof{semi-effective exact sequences}}{semi-effective exact sequences $A\to B\to C\to D\to E$ with $A$, $B$, $D$, $E$ fully effective}\right\} \ar[r] & \{\text{fully effective abelian groups}\}
}\]
sending each exact sequence to its middle term $C$. Thus, $\Gamma_{\thedim,\theotherdim}$ is enhanced to a polynomial-time family of fully effective abelian groups.

\subsection*{Computing zero sections}

It remains to analyze the function
\[\xymatrix{
\Pair\times_\SSet\HMPS[\theotherdim-1]\thedim \ar[r] & \PMPS[\theotherdim]\thedim\cup\{\bot\}.
}\]
By Theorem~\ref{thm:exact_sequence_short}, we obtain a polynomial-time family of affine homorphisms
\[\xymatrix{
\kmst\col[X,\widetilde P_{\theotherdim-1}]^A_X \ar[r] & H^{\theotherdim+1}_G(X,A;\pi_\theotherdim)
}\]
between fully effective abelian groups, parametrized by $\Pair\times_\SSet\HMPS[\theotherdim-1]\thedim$. Since Lemma~\ref{l:preimage} describes a polynomial-time construction, we obtain a section $\ommo$ that lifts to $\widetilde P_\theotherdim$ in polynomial time; this lift $\om$ is also computed in polynomial time using Proposition~\ref{prop:lift_ext_one_stage}.
\fi

\subsection*{Acknowledgement}
We are grateful to Ji\v{r}\'{i} Matou\v{s}ek and Uli Wagner for many useful discussions, comments and suggestions that improved this paper a great deal. Moreover, this paper could hardly exist without our long-term collaboration, partly summarized in \cite{CKMSVW11}, \cite{polypost} and \cite{ext-hard}.
	
\bibliographystyle{plain}
\bibliography{Postnikov}

\vskip 20pt
\vfill
\vbox{\footnotesize%
\noindent\begin{minipage}[t]{0.45\textwidth}
{\scshape Martin \v{C}adek, Luk\'a\v{s} Vok\v{r}\'inek}\\
Department of Mathematics and Statistics,\\
Masaryk University,\\
Kotl\'a\v{r}sk\'a~2, 611~37~Brno,\\
Czech Republic
\end{minipage}
\hfill
\begin{minipage}[t]{0.45\textwidth}
{\scshape Marek Kr\v{c}\'al}\\
Institute of Science and Technology Austria,\\
Am Campus 1, 3400~Klosterneuburg,\\
Austria
\end{minipage}
}

\end{document}